\newcommand{\subjclass}[2][2010]{%
  \let\@oldtitle\@title%
  \gdef\@title{\@oldtitle\footnotetext{#1 \emph{Mathematics subject classification.} #2}}%
}
\newcommand{\ep}{\epsilon}
\newcommand{\eps}{\epsilon}
\newcommand{\leqc}{\lesssim}
\newcommand{\geqc}{\gtrsim}
\newcommand{\grad}{\nabla}
\newcommand{\strat}{\circ}
\newcommand{\lnorm}[1]{\| #1 \|}
\newcommand{\norm}[1]{\left\| #1 \right\|}
\newcommand{\abs}[1]{\left| #1 \right|}
\newcommand{\set}[1]{\left\{ #1 \right\}}
\newcommand{\brak}[1]{\left\langle #1 \right\rangle} 
\newcommand{\R}{\mathbb{R}}
\newcommand{\N}{\mathbb{N}}
\newcommand{\Z}{\mathbb{Z}}
\renewcommand{\S}{\mathbb{S}}
\newcommand{\tensor}{\otimes}
\newcommand{\loc}{\mathrm{loc}}
\newcommand{\cL}{\mathcal{L}}
\newcommand{\cJ}{\mathcal{J}}
\newcommand{\cM}{\mathcal{M}}
\newcommand{\xbf}{\mathbf{x}}
\newcommand{\dee}{\mathrm{d}}
\newcommand{\ds}{\dee s}
\newcommand{\dt}{\dee t}
\newcommand{\dx}{\dee x}
\newcommand{\dy}{\dee y}
\newcommand{\dq}{\dee q}
\newcommand{\ad}{\textup{ad}}
\DeclareMathOperator{\Div}{\mathrm{div}}
\DeclareMathOperator{\tr}{\mathrm{tr}}
\DeclareMathOperator{\Span}{\mathrm{span}}
\renewcommand{\P}{\mathbf{P}}
\newcommand{\E}{\mathbf{E}}
\newcommand{\EE}{\mathbf E}
\newcommand{\PP}{\mathbf P}
\newcommand{\Leb}{\operatorname{Leb}}
\newtheorem{theoremAlpha}{Theorem}
\newtheorem{corollaryAlpha}[theoremAlpha]{Corollary}
\newtheorem{theorem}{Theorem}[section]
\newtheorem{proposition}[theorem]{Proposition}
\newtheorem{corollary}[theorem]{Corollary}
\newtheorem{lemma}[theorem]{Lemma}
\newtheorem*{lemma*}{Lemma}
\newtheorem{assumption}{Assumption}
\theoremstyle{definition}
\newtheorem{definition}[theorem]{Definition}
\newtheorem{remark}[theorem]{Remark}
\numberwithin{equation}{section}
\begin{document}

\title{A regularity method for lower bounds on the Lyapunov exponent for stochastic differential equations} 
\subjclass{Primary: 37H15, 35H10. Secondary: 37D25, 58J65, 35B65}
\author{Jacob Bedrossian\thanks{\footnotesize Department of Mathematics, University of Maryland, College Park, MD 20742, USA \href{mailto:jacob@math.umd.edu}{\texttt{jacob@math.umd.edu}}. J.B. was supported by National Science Foundation CAREER grant DMS-1552826 and National Science Foundation RNMS \#1107444} \and Alex Blumenthal\thanks{\footnotesize School of Mathematics, Georgia Institute of Technology, Atlanta, GA 30332, USA \href{mailto:ablumenthal6@gatech.edu}{\texttt{ablumenthal6@gatech.edu}}. A.B. was supported by National Science Foundation grant DMS-2009431} \and Sam Punshon-Smith\thanks{\footnotesize Division of Applied Mathematics,  Brown University, Providence, RI 02906, USA \href{mailto:punshs@brown.edu}{\texttt{punshs@brown.edu}}. This material was based upon work supported by the National Science Foundation under Award No. DMS-1803481.}}

\maketitle

\begin{abstract}
We put forward a new method for obtaining quantitative lower bounds on the top Lyapunov exponent of stochastic differential equations (SDEs). Our method
combines (i) a new identity connecting the top Lyapunov exponent to a Fisher information-like functional of the stationary density of the Markov process tracking tangent directions with (ii) a novel, quantitative version of H\"ormander's hypoelliptic regularity theory in an $L^1$ framework which estimates this (degenerate) Fisher information from below by a $W^{s,1}_{\mathrm{loc}}$ Sobolev norm. This method is applicable to a wide range of systems beyond the reach of currently existing mathematically rigorous methods. As an initial application, we prove the positivity of the top Lyapunov exponent for a class of weakly-dissipative, weakly forced stochastic differential equations; in this paper we prove that this class includes the Lorenz 96 model in any dimension, provided the additive stochastic driving is applied to any consecutive pair of modes. 
\end{abstract}

\setcounter{tocdepth}{1}
{\small\tableofcontents}

\section{Introduction}\label{sec:Intro}

Many nonlinear systems of physical origin exhibit chaotic behavior. Although there is no single mathematical definition of `chaos', a commonly observed and studied aspect of chaotic behavior is \emph{sensitivity with respect to the initial conditions}. One way to quantify this sensitivity is the \emph{Lyapunov exponent} along a given trajectory, i.e., the asymptotic exponential rate at which nearby trajectories converge (a negative exponent) or diverge (a positive exponent, implying sensitivity). 
Despite the ubiquity of chaotic behavior in systems of physical interest, and in contrast with the rather well-developed abstract theory for the description of chaotic states and associated statistical properties, it is notoriously challenging to verify, for a given system, whether or not a positive Lyapunov exponent is present along a large (e.g., positive-volume) subset of phase space. 

The purpose of this paper is to put forward a method for providing quantitative lower bounds for the Lyapunov exponents of weakly-damped, weakly-driven stochastic differential equations. Our method combines two new ingredients: 
\begin{itemize}
\item[(i)] an apparently new identity (Theorem \ref{thm:fishInfoIntro}) connecting the largest Lyapunov exponent to a certain degenerate Fisher information functional on the stationary density of the Markov process tracking tangent directions; and 
\item[(ii)] a quantitative hypoellipticity argument in $L^1$ for showing that this Fisher information uniformly controls local fractional Sobolev regularity of the tangent-direction stationary density (Theorem \ref{thm:unifHypoRegEstIntro}), implying by (i) a quantitative lower bound on the Lyapunov exponent in terms of the local regularity of this stationary density.
\end{itemize} 
As a first application of our methods, we prove positivity of the largest Lyapunov exponent for a broad class of weakly-dissipative weakly-driven SDE on $\R^n$ (with fluctuation dissipation scaling) with bilinear drift term that conserves both volume and the norm on $\R^n$ (Theorem \ref{thm:critForEulerLikeIntro}) . This result incorporates several interesting models of fundamental interest, such as the Lorenz-96 (L96) model (Corollary \ref{L96}) and Galerkin truncations of the Navier-Stokes equation (GNSE) on a periodic box (supplemented by a recent and separate work \cite{BPS21} of the first and third author; see the beginning of Section \ref{sec:Conc}). 
These results constitute the first mathematically rigorous results affirming a positive Lyapunov exponent for either of these models, even in the stochastic case. 

Below in Section \ref{subsec:nontechIntro} we give an informal, nontechnical overview of the main results (Theorems \ref{thm:fishInfoIntro} -- \ref{thm:critForEulerLikeIntro} and Corollary \ref{L96}), followed
by a brief discussion of context within prior work (Section \ref{Sec:Context}) and a discussion of the future outlook for the ideas introduced in this paper (Section \ref{sec:Conc}). See Section \ref{subsec:planIntro} for a brief outline of the rest of the paper. 

\subsection{Overview of results}\label{subsec:nontechIntro}

Although several of our most general results (Theorem \ref{thm:fishInfoIntro} and Theorem \ref{thm:unifHypoRegEstIntro}) apply to SDEs posed on essentially any (potentially non-compact) geodesically complete manifold $M$, in order to simplify the exposition and to fix ideas, here we will consider just SDEs posed on $\R^n$ (already sufficient for Theorem \ref{thm:critForEulerLikeIntro} and Corollary \ref{L96}). See the main body of the paper for more general statements. 

Let $n \geq 1$, and consider the stochastic process $(x_t)_{t \geq 0}$ in $\R^n$ defined by 
the Stratonovich SDE
\begin{equation}\label{eq:general-SDE}
	\dee x_t = X_0(x_t)\,\dt + \sum_{k=1}^r X_k(x_t)\strat\dee W^k_t \, ,
\end{equation}
where $\{X_k\}_{k=0}^r$ are a family of smooth vector fields on $\R^n$ and $\{W^k\}_{k=1}^r$ are independent standard Wiener processes.

Under mild conditions , trajectories of the Markov process $(x_t)$ are realized by an associated stochastic flow of diffeomorphisms $\Phi^t_\omega : \R^n \to \R^n$
indexed by the corresponding sample path $\omega \in \Omega$, where $\Omega$ is
the canonical path space for $\{ W^k\}$. 
 
Under the (natural) assumption that there is a unique stationary probability measure $\mu$
for $(x_t)$, it follows from standard tools in ergodic theory that the limits
\begin{align*}
	\lambda_1 & = \lim_{t\to \infty}\frac{1}{t} \log{|D_x\Phi_\omega^t|} \, , \\
	\lambda_\Sigma & = \lim_{t\to \infty} \frac{1}{t}\log{|\det D_x\Phi_\omega^t|} \,
\end{align*}
exist with probability 1 for $\mu$ almost every $x \in \R^n$, while the limiting values $\lambda_1, \lambda_\Sigma$
are constant (independent of $x$) and deterministic (independent of the random sample $\omega$); for more details, see Theorem \ref{thm:non-random-MET} below. 

The value $\lambda_1$ is known as the \emph{top Lyapunov exponent}; the condition $\lambda_1 > 0$
implies exponential sensitivity with respect to initial conditions, as well as local moving-frame saddle-type behavior, 
for the (random) trajectories of $\mu$-typical initial $x$ corresponding to almost every random sample $\omega \in \Omega$ (see \cite{young1995ergodic, barreira2002lyapunov}; see also \cite{arnold1995random, liu2006smooth, kifer2012ergodic} for emphasis on random dynamics). 
This nonlinear moving-frame behavior is the subject of \emph{smooth ergodic theory}, and leans on the Multiplicative Ergodic Theorem \cite{raghunathan1979proof, oseledets1968multiplicative, walters1993dynamical}, which, roughly speaking, provides a decomposition of the tangent bundle $T\R^d$ into (random) sub-bundles along which various exponential growth rates (Lyapunov exponents) are realized. The value $\lambda_\Sigma$ is the {\em sum Lyapunov exponent} and describes the asymptotic exponential rate at which Lebesgue volume is contracted/expanded by the dynamics.  For more information, see, e.g., the expositions \cite{young2013mathematical, wilkinson2017lyapunov}. 

\subsubsection*{Fisher information identity}

The purpose of this paper is to put forward a new method for obtaining lower bounds on $\lambda_1$ based on the regularity of a certain stationary density. Results are framed in terms of the augmented Markov process $(x_t, v_t)$ tracking a trajectory in phase space $(x_t)$ and the tangent direction
\begin{align}
v_t := \frac{D_x\Phi_\omega^tv}{|D_x\Phi_\omega^tv|},  \label{def:vt}
\end{align}
for a fixed initial unit vector $v_0 := v$. 
The process $(v_t)$ is natural in the study of Lyapunov exponents, on noticing by the chain rule that
convergence of the finite-time Lyapunov exponents $\frac1n \log | D_x \Phi^n_\omega v|$ is 
connected to a strong law of large numbers for the observable $g_\omega(x, v) = \log |D_x \Phi^1_\omega v|$ of the Markov process $(x_t, v_t)$. We call $(w_t) = (x_t, v_t)$ the 
\emph{projective process} on the unit tangent bundle\footnote{The distinction between $v_t$ or $-v_t$ is irrelevant for Lyapunov exponents, and so morally $(w_t)$ should be thought of as evolving on the projective bundle $P \R^n = \R^n \times P^{n-1}$, where $P^{n-1} = \S^{n-1} / \pm$ is the projective space of $\R^n$.   However, in this work we adopt the (technically more convenient) convention that $(w_t)$ lives on $\S \R^n$. }
 $\S \R^n = \R^n \times \S^{n-1}$. 
For more background, see, e.g., \cite{ledrappier1984quelques, walters1993dynamical}; see also 
 Theorem III.1.2 of \cite{kifer2012ergodic} and \cite{carverhill1986nonrandom} in the context of 
 random dynamical systems. 
The projective process $(w_t)$ solves a corresponding SDE on $\S \R^n$, 
\begin{equation}\label{eq:spherebundleeq}
	\dee w_t = \tilde{X}_0(w_t)\dt + \sum_{k=1}^r\tilde{X}_k(w_t)\strat \dee W^k_t \, ,
\end{equation}
where $\tilde{X}_j$ denotes the `lift' of the vector field $X_j$ to a vector field on $\S \R^n$ (see Section \ref{subsec:sde-lyap} for a precise definition). 
Our first main result is an identity connecting stationary densities of $(w_t)$
 to the exponent $\lambda_1$ through a partial \emph{Fisher information}-type 
 quantity defined for a probability density $f$ on $\S\R^n$ by
\[
FI(f) := \frac{1}{2}\sum_{k=1}^r \int_{\S M} \frac{|\tilde{X}_k^*f|^2}{f} \,\dee q \, , 
\]
where $\dee q$ is the (Riemannian) volume measure on $\S \R^n$ and $\tilde X^*$ denotes the $L^2(\dee q)$-adjoint of $\tilde X$ as a differential operator.

\begin{theoremAlpha}\label{thm:fishInfoIntro}
Assume $(w_t)$ admits a unique stationary density $f > 0$ on $\S \R^n$ satisfying some additional mild moment estimates (see Proposition \ref{prop:FI-gen-intro} for details). 
Let $\rho$ denote the corresponding stationary density of $(x_t)$ on $\R^n$. Then, 
\begin{align*}
	FI(f) & = n\lambda_1 - 2\lambda_{\Sigma}, \\ 
    FI(\rho)& = -\lambda_{\Sigma}.
\end{align*}
\end{theoremAlpha}
This formula can be interpreted as the time-infinitesimal analogue of the well-known relative entropy formula for Lyapunov exponents for SDE \cite{furstenberg1963noncommuting, baxendale1989lyapunov} (see also \cite{ledrappier1986positivity}) and in many ways provides a missing link between these relative entropy formulas and the Furstenberg-Khasminskii formula (Lemma \ref{lem:FK}; see also \cite{khasminskii2011stochastic, arnold1995random}). For more on the relative entropy formula, see equation \eqref{eq:RE} and the discussion in Section \ref{Sec:Context}. A complete statement and proof of Theorem \ref{thm:fishInfoIntro} 
is given in Section \ref{sec:fishStatementNew}. 

\subsubsection*{Fisher information and hypoelliptic regularity}

If, hypothetically, $\{\tilde X_1, ..., \tilde{X}_k\}$ were to span the tangent space of $\S \R^n$ everywhere
in a `uniform' way, then we would obtain the easy lower-bound
\begin{equation}\label{eq:easy-bound}
\norm{\grad f}_{L^1}^2 \lesssim \norm{f}_{L^1} FI(f) = FI(f) \, , 
\end{equation}
i.e., the Fisher information $FI(f)$ dominates $W^{1,1}$ regularity of the stationary density $f$ over all of $\S \R^n$, thereby providing a non-vanishing lower bound on the Lyapunov exponent. 
Unfortunately, for most cases of interest, the vectors $\{ \tilde X_k(w)\}_{k=1}^r$ do not span $T_w \S \R^n$ at each $w\in \S \R^n$ (even if $\{X_k(x)\}_{k=1}^r$ do span $T_x\R^n$), and so the functional $FI(f)$ only directly controls \emph{partial regularity} along the forced directions $\{ \tilde X_k\}_{k = 1}^r$ and is degenerate along unforced directions. In particular, there is no simple way to relate $FI(f)$ to, e.g., Sobolev regularity of $f$ in all directions on $\S\R^n$ without using more information about $f$. 

Our second main result shows that this degeneracy in $FI(f)$ can be resolved if one assumes that the degenerate elliptic Kolmogorov equation governing $f$,
\begin{equation}\label{eq:Hor-PDE}
    X_0^*f + \frac{1}{2}(X_k^*)^2f =0,
\end{equation}
is {\em hypoelliptic}.
Hypoellipticity of this kind was studied by H\"ormander \cite{H67}, who showed that under a checkable \emph{Lie bracket spanning} 
condition on the Lie algebra generated by the vector fields $\{ \tilde X_k\}_{k = 0}^r$ at each point $w\in \S \R^n$, 
regularity of $f$ along the $\{ \tilde X_k\}_{k =1}^r$ directions propagates to regularity along \emph{all} directions in $\S \R^n$. 
One of our main contributions in this paper is an analogous and quantitative version of H\"ormander's result for the Fisher information $FI(f)$: assuming $\{\tilde X_k\}_{k = 0}^r$ 
satisfies the parabolic version of H\"ormander's bracket spanning condition (Definition \ref{def:Hormander} below) and that $f$ is a stationary probability density for $(w_t)$ solving the PDE \eqref{eq:Hor-PDE}, we establish that there exists a regularity parameter $s \in (0,1)$ such that for any open, bounded ball $U \subset \S \R^n$, there exists a constant
$C = C_U > 0$ with
\begin{align}\label{eq:nonQuantHypoRegSec1}
\| \chi_U f\|_{W^{s, 1}}^2 \leq C( 1 + FI(f)) \, , 
\end{align}
where $\chi_U$ is a smooth cutoff subordinate\footnote{i.e. $\exists V \subset \S\R^n$ open and bounded such that $\overline{U} \subset \subset V$, $\chi_U(x) = 1$ for $x \in U$ and $\chi_U(x) = 0$ for $x \not\in V$. The constant in \eqref{eq:nonQuantHypoRegSec1} will depend on the choice of $V$ and $\chi_U$, but this dependence has been suppressed as it is unimportant.} to $U$.

Equation \eqref{eq:nonQuantHypoRegSec1} is not immediately useful in isolation, as it is unclear how to directly use this to obtain information about $n\lambda_1$ from $FI(f)$. For instance, the constant $C$ depends on the details of precisely how the Lie brackets are put together to span the tangent space and is difficult to control. Moreover \eqref{eq:nonQuantHypoRegSec1} does not preclude the vanishing of $FI(f)$ as an inequality like \eqref{eq:easy-bound} would.
However, we can obtain a far more useful estimate applicable for SDE in small noise regimes, e.g., 
\begin{align}\label{eq:smallNoiseSDESec11111}
\dee x_t^\eps = X_0^\eps(x_t^\eps)\,\dt + \sqrt{\eps} \sum_{k=1}^r X_k^\eps(x_t^\eps)\strat\dee W^k_t \, ,
\end{align}
where $\epsilon \in (0,1)$ is a small parameter and $\{ X_k^\epsilon\}_{k = 0}^r$ is a parametrized family of uniformly smooth vector fields. In this context, the Fisher information identity in Theorem \ref{thm:fishInfoIntro} reads
\begin{align}
FI(f^\epsilon) = \frac{n \lambda_1^\epsilon - 2 \lambda_\Sigma^\epsilon}{\epsilon} \, , \label{eq:FIintroEps}
\end{align}
provided that for all $\epsilon$, the corresponding projective process $(x_t^\epsilon, v_t^\eps)$
admits a unique stationary density $f^\epsilon$ on $\S \R^n$. 
Our second main result is that assuming that $\{\widetilde{X}_0^\eps,...,\widetilde{X}_r^\ep\}$ satisfies H\"ormander's hypoellipticity condition uniformly-in-$\eps$ (locally in $x$), we may obtain an estimate which is independent of $\eps$.  

\begin{theoremAlpha}\label{thm:unifHypoRegEstIntro}
Assume that $\{ \tilde X_k^\epsilon\}_{k = 0}^r$ satisfies H\"ormander's parabolic bracket spanning condition uniformly in $\epsilon \in (0,1)$ (see Definition \ref{def:UniHormander}). Then, there exists $s \in (0,1)$ and for each bounded open set of the form $U := B_R(x_0) \times \S^{n-1} \subset \S \R^n$, a constant $C = C_U > 0$ (independent of $\ep>0$) such that for all $\epsilon \in (0,1)$ and for any absolutely continuous stationary measure $f^\eps$ of $(x_t^\eps, v_t^\eps)$, we have the estimate 
\[
\norm{\chi_U f^\eps}_{W^{s,1}}^2 \leq C( 1 + FI(f^\epsilon)) \, . 
\]
\end{theoremAlpha}

In view of \eqref{eq:FIintroEps}, it is clear that Theorem \ref{thm:unifHypoRegEstIntro} is most useful for weakly-damped systems, for example, systems where $\lambda_\Sigma = -O(\eps)$.
Crucially however, it does not have to be exactly zero. In this manner, we can treat systems which are close  to, but not exactly, volume preserving (see the discussion in Section \ref{Sec:Context} for more detail). 

The proof of Theorem \ref{thm:unifHypoRegEstIntro} is carried out in Section \ref{sec:Hor} and requires a significant re-working of H\"ormander's work to pass from the original $L^2$ framework to an $L^1$-compatible
framework more suited to estimates involving Fisher information. Of course, there is a large literature of works extending H\"ormander's theory in various ways, e.g., to handle rough coefficients: we refer the reader to, e.g., \cite{Polidoro16,Anceschi2019,LanconelliEtAl2020,FarhanGiulio19,GIMV16,Mouhot2018,BL20} and the references therein.
However, as far as the authors are aware, there are no previous works that fundamentally rework the theory into $L^1$.

\subsubsection*{Application to ``Euler-like'' bilinear systems} \label{sec:EulerLikeIntro}

We anticipate that Theorems \ref{thm:fishInfoIntro} and \ref{thm:unifHypoRegEstIntro} are applicable to a wide class of weakly damped, weakly-driven
SDE. In this manuscript, we present an application to a natural class of examples 
with bilinear drift term, which we call \emph{Euler-like} systems: 
\begin{align}\label{eq:SDEintroFD22}
\dee x_t^\epsilon = (B(x_t^\epsilon,x_t^\eps) + \epsilon A x_t^\epsilon)\dt + \sum_{k=1}^r X_k \dee W_t^{k} \, .
\end{align}
Here, $\set{X_k}_{k = 1}^r$ is a collection of constant ($x$-independent) forcing vector fields, while $B : \R^n \times \R^n \to \R^n$ is bilinear, nontrivial (not identically zero), and satisfies
\[
\Div B = 0 \, , \quad x \cdot B(x,x) = 0 \, , 
\]
so in particular the $\epsilon = 0$ dynamics is norm (``energy'') and volume preserving. 
Meanwhile, the term $\epsilon A$ provides weak linear damping, where
 $A$ is assumed to be a symmetric, negative-definite $n \times n$ matrix. 
Stochastically forced versions of both Lorenz 96 and Galerkin truncations of 2d and 3d Navier-Stokes on a torus can be cast in this form. 

\begin{remark}
Note the absence of $\sqrt{\epsilon}$ in front of the noise term in \eqref{eq:SDEintroFD22}; 
in this class of models, the dynamics is subjected to weak dissipation of order $\epsilon$ 
at a constant level of noisy driving. 
However, due to the bilinear form of the nonlinearity in the drift term, 
\eqref{eq:SDEintroFD22} is equivalent  to the weakly-driven, weakly damped form
\begin{align}\label{eq:SDEintroFD22resc}
\dee x_t^\epsilon = (B(x_t^\epsilon,x_t^\eps) + \epsilon A x_t^\epsilon)\dt + \sqrt{\eps}\sum_{k=1}^r X_k \dee W_t^{k}, 
\end{align}
 by rescaling $x_t \mapsto \sqrt{\eps} x_{\sqrt{\eps} t}^\eps$, 
replacing $\eps \mapsto \eps^{3/2}$, and using the self-similarity of Brownian motion. 
This rescaling does not affect our results on 
Lyapunov exponents, since upon setting $\hat \eps = \eps^{3/2}$, the Lyapunov exponent $\hat \lambda^{\hat \eps}_1$ of \eqref{eq:SDEintroFD22resc} with parameter $\hat \epsilon$ is related to the Lyapunov exponent $\lambda_1^\eps$ of \eqref{eq:SDEintroFD22} by the identity 
\[\frac{\hat {\lambda}_1^{\hat \eps}}{\hat \eps} = \frac{\lambda_1^\eps}{\eps}.
\] 
Moreover, while \eqref{eq:SDEintroFD22} is common among models of complex real-world
systems, we note that in this scaling, the stationary measure $\mu$
 has characteristic energy $\int \abs{x}^2 \dee\mu(x) \approx \eps^{-1}$. Since we are concerned
with the regime $\epsilon \ll 1$, it is natural to consider the weakly-damped, weakly-driven
rescaling \eqref{eq:SDEintroFD22resc} which has characteristic energy $O(1)$.
\end{remark}

For this class of systems \eqref{eq:SDEintroFD22}, we give a sufficient condition for a positive Lyapunov exponent 
in terms of the bracket spanning condition for the lifted vector fields $\{\tilde X_0^\epsilon, \tilde X_1,\ldots \tilde X_r\}$
on $\S \R^n$ corresponding to the projective process $(x_t^\epsilon, v_t^\epsilon)$, where here
we follow the convention $X_0^\epsilon(x) = B(x,x) + \epsilon A x$. 
\begin{theoremAlpha}\label{thm:critForEulerLikeIntro}
Assume $\{\tilde X_k\}_{k = 0}^r$ satisfies H\"ormander's parabolic bracket spanning condition uniformly in $\epsilon \in (0,1)$, and additionally, 
assume that for all $\epsilon \in (0,1)$ we have that $(x_t^\epsilon, v_t^\epsilon)$
admits a unique stationary density $f^\epsilon$. Then, 
\begin{align*}
\lim_{\eps \to 0} \frac{\lambda_1^\eps}{\eps} = \infty \, . 
\end{align*}
\end{theoremAlpha}
The basic idea of the proof is as follows. If $\liminf \lambda^\epsilon_1/\epsilon < \infty$, then 
Theorem \ref{thm:unifHypoRegEstIntro} implies that the 
stationary densities $f^\epsilon$ of the projective process are uniformly bounded in $W^{s, 1}_\loc$ along some subsequence of $\epsilon \to 0$. Using the locally compact embedding of $W^{s, 1}$ in $L^1$ (Lemma \ref{lem:Ws1Precompact}), one easily establishes that the sequence of $f^\eps$ are precompact\footnote{We stress that it is not enough that $\{f^\epsilon \dq\}$ is merely tight as a sequence of measures on $\S\R^n$, which would not ensure that the limiting measure has a density . Rather, we vitally use strong $L^1$ convergence coming from the uniform regularity estimate to ensure that the limiting measure we obtain has a density w.r.t. $\dq$. Of course, one could replace strong convergence in $L^1$ with weak $L^1$ convergence via an weaker uniform integrability assumption and still obtain a limit in $L^1$.} in $L^1$, and on refining the subsequence
we can find an $L^1$ limit $f^0$, which as one easily checks, is an invariant probability density
for the \emph{zero noise, deterministic} projective process $(x_t^0, v_t^0)$. In contrast with the noisy setting, the existence of an invariant density for the projective process of a deterministic flow is 
extremely rigid and is equivalent, roughly speaking, to the flow being an isometry with respect to a certain Riemannian metric (Theorem \ref{sec:prelimEuler5}). This can be ruled out for the zero-noise flow due to shearing between energy surfaces $\{ |x| = \textrm{const}\}$ (a natural consequence of the quadratic nonlinearity), ruling out the possibility that $(D_x \Phi^t)$ is an isometry.
Hence, the proof of Theorem \ref{thm:critForEulerLikeIntro} is by a compactness-rigidity argument. 
For more details and the full proof of Theorem \ref{thm:critForEulerLikeIntro}, see Section \ref{sec:Inviscid-Limit}. 

In this manuscript we confirm the sufficient conditions in Theorem \ref{thm:critForEulerLikeIntro} for the Lorenz 96 (L96) model\footnote{Note that L96 is distinct from the Lorenz 63 ``butterfly attractor'' model, an ODE on $\R^3$, introduced in Lorenz's seminal 1963 work \cite{lorenz1967nature}. 
}
\begin{align} \label{def:L96intro}
\dee u_m =  \big((u_{m+1}-u_{m-2})u_{m-1} - \epsilon u_m\big) \dt + q_m \dee \hat{W}_t^{m} \, , \quad 1 \leq m \leq J,
\end{align}
on $\R^J$, where $\set{q_m}$ are fixed parameters and the $u_m$ are $J$-periodic in $m$, i.e., $u_{m + k J} := u_m$ \cite{Lorenz1996}. 

\begin{corollaryAlpha}\label{L96}
Assume $J \geq 7$ and that $q_1, q_2 \neq 0$.
Then, the top Lyapunov exponent $\lambda^\epsilon_1$ of the L96 model \eqref{def:L96intro}
satisfies $\lambda_1^\epsilon/\epsilon \to \infty$ as $\epsilon \to 0$. In particular, $\lambda_1^\epsilon > 0$
for all $\epsilon$ sufficiently small. 
\end{corollaryAlpha}

Remarkably, the problem of proving $\lambda^\epsilon_1 > 0$ for L96 was previously open in spite of overwhelming numerical evidence to support this \cite{Majda16,KP10,PazoEtAl08,BCFV02,OttEtAl04}. 
To the best of the authors' knowledge, ours is the first mathematically rigorous result in this direction. Moreover, $\lambda_1^\epsilon > 0$ does not follow from existing techniques, e.g., Furstenberg's criterion for the Lyapunov exponents of a random system; see Section \ref{Sec:Context} for more discussion. 
See also the beginning of Section \ref{sec:Conc} for discussion of a very recent application of Theorem \ref{thm:critForEulerLikeIntro} to Galerkin truncations of the 2d Navier-Stokes equations on a periodic box in the recent work \cite{BPS21} of the first and third authors of this paper. See Section \ref{sec:Inviscid-Limit} 
for the proof of Corollary \ref{L96}. 


\subsection{Context within prior work} \label{Sec:Context}

As remarked earlier, for a given system it can be extremely challenging to estimate its Lyapunov exponents and provide a mathematically rigorous account of its time-asymptotic behavior. Indeed, in principle Lyapunov exponents require infinitely precise information on infinitely many trajectories, and in practice the convergence of Lyapunov exponents to their `true' values can exhibit long stretches of intermittent behavior. This is especially so for deterministic systems in the absence of stochastic driving, for which one anticipates that ``chaotic'' and ``orderly'' regimes coexist in a convoluted way in both phase space as well as `parameter space', i.e., as the underlying dynamical system is varied: we refer the interested reader to, e.g., work on Newhouse phenomena in dissipative systems \cite{newhouse1979abundance, newhouse1974diffeomorphisms}; the proliferation of elliptic islands in volume-preserving systems \cite{duarte1999abundance}; known coexistence of chaotic and ordered regimes for the quadratic map family \cite{lyubich2002almost}; and $C^1$ generic dichotomies \cite{bochi2002genericity, bochi2005lyapunov}. For more background on this rich topic, see, e.g., \cite{young2013mathematical, pesin2010open, crovisier2018, wilkinson2017lyapunov}. 

Although it still presents significant challenges, the situation for Lyapunov exponents of stochastically forced systems is notably more tractable. To start, let us first address the body of work \`a la Furstenberg which describes necessary conditions for `degeneracy' of the Lyapunov exponents of a random dynamical system. As before, consider a stochastic flow of diffeomorphisms $\Phi^t_\omega$ on $\R^n$ arising from an SDE. Let $\mu$ be the (presumed unique) stationary measure for $x_t := \Phi^t_\omega(x_0)$, and  let $\lambda_1, \lambda_\Sigma$ be the corresponding Lyapunov exponents. 

Note that unconditionally we have $n \lambda_1 - \lambda_\Sigma \geq 0$. In this context, and brushing aside technical details, the criterion \`a la Furstenberg is due to a variety of authors (e.g., \cite{carverhill1987furstenberg,virtser1980products, royer1980croissance, ledrappier1986positivity}), and can be stated as follows: if $\nu \in \mathcal{P}(\S M)$ is a stationary measure for the projective process $(x_t, v_t)$ and $\dee\nu(x,v) = \dee\nu_x(v) \dee \mu(x)$ the disintegration of $\nu$, then for all $t > 0$ there holds (c.f. Theorem \ref{thm:baxFormulaSec2})
\begin{align}
\EE \int_{M} H( D_x\Phi^t_\ast \nu_{x} | \nu_{\Phi^t(x)} )\, \dee\mu(x) \leq t \left(n\lambda_1 - \lambda_{\Sigma}\right), \label{eq:RE}
\end{align}
where $H$ denotes the relative entropy, defined for any two probability measures $\eta,\lambda$ with $\eta \ll \lambda$, by
\begin{align}\label{eq:defnRelEntropyIntro}
	H(\eta|\lambda) := \int \log \left( \frac{\dee \eta}{\dee \lambda} \right) \,\dee \eta \,,
\end{align}
Unconditionally, relative entropy satisfies $H(\eta | \lambda) \geq 0$, and $H(\eta | \lambda) = 0$ iff
$\eta = \lambda$ by Jensen's inequality. 
From this we see that either 
\begin{align}\label{furstLyapIneq}
n \lambda_1 - \lambda_\Sigma > 0 \, , 
\end{align}
or the probabilistic law governing the stochastic flow admits a strong `degeneracy' in the sense that
\begin{align}\label{furstDegen}
(D_x \Phi^t_\omega)_* \nu_x = \nu_{\Phi^t_\omega(x)},
\end{align}
with probability 1 for all $t \geq 0$ and $\mu$-typical $x$. 
That this situation is very `degenerate'
follows from the fact that for fixed $x$ and $t$, the above right-hand side depends only on the time$-t$ position $\Phi^t_\omega(x)$, while the left-hand side depends additionally on the entire noise path 
$\omega|_{[0,t]}$. 

Observe that in the weakly-damped, weakly-driven setting of \eqref{eq:SDEintroFD22}, $\lambda_\Sigma^\epsilon = \epsilon \tr A < 0$ and so \eqref{furstLyapIneq} is agnostic as to whether $\lambda^\epsilon_1 > 0$ or not. Indeed, the techniques in the above-mentioned works are ``soft'' as the identity \eqref{furstDegen} is non-quantitative in the parameters of the underlying system. Although \eqref{eq:RE} does at least provide some kind of formula for $n \lambda_1 - \lambda_\Sigma$, it is unclear how to glean useful quantitative information directly from \eqref{eq:RE}. 

Interestingly, our Fisher-information identity in Theorem \ref{thm:fishInfoIntro}, (specifically \eqref{eq:FIcond} below), is essentially the time-infinitesimal analogue of \eqref{eq:RE}, as we show below in Section \ref{subsec:relativeEnt}. Hence, like \eqref{eq:RE}, our Theorem \ref{thm:fishInfoIntro} admits an interpretation in terms of the \emph{rate at which the degeneracy \eqref{furstDegen} fails to hold} for the stochastic flow $\Phi^t_\omega$. However, Theorem \ref{thm:fishInfoIntro} recasts the information in terms of the generator of $(w_t)$, which is more amenable now to the use of hypoelliptic PDE methods such as those employed in Theorem \ref{thm:unifHypoRegEstIntro}. This motivates the claim that the methods in this paper constitute a first step towards a quantitative \`a la Furstenberg theory. We remark that Fisher information-type quantities also commonly appear as the time derivatives of the relative entropy in the study of gradient flows and logarithmic Sobolev inequalities (see e.g. \cite{BakryEmery85,RV08,LedouxEtAl2015,Toscani1999}).

Beyond Furstenberg's criterion and its descendants, there is by now a large literature on the Lyapunov exponents of particular models for which we cannot do justice in this space. Instead, we will focus on a class of results most closely related to ours (Theorem \ref{thm:critForEulerLikeIntro}): small-noise expansions of Lyapunov exponents for weakly-driven stochastic systems. To frame the discussion, consider the abstract linear SDE
\begin{align}\label{contextGenSDE}
\dee V_t = A^\epsilon_t V_t \dee t + \sqrt{\epsilon} \sum_{k = 1}^r B_t^{k} V_t \circ \dee W_t^{k} \, , 
\end{align}
where $A_t^\epsilon, B^{k}_t$ are, in general, time-varying and/or themselves randomly driven, and $A_t^\epsilon$ may or may not exhibit some vanishingly weak damping as $\epsilon \to 0$. There are many works studying the scaling behavior of Lyapunov exponent $\lambda_1^\epsilon := \lim_{t \to \infty} \frac{1}{t} \log |V_t|$ of such systems, e.g., \cite{ausMilstein, pardoux1988lyapunov, pinsky1988lyapunov, imkeller1999explicit, moshchuk1998moment} in the constant coefficient case, and \cite{APW1986, baxendale2002lyapunov, baxendale2004vanderpol, malicet2020lyapunov} when the $A_t, B_t^{k}$ are coupled to some other stochastic process. To the authors' best knowledge, however, all of these results are restricted to settings where the $\epsilon = 0$ dynamics are relatively simple and essentially completely known. In comparison, our results are indifferent to any detailed description of the zero-noise dynamics. On the other hand, the sacrifice for our level of generality is that our estimate $\lambda^\epsilon_1 / \epsilon \to \infty$ is far weaker than an asymptotic expansion, and is likely to be sub-optimal for many models of interest. 

Of particular interest is that among models of the form \eqref{contextGenSDE}, scaling laws of the form $\lambda^\epsilon_1 \sim \epsilon^{\gamma}, \gamma \geq 1$ tend to be associated with zero-noise dynamics which are \emph{rigid isometries} (exhibiting no shearing) \cite{ausMilstein, pardoux1988lyapunov, baxendale2003lyapunov, baxendale2004vanderpol}. 
Meanwhile, laws of the form $\lambda^\epsilon_1 \sim \epsilon^\gamma, \gamma < 1$ are associated with zero-noise dynamics exhibiting some shearing mechanism. By way of example, \cite{ausMilstein, pinsky1988lyapunov} derive such scaling laws when $A_t$ as above is given by
\[
A_t \equiv \begin{pmatrix} 0 & 1 \\ 0 & 0 \end{pmatrix} \, , \quad B_t \equiv \begin{pmatrix} 0 & 0 \\ 1 & 0 \end{pmatrix} \, ,
\]
corresponding to the constant application of a horizontal shear in conjunction with a small, stochastically driven vertical shear. This analysis was extended to the setting of fluctuation-dissipation zero-noise limits of certain 2d completely integrable Hamiltonian systems in the work \cite{baxendale2002lyapunov}. 
Despite the consideration of high dimensional systems which are unlikely to be completely integrable, our Theorem \ref{thm:critForEulerLikeIntro} is in fact related to these works, in the sense that
the scaling $\lambda^\epsilon_1 / \epsilon \to \infty$ is derived by taking advantage of \emph{shearing
between the energy surfaces} $|x| = R, R > 0$ (see Section \ref{sec:prelimEuler5} for details).

Of course, shearing has long been regarded as a potential mechanism for the generation of chaotic behavior. As early as the late 70's it was realized that chaotic attractors could arise from time-periodic driving of a system undergoing a Hopf bifurcation \cite{zaslavsky1978simplest}, while subsequent mathematically rigorous work has confirmed this mechanism (see, e.g., \cite{wang2008toward} for an overview of this program). We also point out the work \cite{lin2008shear}, which provides a mix of heuristics, numerics, mathematical analysis and conjectures demonstrating the shearing mechanism as a source of chaotic behavior, as well as the works \cite{doan2018hopf, engel2019bifurcation} resolving some of the open problems in \cite{lin2008shear} 
pertaining to white-noise models. 

\subsection{Conclusion and outlook for future work} \label{sec:Conc}

We provide here some discussion on potential future extensions and applications of the ideas in this paper.

\medskip
\noindent {\bf Applications to different systems. }
As mentioned earlier, the class of Euler-like systems \eqref{eq:SDEintroFD22} includes 
Galerkin truncations of the Navier-Stokes equation on the periodic box in 2d and 3d, while
Theorem \ref{thm:critForEulerLikeIntro} provides a positive lower bound on the top Lyapunov exponent
provided that the corresponding projective process satisfies a uniform H\"ormander bracket spanning condition. Recently, the first and third author \cite{BPS21} affirmed this spanning condition 
for the 2d Galerkin-Navier-Stokes equations with additive forcing on rectangular 2d tori when only a small number of modes are directly forced and assuming that the Galerkin truncation is sufficiently large (i.e. the results hold for all sufficiently high dimensions). 

In contrast with the Lorenz 96 model, the coupling between modes of Galerkin truncations of PDE models 
is more `global', with low modes strongly coupled to high ones, and so for models of this kind it is substantially more difficult to check the projective H\"ormander's condition; the proof in \cite{BPS21} is in fact computer assisted, but applies to all sufficiently high dimensional Galerkin truncations and contains ideas that should be useful for verifying the projective spanning for other (sufficiently high dimensional) Galerkin truncations of PDEs. 
However, these ideas alone are likely inadequate for other kinds of high dimensional systems of interest, such as many-particle systems with weak dissipation. 

Another class of systems of interest is small random perturbations of completely integrable Hamiltonian flows, where estimates of the form $\lambda_1^\eps \gg \eps$ are far closer to optimal. For these, one
would replace shearing between energy shells as in the proof of Theorem \ref{thm:critForEulerLikeIntro}
with shearing between invariant tori, which in practice can be read directly off of the action-angle coordinates
for the system.

\medskip
\noindent {\bf Tighter hypoelliptic regularity estimates. } It is of natural interest to 
attempt to improve the scaling $\lambda_1^\eps \gg \eps$ that naturally falls out from 
Theorem \ref{thm:unifHypoRegEstIntro}. One way to do this is to 
attempt to strengthen the hypoelliptic regularity estimate by refining 
	the $\epsilon$ scaling to derive something like
	\[
	\| f^\eps \|_{W^{s, 1}}^2 \lesssim 1 + \frac{n \lambda_1^\eps - 2 \lambda_\Sigma^\eps}{\eps^{1 - \gamma}},
	\]
for some constant $\gamma > 0$. If such an estimate were true, the same compactness-rigidity argument 
of Theorem \ref{thm:critForEulerLikeIntro} would imply a scaling like $\lambda^\eps_1 \gtrsim \eps^{1 - \gamma}$, a significant improvement. For more discussion on potential improvements to 
Theorem \ref{thm:unifHypoRegEstIntro}, see the discussion in Section \ref{sec:Hor}. 

\medskip
\noindent {\bf Beyond compactness-rigidity.} Another way to strengthen the results of this paper
is to work exclusively with $\eps > 0$ without passing the limit as $\eps \to 0$. This was essentially
the approach of works \cite{pinsky1988lyapunov, baxendale2002lyapunov} estimating Lyapunov exponents for systems for which a nearly-complete understanding of the pathwise random dynamics was available. Although we largely lack such detailed information about the pathwise dynamics of high-dimensional models such as L96, there is some hope for a `middle ground', e.g., partial information such as some finite-time exponential growth mechanism resulting in a lower bound on $\norm {f^\eps}_{W^{s_*, 1}}$. An approach with a 
similar flavor for random perturbations of discrete-time systems, including the Chirikov standard map, was
carried out in the previous work \cite{blumenthal2017lyapunov}. 

\medskip
\noindent {\bf Finer dynamical information: moment Lyapunov exponents. }
Lyapunov exponents themselves provide asymptotic exponential growth rates, but the 
timescales along which these rates are realized can be quite long. Some quantitative control
is provided by large deviations principles in the convergence of the sequences $\frac1t \log | D_x \Phi^t_\omega (v)|$, the rate function of which is the Legendre transform of the 
\emph{moment Lyapunov exponent function} $p \mapsto \Lambda(p) := \lim_{t \to \infty} \frac1t \log \E |D_x \Phi^t_\omega v|^p$ (the limit defining $\Lambda(p)$ exists and is independent of $(x, v)$ under
natural conditions; see, e.g., \cite{arnold1984formula}). It would be highly interesting to see if ideas similar to
those presented in this work could provide quantitative estimates on the moment Lyapunov exponents
of weakly-driven systems. 

\medskip
\noindent {\bf More general noise models. }
One can also expand the noise models to which our work applies. One simple example of this is to extend Theorem \ref{thm:critForEulerLikeIntro} to different types of multiplicative noise.
While Theorems \ref{thm:fishInfoIntro} and \ref{thm:unifHypoRegEstIntro}  apply to multiplicative noise, certain aspects of Theorem \ref{thm:critForEulerLikeIntro} are specialized to additive noise, such as the arguments for projective spanning and irreducibility in Section \ref{ProjBraks} (and in \cite{BPS21}).  
Another extension would be to noise models which are not white-in-time, e.g., jump processes or the models used in \cite{KNS20,KNS20II}.
Our work is clearly deeply tied to the infinitesimal generator $\mathcal{L}$, and so any such extension will  be rather non-trivial. 
A simpler candidate for non-white forcing is constructed from `towers' of coupled Ornstein-Uhlenbeck processes, which can be built to be $C^k$ in time for any $k \geq 0$ (see, e.g., \cite{bedrossian2018lagrangian} for details). While such a noise model is built out of SDE and so largely falls under the purview of the analysis in this paper, it is not clear how to prove the analogue of Theorem \ref{thm:critForEulerLikeIntro}. This will be considered in future research. 

\subsection{Plan for remainder of the paper}\label{subsec:planIntro}

Preliminaries on the SDE setting in this paper are provided in Section \ref{sec:FI}, while
the complete statement and proof of the Fisher information identity in Theorem \ref{thm:fishInfoIntro}
is provided in Section \ref{sec:fishStatementNew}. 
In Section \ref{sec:Hor} we turn attention to the full statement and proof of the hypoelliptic regularity estimate in Theorem \ref{thm:unifHypoRegEstIntro}. 
The remainder of the paper is devoted to the proof of Theorem \ref{thm:critForEulerLikeIntro} and 
Corollary \ref{L96}: in Section \ref{ProjBraks} we address how some of the assumptions 
of Theorem \ref{thm:fishInfoIntro} and \ref{thm:unifHypoRegEstIntro} (projective spanning and uniqueness of stationary densities) are checked in practice, while the compactness rigidity argument is carried out in Section \ref{sec:Inviscid-Limit}.

\section{Preliminaries}\label{sec:FI}

We present here preliminaries on SDE, including a discussion of stationary measures and their properties; 
the existence of Lyapunov exponents; and remarks on the projective process, a crucial tool in Theorem \ref{thm:fishInfoIntro}. 

\medskip

Throughout, $(M, g)$ is a smooth, connected, geodesically complete and orientable Riemannian manifold  without boundary (not necessarily bounded); let $n = \dim M$. Throughout, we abbreviate 
the Riemannian / Lebesgue volume on $(M,g)$ by $\dee x$. Here, and everywhere below unless specified otherwise, we use the notation $\brak{a,b}_x = g_x(a,b)$ for $a,b \in T_x M, x \in M$.

We consider the stochastic process
\begin{equation}\label{eq:general-SDE}
	\dee x_t = X_0(x_t)\,\dt + \sum_{k=1}^r X_k(x_t)\strat\dee W^k_t \, , 
\end{equation}
where $\{X_k\}_{k=0}^r$ are a family of smooth vector fields (potentially unbounded) on $M$ and $\{W^k\}_{k=1}^r$ are independent standard Wiener processes with respect to a canonical stochastic basis $(\Omega,\mathscr{F},(\mathscr{F}_t),\P)$.

\subsection{Background on SDEs}\label{subsec:sde-lyap}

The following are standing hypotheses imposed throughout the paper, and ensure the existence with probability 1 of the limits defining Lyapunov exponents. 

 \begin{assumption} \label{ass:WP} \hspace{1em}
\begin{enumerate}
	\item[(i)] For each initial datum $x\in M$, equation \eqref{eq:general-SDE} has a unique global solution $(x_t)$ with probability 1. The (random) solution maps $x \mapsto x_t =: \Phi^t_\omega(x), t \geq 0$ comprise a (stochastic) flow of $C^r$ diffeomorphisms $(\Phi^t_\omega)$ on $M$, $r \geq 2$. 
 \item[(ii)] The Markov process $(x_t)$ admits a unique, absolutely continuous stationary probability measure $\mu$ on $M$. We write $\rho = \frac{\dee \mu}{\dee x}$ for the density of $\mu$. 
 \item[(iii)] The measure $\mu$ has the integrability condition 
 \[
 	\E \int_M \left[\log^+{|D_x\Phi^{t}|} + \log^+{|(D_x\Phi^{t})^{-1}|} \right]\dee \mu(x) < \infty \, . \footnote{Here, for $a > 0$ we write $\log^+a := \max\{ \log a, 0\}$ for the positive part of $\log$. }
 \]
\end{enumerate}
\end{assumption}
Although not automatic, Assumption \ref{ass:WP}(i) is well-studied and follows from mild conditions on \eqref{eq:general-SDE}, although careful checking is required in the case when $M$ is noncompact and the $\{ X_k\}$ are unbounded; see, e.g., \cite{kunita1997stochastic, arnold1995random}. Given (i) and (ii), item (iii) follows under mild moment assumptions  \cite{kifer1988note}.

For (ii), we separately address existence, absolute continuity and uniqueness. Existence of stationary measures when $M$ is compact is immediate from compactness in the narrow topology on the space of probability measures on $M$, while when $M$ is noncompact some additional assumption
is needed to ensure tightness as $t \to \infty$ of the distribution $\mathrm{Law}(x_t)$ of $x_t$, usually via a Lyapunov-Foster drift condition \cite{meyn2012markov}. 

Absolute continuity is usually checked using H\"ormander's parabolic condition for the vector fields $\{ X_k\}$, which we briefly recall below. 
For a manifold $\cM$, let $\mathfrak{X}(\cM)$ denote the set of smooth vector fields on $\cM$. Elements $X \in \mathfrak X(\cM)$ are regarded in the usual way as first-order differential operators acting on observables $w : \cM \to \R$ via $Xw = dw(X)$.
For vector fields $X,Y$, we write $[X,Y]$ for the standard Lie bracket of $X$ and $Y$.  
\begin{definition} \label{def:Hormander}
  Given a collection of vector fields $Z_0,Z_1,\ldots, Z_r$ on a manifold $\mathcal{M}$, we define collections of vector fields $\mathscr{X}_0\subseteq \mathscr{X}_1 \subseteq\ldots$recursively by
\[
\begin{aligned}
  &\mathscr{X}_0 = \{Z_j\, :\, j \geq 1\},\\
  &\mathscr{X}_{k+1} = \mathscr{X}_k \cup \{[Z_j,Z]\,:\, Z \in \mathscr{X}_k, \quad j \geq 0\}.
  \end{aligned}
\]
We say that $\{Z_i\}_{i=0}^r$ satisfies the {\em parabolic H\"ormander condition} (also called bracket spanning) if there exists $k$ such that for all $w \in \mathcal{M}$,
\begin{align}\label{eq:spanningCondIntro22}
    \mathrm{span}\left\{Z(w)\,:\, Z\in \mathscr{X}_k\right\} = T_w \mathcal{M}. 
\end{align}
\end{definition}
Next, we recall a version of H\"ormander's theorem for transition kernels.
\begin{theorem}[H\"ormander's theorem \cite{H67}]
Assume $\{ X_k\}_{k = 0}^r$ satisfy the parabolic H\"ormander condition. Then, the transition 
kernels $P^t(x, K) := \P(x_t \in K | x_0 = x)$ are all absolutely continuous with respect to $\dee x$. 
\end{theorem}

Given existence and absolute continuity, the Doob-Khasminskii theorem (see e.g. \cite{DPZ96}) ensures 
that uniqueness follows if we have some additional topological 
irreducibility condition ensuring that all initial conditions in $M$ can ``access'' any open set $O \subset M$
with positive probability.
The version we take in Definition \ref{def:Irr} is stronger than necessary but suffices for our purposes here. 

\begin{definition}[Topological irreducibility] \label{def:Irr}
Consider a stochastic process $(z_t)$ defined on a complete metric space $\mathcal{Z}$. 
We say $(z_t)$ is (topologically) irreducible if for every open set $O \subset \mathcal{Z}$, initial condition $z \in \mathcal{Z}$, and $t > 0$ there holds 
\begin{align*}
\PP(z_t \in O | z_0 = z) > 0 \,. 
\end{align*}
\end{definition}

For Markov processes coming from SDEs on manifolds, a very common method of proving topological irreducibility is through the Stroock-Varadhan support theorem \cite{Stroock1972-nc} (see Theorem \ref{thm:support} below), which connects supports of transition kernels to a \emph{controllability problem} treating the Brownian paths $W_t^k$ as control parameters. 
For more information, see the discussion in Section \ref{sec:Irr}, where irreducibility for the class of Euler-like models is treated in more detail. 

We close this discussion by addressing the existence of Lyapunov exponents for the stochastic flow of diffeomorphisms $\Phi^t_\omega$. 
\begin{theorem}\label{thm:non-random-MET}
Assume \eqref{eq:general-SDE} satisfies Assumption \ref{ass:WP}. 
Then there exist positive, deterministic constants $\lambda_1$ and $\lambda_\Sigma$, independent of both the random sample $\omega$ as well as $x \in M$, such that for $\P\tensor \mu$ almost every $(\omega, x) \in \Omega \times M$ the following limits hold:
\begin{align*}
	\lambda_1 & = \lim_{t\to \infty}\frac{1}{t} \log{|D_x\Phi_\omega^t|} \, , \\
	\lambda_\Sigma & = \lim_{t\to \infty} \frac{1}{t}\log{|\det D_x\Phi_\omega^t|} \, .
\end{align*}
\end{theorem}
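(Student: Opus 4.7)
The plan is to recognize the two limits as the asymptotics of a submultiplicative (resp.\ multiplicative) cocycle over an ergodic, measure-preserving skew-product flow, and apply Kingman's subadditive (resp.\ Birkhoff's) ergodic theorem.

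First, I would set up the skew-product. Let $\theta_t : \Omega \to \Omega$ denote the canonical Wiener shift, characterized by $W^k_s \circ \theta_t = W^k_{s+t} - W^k_t$. Uniqueness of strong solutions to \eqref{eq:general-SDE} yields the cocycle identity $\Phi^{s+t}_\omega = \Phi^s_{\theta_t \omega} \circ \Phi^t_\omega$, so that
\[
\Theta^t(\omega, x) := (\theta_t \omega, \, \Phi^t_\omega(x))
\]
defines a measurable flow on $\Omega \times M$ preserving the product measure $\P \tensor \mu$, by Assumption \ref{ass:WP}(ii) and the fact that $\P$ is $\theta_t$-invariant. Crucially, uniqueness of $\mu$ in Assumption \ref{ass:WP}(ii), combined with the independent-increment structure of $\{\theta_t\}$, implies ergodicity of $\Theta^t$ with respect to $\P \tensor \mu$; this is a standard correspondence between stationary measures of the Markov process and shift-invariant product-type measures for the skew-product (see, e.g., Kifer's treatment of random dynamical systems).

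Second, I would apply Kingman's subadditive ergodic theorem to $F_t(\omega,x) := \log|D\Phi_\omega^t(x)|$. The chain rule $D\Phi_\omega^{s+t}(x) = D\Phi_{\theta_t\omega}^s(\Phi_\omega^t(x)) \cdot D\Phi_\omega^t(x)$ and submultiplicativity of the operator norm give the subadditivity relation
\[
F_{s+t}(\omega, x) \; \leq \; F_t(\omega, x) + F_s(\Theta^t(\omega, x)).
\]
The integrability hypothesis $\E \int_M \log^+ |D\Phi^t|\, \dee\mu < \infty$ from Assumption \ref{ass:WP}(iii) provides $(F_1)^+ \in L^1(\P \tensor \mu)$, while the symmetric integrability of $\log^+|D\Phi^{-t}|$ prevents the limit from being $-\infty$ via $F_t \geq -\log|D\Phi_\omega^t(x)^{-1}|$. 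Kingman's theorem then delivers $\P \tensor \mu$-a.s.\ convergence of $t^{-1} F_t$ to a $\Theta^t$-invariant measurable function, and ergodicity of $\Theta^t$ forces that function to be a.s.\ equal to a deterministic constant $\lambda_1$. For the sum of the exponents, set $G_t(\omega,x) := \log|\det D\Phi_\omega^t(x)|$; since $\det$ is \emph{multiplicative} under composition, $G_t$ is an \emph{additive} cocycle, $G_{s+t}(\omega, x) = G_t(\omega, x) + G_s(\Theta^t(\omega,x))$. Integrability follows from the elementary bounds $|\det D\Phi| \leq |D\Phi|^n$ and $|\det D\Phi|^{-1} \leq |D\Phi^{-1}|^n$ together with Assumption \ref{ass:WP}(iii). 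Birkhoff's ergodic theorem for $\Theta^t$ on $(\Omega \times M, \P \tensor \mu)$ then yields a.s.\ convergence of $t^{-1} G_t$ to a constant $\lambda_\Sigma$.

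The main obstacle, in my view, is not the ergodic-theoretic machinery itself—which is by now textbook once one has the correct skew-product—but rather verifying ergodicity of $\Theta^t$ from uniqueness of $\mu$ in the possibly noncompact setting. This involves the (well-known but nontrivial) identification of ergodic stationary measures for the Markov semigroup with ergodic $\Theta^t$-invariant measures of product type, and requires care with $\sigma$-algebras generated by the noise increments. The passage from a.e.\ $x \in M$ (with respect to $\mu$) to a.e.\ $x \in M$ with respect to Lebesgue measure requires absolute continuity of $\mu$, which is part of the hypothesis.
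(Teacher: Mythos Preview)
Your proposal is correct and matches the paper's approach precisely: the paper does not give a detailed proof but simply states the theorem as a ``standard result [which] is a corollary of the Kingman subadditive ergodic theorem as well as some basic ergodic theory for random dynamical systems,'' citing \cite{kingman1973subadditive} and \cite{kifer2012ergodic}. Your write-up is exactly the unpacking of that remark---skew-product, ergodicity from uniqueness of the stationary measure, Kingman for the operator norm, Birkhoff for the determinant---and your identification of the ergodicity step as the only genuinely nontrivial ingredient is apt.
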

Theorem \ref{thm:non-random-MET} is classical and follows from the Kingman subadditive ergodic theorem \cite{kingman1973subadditive} and some basic ergodic theory for random dynamical systems \cite{kifer2012ergodic}.

\subsection{The projective process}\label{subsec:prelimProjProcess}

As remarked in Section \ref{sec:Intro}, the
 Fisher-information identity we obtain (Theorem \ref{thm:fishInfoIntro}) is framed in terms
of the \emph{projective process}, i.e., the process on tangent directions
\begin{align*}
v_t := \frac{D_x\Phi_\omega^tv}{|D_x\Phi_\omega^tv|} \, , 
\end{align*}
so that the full process $w_t := (x_t, v_t)$ lives on the unit tangent bundle $\S M \subset TM$ 
with fibers $\S_x M \subset T_x M$. 
Our aim in this Section \ref{subsec:prelimProjProcess} will be to 
lay out preliminaries framing $(w_t)$ as the solution to an SDE on $\S M$. 

We begin with some basic geometry for the manifold $\S M$. 
Using the Riemannian structure on $M$ and the Levi-Civita connection $\nabla$, we equip $\S M$ with the unique lifted metric $\tilde g$ (the Sasaki metric \cite{Sasaki1962-rt}) such that the bundle projection $\pi: \S M \to M$ is a Riemannian submersion. In particular, for each $w = (x,v)\in \S M$ we can decompose $T_w\S M$ into a horizontal subspace $H_w\S M$  of directions transverse to the fibers, identified with $T_x M$, and a vertical $V_w\S M$ subspace of directions along the fibers identified with $T_v (\S_x M)$, which itself is isomorphic to the orthogonal complement of $v$ in $T_x M$. The spaces $H_w \S M$ and $V_w \S M$ are orthogonal with respect to $\tilde{g}$, giving the orthogonal decomposition
\[
	T_w\S M = T_xM\oplus T_v(\S_xM) \, .
\]

With these preliminaries in place, 
the process $(v_t)$ satisfies the SDE
\begin{align}
	\dee v_t = V_{\nabla X_0(x_t)}(x_t,v_t)\dt  + \sum_{k=1}^r V_{\nabla X_k(x_t)}(x_t,v_t)\strat \dee W^k_,
\end{align}
where $\nabla$ denotes the covariant derivative and, for $x \in M$ and $A:T_xM \to T_xM$ linear, the `vertical' vector field $V_A$ on $\S M$, $V_A(x, v) \in T_v (\S_x M)$ for each $(x, v) \in \S_x M$, is defined by
\[
V_{A}(x,v) := Av - v\brak{v,Av}_x =: \Pi_{(x,v)}Av \, .
\]
 The full projective process $(w_t)$ evolves according to
\begin{equation}\label{eq:spherebundleeq}
	\dee w_t = \tilde{X}_0(w_t)\dt + \sum_{k=1}^r\tilde{X}_k(w_t)\strat \dee W^k_t \, ,
\end{equation}
where the $\tilde{X}_k$ are vector fields on $\S M$, which when expressed in terms of the orthogonal
horizontal/vertical splitting read as
\[
	\tilde{X}_k(x,v) := (X_k(x),V_{\nabla X_k(x)}(x,v)). 
\]

Throughout, we take on the following assumption regarding $(w_t)$. 
\begin{assumption}\label{ass:WP2}
The SDE \eqref{eq:spherebundleeq} defining the process $(w_t)$ satisfies Assumptions \ref{ass:WP} (i) and (ii). In particular, the SDE defining $(w_t)$ is globally well-posed for a.e. random sample and every initial data; and the Markov process $(w_t)$ admits a unique, absolutely continuous stationary measure $\nu$ on $\S M$.
\end{assumption}

As with other SDE, it is common to check existence and uniqueness of stationary measures  
via H\"ormander's parabolic condition and topological irreducibility, respectively. 
In particular, bracket spanning for the projective process $(w_t)$ appears routinely in the random dynamics literature (see, e.g. \cite{baxendale1989lyapunov, dolgopyat2004sample}). 

\subsubsection{Bracket spanning for the projective process}\label{subsubsec:brackSpanProjGen2}

Below we give a sufficient condition for the projective process $(w_t)$ to satisfy bracket spanning assuming
bracket spanning for the base process $(x_t)$. 

In what follows, we will find it convenient to reformulate the parabolic H\"ormander condition using slightly different notation. Below, we write $\mathfrak{X}(M)$ for the space of smooth vector fields on $M$. 
Recall that given $X \in \mathfrak X(M)$, the \emph{adjoint representation} $\mathrm{ad}(X)$
is the linear operator $\mathfrak X(M) \to \mathfrak X(M)$ sending $Y \mapsto \mathrm{ad}(X) Y := [X,Y]$. 
\begin{definition}
For a given collection $\mathcal{F} \subseteq \mathfrak{X}(M)$ define the {\em Lie algebra generated by $\mathcal{F}$} by
\begin{equation}\label{eq:lie-def}
	\mathrm{Lie}(\mathcal{F}) := \Span\{\mathrm{Lie}^m(\mathcal{F})\,:\, m\geq 1\},
\end{equation}
where 
\[
	\mathrm{Lie}^m(\mathcal{F}) := \Span\{\mathrm{ad}(X_r)\ldots\mathrm{ad}(X_{2})X_1\,:\, X_i \in \mathcal{F}\,,\, 1\leq r\leq m\}.
\]
\end{definition}

\begin{definition}
Let $Z_0, Z_1, \cdots, Z_r\in \mathfrak{X}(M)$ be smooth vector fields on $M$, and define
$\mathcal X = \{ Z_1 ,\cdots, Z_r\}$. The {\em zero-time ideal} $\mathrm{Lie}(Z_0; \mathcal X)$
is defined to be the Lie algebra generated by $\mathcal X$ and $[\mathcal X, Z_0] := \{ [Z, Z_0] : Z \in \mathcal X\}$; that is, 
\[
\mathrm{Lie}(Z_0; \mathcal X) = \mathrm{Lie}(\mathcal X, [\mathcal X, Z_0]) \, .
\]
\end{definition}
The ideal $\mathrm{Lie}(Z_0; \mathcal X)$ plays a significant role in geometric control theory, which we will revisit in Section \ref{sec:Irr} when we discuss irreducibility for Euler-like systems. We also note that it is straightforward to check, using the Jacobi identity, that $\{Z_0,Z_1,\ldots,Z_r\}$ satisfies the parabolic H\"ormander condition as in Definition \ref{def:Hormander} if 
\[
	\mathrm{Lie}_x(X_0;\mathcal{X}) := \{X(x) \,:\, X \in\mathrm{Lie}(X_0;\mathcal{X})\} = T_{x}M,
\]
for all $x \in M$. 

We now turn to a general sufficient condition (Proposition \ref{prop:class-proj-span} below) on the vector fields $\{X_k\}_{k=0}^r$ so that their lifts $\{\tilde{X}_k\}_{k=0}^r$ satisfy the parabolic H\"ormander condition on $\S M$. Given $X\in \mathfrak{X}(M)$, define 
\begin{align}
	M_X(x) := \nabla X(x) - \frac{1}{n}\Div X(x) I \, . \label{def:MX} 
\end{align}
We view $M_X(x)$ as an element of $\mathfrak{sl}(T_x M)$, the Lie algebra of 
traceless linear operators on $T_x M$. 
Observe that since the projective vector field $V_{\nabla X}(v)$ includes a
projection orthogonal to $v$, we have the identity $V_{\nabla X} \equiv V_{M_X}$.
Define
\begin{equation}\label{eq:m-lie-alg-def}
	\mathfrak{m}_x(X_0;X_1,\ldots,X_r) := \{M_X(x) \,:\, X\in \mathrm{Lie}(X_0;X_1,\ldots, X_r)\,,\, X(x) =0\}.
\end{equation} 
Lemma \ref{lem:MatProj} in the Appendix implies that $\mathfrak{m}_x(X_0;X_1,\ldots,X_r)$ is indeed a Lie sub-algebra of $\mathfrak{sl}(T_xM)$ with respect to the usual commutator $[A,B] = AB - BA$ for linear operators.

The following relates bracket spanning for $\{ \tilde X_k\}_{k = 0}^r$ to the matrix Lie algebra
$\mathfrak m_x(X_0; X_1, \cdots, X_r)$. 
\begin{proposition}\label{prop:class-proj-span}
Let $\{X_k\}_{k=0}^r$ be a collection of smooth vector fields on $M$. Their lifts $\{\tilde{X}_k\}_{k=0}^r$ satisfy the parabolic H\"ormander condition on $\S M$ if and only if $\{X_k\}_{k=0}^r$ satisfy the parabolic H\"ormander condition on $M$ and for each $(x,v)\in \S M$ we have
\begin{equation}\label{eq:proj-trans}
	\{ V_A(x,v) \,:\, A \in \mathfrak{m}_x(X_0;X_1,\ldots,X_r)\} = T_v\S_xM.
\end{equation}
\end{proposition}
Proposition \ref{prop:class-proj-span} was stated without proof in Baxendale's paper \cite{baxendale1989lyapunov}. For the sake of completeness, a self-contained proof is included in Appendix \ref{subsec:projSpanSuffCondAppendix}.

\begin{remark}\label{rem:so-remark}
In the theory of Lie algebra actions on manifolds, the condition \eqref{eq:proj-trans} means that $\mathfrak{m}_x$ acts transitively on $\S_x M$ through the Lie algebra action $A\mapsto V_A$. It is straightforward to show that the Lie algebra $\mathfrak{so}(T_xM)$ of skew-symmetric linear operators (depending on the metric) also acts transitively on $\S_xM$, and therefore a sufficient condition for transitive action of $\mathfrak{m}_x(X_0;X_1,\ldots,X_r)$ on $\S_xM$ is that
\[
	\mathfrak{so}(T_xM)\subseteq \mathfrak{m}_x(X_0;X_1,\ldots,X_r).
\]
\end{remark}

\subsubsection{Generator for the projective process}

We close this preliminary section with some brief comments on the generator for the projective process. Observe that the infinitesimal generator for $(w_t)$ on $\S M$ has the following H\"ormander form: 
\[
	\tilde{\mathcal{L}} = \tilde{X}_0 + \frac{1}{2}\sum_{k}\tilde{X}_k^2 \, . 
\]
In particular, when they exist, stationary densities $f = \frac{\dee \nu}{\dee q}$ solve the Kolmogorov equation
\begin{align}
	\tilde{\mathcal{L}}^*f  = \tilde{X}_0^*f + \frac{1}{2}\sum_{k=1}^r (\tilde{X}_k^*)^2 f = 0 \, , \label{def:Lstar}
\end{align}
where $\dq$ denotes the Riemannian volume on $\S M$ and for a given vector field $\tilde{X}$ on $\S M$, $\tilde{X}^*$ denotes the formal adjoint operator with respect to $L^2(\dq)$. Note that the differential operator $\tilde{X}^*$ can be related to $\tilde{X}$ and $\Div \tilde X$ through the relation
\begin{equation}\label{eq:X-*-div-id}
	\tilde{X}^*h = - \tilde{X}h - (\Div \tilde{X})h, \quad h\in C^\infty_c(\S M) \, . 
\end{equation}

\section{Fisher information formula for Lyapunov exponents}\label{sec:fishStatementNew}

Here we give a full, general statement of our first main result, a version of Theorem \ref{thm:fishInfoIntro} on manifolds relating the values $\lambda_1, \lambda_\Sigma$ to the partial Fisher information
\[
FI(\phi) := \frac{1}{2}\sum_{k=1}^r \int_{\S M} \frac{|\tilde{X}_k^*\phi|^2}{\phi} \,\dee q,
\]
for $\phi = f$, the density of the stationary measure $\nu$ of $(w_t)$, and for $\phi = \rho$, the 
density of the stationary measure $\mu$ for $(x_t)$ (abusing notation somewhat and viewing $\rho$ as a function on $\S M$). 

\begin{remark}
Note that $FI(\phi)$ is well defined even when the density $\phi$ is not supported everywhere. Indeed, it can be re-written as 
\[
	FI(\phi) = \frac{1}{2}\sum_{k=1}^r\int_{\S M} |X_k\log \phi + \Div X_k|^2 \phi\, \dq,
 \]
 which is implicitly integrated over $\{\phi>0\}$ so that $\log \phi$ is well-defined (see Remark \ref{rem:log-div} below for how this logarithmic gradient form relates to metric independence of the Fisher information).
\end{remark}
Throughout, the setting is as laid out in the beginning of Section \ref{sec:FI}. 

In what follows, for a given partition of unity $\{\chi_i\}_{i \geq 1}$ of $\S M$, we denote $\chi_{\leq N} = \sum_{i=1}^{N}\chi_i$.

\begin{proposition}[Fisher Information Identity] \label{prop:FI-gen-intro}
Let Assumptions \ref{ass:WP} and \ref{ass:WP2} hold. Moreover, assume that 
\begin{enumerate}
\item[(a)] there exists a smooth partition of unity $\{\chi_i\}_{i \geq 1}$ of $\S M$ and a measurable function $m:\S M\to \R_+$ such that $|\tilde{\mathcal{L}}\chi_{\leq N}(w)| \leq m(w)$ and 
\[
	\int_{\S M} f|\log f|m\,\dee q < \infty;
 \]
\item[(b)] $Q \in L^1(\mu)$ and $\tilde{Q}\in L^1(\nu)$, where $Q, \tilde{Q}$ are defined by 
\begin{align*}
	Q(x) & := \Div X_0(x) + \frac12 \sum_{k = 1}^r X_k \Div X_k(x) \, , \\
	\tilde{Q}(w) & := \Div \tilde{X}_0 (w) + \frac{1}{2}\sum_{k=1}^r\tilde{X}_k\Div \tilde{X}_k(w) \, .
\end{align*}
\end{enumerate}
Then, the following formulas hold: 
\begin{align}\label{eq:gen-FI-ID}
\begin{aligned}
	FI(\rho) &= -\int_{M} Q \, \dee \mu = - \lambda_\Sigma \, , \\
	FI(f) & = - \int_{\S M} \tilde{Q}\, \dee \nu = n\lambda_1 - 2\lambda_{\Sigma} \, .
\end{aligned}
\end{align}
Equivalently, writing $h_x(v) = f(x,v)/ \rho(x)$ for the conditional density on $\S_xM$, we have
\begin{align}
FI(f) - FI(\rho) = \frac{1}{2}\sum_{k=1}^r\int_{M}\left(\int_{\S_xM}\frac{|(X_k - V_{\nabla X_k(x)}^*) h_x(v)|^2}{h_x(v)}\dee v\right) \dee\mu(x) = n\lambda_1 - \lambda_{\Sigma}.  \label{eq:FIcond}
\end{align}
\end{proposition}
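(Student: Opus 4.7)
The plan is to establish the three equalities in turn. For the identity $FI(\rho) = -\int Q\,\dee \mu$, I would use an entropy-dissipation argument. Stationarity of $\rho$ under the Stratonovich generator $\cL = X_0 + \tfrac12\sum_k X_k^2$ gives $\int_M \rho\,\cL(\log\rho)\,\dee x = 0$, and the identity $X_k^2\log\rho = X_k^2\rho/\rho - (X_k\rho)^2/\rho^2$ together with integration by parts applied to $\int X_0\rho\,\dee x$ and $\int X_k^2\rho\,\dee x$ yields
\[
\tfrac12\sum_k \int \frac{(X_k\rho)^2}{\rho}\,\dee x = -\int\Div X_0\,\dee\mu + \tfrac12\sum_k\int\bigl(X_k\Div X_k + (\Div X_k)^2\bigr)\,\dee\mu.
\]
A separate expansion $|X_k^*\rho|^2 = (X_k\rho + \rho\,\Div X_k)^2$ with integration by parts rewrites $FI(\rho) = \tfrac12\sum_k\int|X_k^*\rho|^2/\rho\,\dee x$ in terms of $\tfrac12\sum_k\int(X_k\rho)^2/\rho\,\dee x$ minus $\sum_k\int X_k\Div X_k\,\dee\mu$ and $\tfrac12\sum_k\int(\Div X_k)^2\,\dee\mu$. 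Substituting one into the other and cancelling the $(\Div X_k)^2$ terms yields $FI(\rho) = -\int Q\,\dee\mu$. To match with $-\lambda_\Sigma$, I would apply It\^o's formula to $\log|\det D\Phi^t|$: the Liouville-type identity $\dee \log|\det D\Phi^t| = \Div X_0\,\dt + \sum_k \Div X_k\,\strat\dee W^k$ combined with Stratonovich-It\^o conversion gives $\log|\det D\Phi^t| = \int_0^t Q(x_s)\,\ds + M_t$ for a martingale $M_t$, and the Birkhoff ergodic theorem then forces $\lambda_\Sigma = \int Q\,\dee\mu$.

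For $FI(f) = -\int\tilde Q\,\dee\nu = n\lambda_1 - 2\lambda_\Sigma$, the same entropy-dissipation computation on the projective SDE \eqref{eq:spherebundleeq} immediately gives $FI(f) = -\int\tilde Q\,\dee\nu$. Matching with $n\lambda_1 - 2\lambda_\Sigma$ requires two further ingredients. First, under the horizontal-vertical splitting $T_{(x,v)}\S M = T_xM \oplus T_v\S_xM$ induced by the Levi-Civita connection, the product structure of $\dee q$ yields
\[
\Div_{\S M}\tilde X_k(x,v) = \Div X_k(x) + \Div_{\S_xM} V_{\nabla X_k(x)}(v) = 2\Div X_k(x) - n\brak{v,\nabla X_k(x)v},
\]
where the vertical computation $\Div_{\S_xM}V_A(v) = \tr A - n\brak{v,Av}$ follows by radially extending $V_A$ to $T_xM\setminus\{0\}$ and computing the ambient Euclidean divergence. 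Second, It\^o's formula for $\log|D\Phi^t(x)v|$ gives, after Stratonovich-It\^o conversion and invoking ergodicity of $(w_t)$ on $\S M$,
\[
\lambda_1 = \int_{\S M}\Bigl[\brak{v,\nabla X_0(x)v} + \tfrac12\sum_k \tilde X_k\brak{v,\nabla X_k(x)v}\Bigr]\,\dee\nu.
\]
Combining these with $\lambda_\Sigma = \int Q\,\dee\nu$ (since $\mu$ is the marginal of $\nu$ on $M$) and noting $\tilde X_k\Div X_k = X_k\Div X_k$ (no $v$-dependence to differentiate), a term-by-term comparison confirms $n\lambda_1 - 2\lambda_\Sigma = -\int\tilde Q\,\dee\nu$.

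For the conditional identity \eqref{eq:FIcond}, I would substitute $f = \rho h$ into $\tilde X_k^* f$; splitting the adjoint into horizontal and vertical pieces produces
\[
\tilde X_k^*(\rho h) = h\,X_k^*\rho + \rho\,(V^*_{\nabla X_k} - X_k)h,
\]
where $V^*_{\nabla X_k}$ is the adjoint on $\S_xM$. Squaring, dividing by $f = \rho h$, and integrating first over $\S_xM$ and then over $M$, the cross term vanishes because $\int_{\S_xM}V^*_{\nabla X_k}h\,\dee v = 0$ (adjoint tested against the constant $1$) and $X_k\int_{\S_xM}h\,\dee v = X_k(1) = 0$. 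The diagonal contributions then yield $FI(\rho)$ plus the fiber-averaged conditional Fisher information, and the right-hand side $n\lambda_1 - \lambda_\Sigma$ follows by subtracting the first identity from the second.

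The principal obstacle is the geometric bookkeeping on $\S M$: setting up the horizontal-vertical splitting via the Levi-Civita connection so that $\Div_{\S M}\tilde X_k$ decouples cleanly, computing $\Div_{\S_xM} V_A$ intrinsically, and justifying the pull-out of $X_k$ through fiber integrals (which uses volume-preservation of parallel transport). The remaining technicalities — enough integrability to justify the integration by parts and the admissibility of $\log\rho,\log f$ as test functions (guaranteed by $f\log f \in L^1(\dee q)$, $Q\in L^1(\mu)$, $\tilde Q\in L^1(\nu)$), a mollification argument if the densities are not a priori smooth enough, and the vanishing of the martingale time-average in the ergodic limit — are routine once the geometric framework is in place.
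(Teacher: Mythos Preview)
Your proposal is correct and follows essentially the same route as the paper: an entropy-dissipation computation (pairing the stationary Kolmogorov equation with $\log$ of the density) to obtain $FI(\rho)=-\int Q\,\dee\mu$ and $FI(f)=-\int\tilde Q\,\dee\nu$, the Furstenberg--Khasminskii formula via It\^o on $\log|\det D\Phi^t|$ and $\log|D\Phi^t v|$ together with the divergence identity $\Div\tilde X_k = 2\Div X_k - n\langle v,\nabla X_k v\rangle$, and the factorization $f=\rho h$ with the cross term killed by fiber integration for \eqref{eq:FIcond}. The only cosmetic difference is that the paper integrates by parts directly on $\int(\log f)(\tilde X_k^*)^2 f\,\dee q$ rather than first expanding $|X_k^*\rho|^2$ and substituting, and it handles the localization via a cutoff $\chi_R$ together with $f\log f\in L^1$ and dominated convergence, exactly as you anticipate in your final paragraph.
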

In each line of \eqref{eq:gen-FI-ID}, the second equality is a version of the famous Furstenberg-Khasminskii formula (see, e.g., \cite{khasminskii2011stochastic, arnold1995random}) for the Lyapunov exponents of an SDE satisfying Assumptions \ref{ass:WP} and \ref{ass:WP2}; for the convenience of the reader we recall this formula in Lemma \ref{lem:FK} in the Appendix. Assumptions (a) and (b) are technical and needed only when $M$ is noncompact to justify a certain integration-by-parts step; see below for details. 

After some remarks, Proposition \ref{prop:FI-gen-intro} is derived below. We give two proofs: the first (Section \ref{subsec:FI-proof}) 
is a combination of the Furstenberg-Khasminskii formula \cite{khasminskii2011stochastic, arnold1995random}  
with the Kolmogorov equation \eqref{def:Lstar} for $f$; the second proof (Section \ref{subsec:relativeEnt}) is a sketch
connecting $FI(f)$ to a certain relative entropy formula for Lyapunov exponents \cite{furstenberg1963noncommuting, baxendale1989lyapunov} (see also \cite{ledrappier1986positivity}) 
intimately connected with Furstenberg's criterion (see Section \ref{Sec:Context}).

\begin{remark}
Equation \eqref{eq:FIcond} is an equivalent formulation highlighting the relation to the natural quantity $n \lambda_1 - \lambda_\Sigma$ and similarities to Furstenberg's criterion for the Lyapunov exponents of stochastic systems (Section \ref{Sec:Context}). In particular, note that one always has $FI(f) - FI(\rho) \geq 0$ and that $n \lambda_1 > \lambda_\Sigma$ if and only if $FI(f) - FI(\rho) > 0$.
\end{remark}

\begin{remark}\label{rem:log-div}
We emphasize that the Fisher information $FI(f)$ does not depend on the choice of Riemannian 
metric $g$ on $M$, as one would expect: after all, it is straightforward to check that 
Lyapunov exponents themselves are independent of the
Riemannian metric $g$ when $M$ is compact or under a mild integrability condition when $M$ is noncompact. 

To see this, we will cast $FI(f) = \int_{\S M}(|\tilde{X}_k^*f|^2/f)\,\dee q$ as a {\em logarithmic derivative} of $\nu$ with respect to $\widetilde{X}_k^*$. 
To make this precise, define the weak derivative $\widetilde{X}_k^*\nu$, a signed measure on $\S M$, by duality (c.f. Chapter 3 of \cite{Bogachev1997-wz}) via 
\[
	\int \phi \,\dee(\widetilde{X}_k^*\nu) = \int \widetilde{X}_k\phi \,\dee \nu, \quad \phi \in C^\infty_c(\S M) \,. 
\]
When $\widetilde{X}_k^*\nu \ll \nu$, the Radon-Nikodym derivative $\beta_{\widetilde{X}_k^*}^\nu := \frac{\dee \tilde X^*_k \nu}{\dee \nu}$ exists and satisfies 
\[
	\beta_{\widetilde{X}_k^*}^\nu = \frac{\widetilde{X}_k^*f}{f} = -X_k\log f - \Div X_k,
	\] when $\dee\nu = f\dq$, as we assume in this paper. This means that we can write the Fisher information as
\[
	FI(f) = \frac{1}{2}\sum_{k=1}^r \|\beta_{\widetilde{X}_k^*}^\nu\|^2_{L^2(\nu)},
\]
which has no explicit dependence on the Riemannian metric $\tilde{g}$.
Remarkably, this form makes sense even when $\nu$ is not absolutely continuous with respect to $\dq$ and 
 only seems to requires differentiability of the measure along $\widetilde{X}_k$. This suggests the potential extension of
  Proposition \ref{prop:FI-gen-intro} to infinite dimensional settings
where no notion of volume measure $\dee q$ is available. However, this is 
beyond our scope here and left to future work. 
\end{remark}

\begin{remark}
The Fisher information is a fundamental quantity in the theory of statistical inference and information geometry (see \cite{Amari2000-ik}). It is typically used to measure the amount of information a parametrized family of laws (e.g., the law of one variable conditioned on the value of another) carries about the inference parameter. In our case, the Fisher information formula \eqref{eq:FIcond} can be interpreted as the amount of information about $x$, on average, which can be inferred by making observations on the family of conditional densities $\{h_x(\cdot)\}_{x \in M}$ in the $v$ coordinate. 
\end{remark}

\subsection{Proof of Proposition \ref{prop:FI-gen-intro}}\label{subsec:FI-proof}

We begin with a formal argument.  
Pairing the Kolmogorov equation \eqref{def:Lstar} for $f$ with $\log{f}$ and integrating gives
\[
	- \frac{1}{2}\sum_{k=1}^r \int_{\S M} (\log f) (\tilde{X}_k^*)^2 f \,\dee q = \int_{\S M} (\log f) \tilde{X}_0^* f \,\dee q. 
\]
Ignoring for the moment that $f$ is not compactly supported, integrating by parts and using \eqref{eq:X-*-div-id} gives for the left hand side
\[
\begin{aligned}
- \frac{1}{2}\sum_{k=1}^r \int_{\S M} (\log f) (\tilde{X}_k^*)^2 f\, \dee q &= -\frac{1}{2}\sum_{k=1}^r \int_{\S M} \frac{(\tilde{X}_kf)(\tilde{X}_k^* f)}{f}\, \dee q\\
&= FI(f) + \frac{1}{2}\sum_{k=1}^r\int_{\S M} (\Div \tilde{X}_k) X_k^* f\dee q\\
 &= FI(f) + \frac{1}{2}\sum_{k=1}^r\int_{\S M}(\tilde{X}_k\Div \tilde{X}_k)\,f\,\dee q \, .
\end{aligned}
\]
For the right hand side integration by parts gives
\[
	\int (\log f) \tilde{X}_0^* f \dee q = \int \tilde{X}_0f \dee q = - \int (\Div \tilde{X}_0)f\dee q,
\]
where we used the fact that $\int_{\S M} \tilde{X}_0^* f \dee q = 0$.
Putting these two identities together yields $FI(f) = - \int \tilde{Q}\dee \nu$ and therefore \eqref{eq:gen-FI-ID}. The formula for $FI(\rho)$ with $\rho = \frac{\dee \mu}{\dee x}$ the stationary density for $(x_t)$, follows from an identical argument, omitted for brevity, once one observes that $\rho$ solves the Kolmogorov equation
\begin{align*}
X_0^\ast \rho + \frac{1}{2}\sum_{j=1}^r (X_j^*)^2 \rho = 0. 
\end{align*}

Making the above argument rigorous will require us to justify the integration-by-parts steps, which along the way will make use of hypothesis (a) regarding integrability of $f\log f$ with respect to $m \dq$. Let $\{\chi_i\}_{i=1}^\infty$ be the partition of unity as in (a) and recall that $\chi_{\leq N} := \sum_{i=1}^N\chi_i$. Multiplying both sides of \eqref{def:Lstar} by $(\log f)\chi_{\leq N}$ and repeating the computation above gives
\[
	\frac{1}{2}\sum_k \int_{\S M} \frac{|\tilde{X}_k^* f|^2}{f} \chi_{\leq N}\, \dee q  = - \int_{\S M} \tilde{Q} f\chi_{\leq N}\,\dee q+  \int_{\S M} (\tilde{\mathcal{L}}\chi_{\leq N}) (f\log f - f)\,\dee q.
\]
As $N \to \infty$, the LHS converges to $FI(f)$ by the monotone convergence theorem. 
The first RHS term converges to $- \int_{\S M} \tilde Q \dee \nu$ by the dominated convergence theorem, using  that $\tilde Q \in L^1(\nu)$ (hypothesis (b)). Finally, the second RHS term above converges to 0 using 
the dominated convergence theorem and the facts that $|\tilde {\cL} \chi_{\leq N}| \leq m$
and $(f \log f - f) m \in L^1(\dee q)$ (hypothesis (a)).

It remains to check the conditional version \eqref{eq:FIcond}. Here we give only the formal proof, which can be justified under hypotheses (a) and (b) using an argument parallel to that presented above. 
For this we observe that 
\[
	\tilde{X}^*_k (h\rho) =(V_{\nabla X_k}^*h - \tilde{X}_kh)\rho + (X_k^*\rho)h
\]
and therefore since $\int_{\S_xM} h_x(v) \dee v=1$ (writing $\dee v$ for Lebesgue measure on $\S_x M$)  we find
\begin{align*}
FI(f) - FI(\rho) & = \frac{1}{2}\sum_{k=1}^r \int_{\S M} \frac{|(V_{\nabla X_k}^*h - \tilde{X}_k h)\rho + (X_k^*\rho)h|^2}{h\rho} \, \dee q - \frac{1}{2}\sum_{k=1}^r\int_{\S M}\frac{|X_k^*\rho|^2}{\rho}h\dq\\
&= \frac{1}{2}\sum_{k=1}^r\int_{M}\left(\int_{\S_xM}\frac{|(X_k - V_{\nabla X_k(x)}^*) h_x(v)|^2}{h_x(v)}\dee v\right) \dee\mu(x)\\
&\hspace{1in} + \int_{\S M} (V^\ast_{\grad X_k}h - X_kh) \left( X_k^\ast \rho \right) \, \dee q.
\end{align*}
Meanwhile, for the second term on the RHS, we have
\[
\int_{\S M} (V^\ast_{\grad X_k}h - X_kh) \left( X_k^\ast \rho \right) \, \dee q = -\int_{\S M}  X_k h \left( X_k^\ast \rho \right) \, \dee q = -\int_{\S M}  h (X_k^\ast) ^2 \rho  \, \dee q   = 0 \, , 
\]
completing the proof of Proposition \ref{prop:FI-gen-intro}.

\subsection{Relation to Baxendale's relative entropy formula}\label{subsec:relativeEnt}

This section is purely for exposition purposes to highlight the connection between the Fisher information identity in Proposition \ref{prop:FI-gen-intro} and the relative entropy formula \eqref{eq:RE}. Readers who are not interested may skip to Section \ref{sec:Hor}. 

In this section we give an argument of the Fisher information identity from the relative entropy formula \eqref{eq:RE} measuring the \emph{degree to which the degeneracy} \eqref{furstDegen} fails to hold. We already have given a complete proof above, and so our purpose here is provide some additional intuition behind the meaning of Proposition \ref{prop:FI-gen-intro}. For this reason, we will not provide results in their full generality, and for technical simplicity we only consider the case in which $M$ is compact and boundaryless (recall that Proposition \ref{prop:FI-gen-intro} holds for noncompact $M$). 
       
In preparation, recall that given two measures $\lambda, \eta$ on $\S M$, $\eta \ll \lambda$, we write $H(\eta | \lambda)$ for the relative entropy of $\eta$ given $\lambda$, defined as in \eqref{eq:defnRelEntropyIntro} above. In what follows, we abuse notation somewhat and write $H( f | g)$ for the relative entropy of $f \dq$ given $g \dq$ when $f, g$ are densities on $\S M$. Recall that $H(f | g) = 0$ iff
$f \equiv g$ almost everywhere. 

In what follows, we let $\hat \Phi^t_\omega$ be the stochastic flow of diffeomorphisms on $\S M$ induced by the SDE governing the projective process $(w_t)$. Given a smooth density $f\in L^1(\S M)$, let
\[
	f_t := (\hat{\Phi}^t_\omega)_*f = f\circ  (\hat{\Phi}^t_\omega)^{-1} |\det D(\hat{\Phi}^t_\omega)^{-1}|
\]
be the pushforward of $f$ as a density on $\S M$, and analogously, $\rho_t := (\Phi^t_\omega)_* \rho$. 
In \cite{baxendale1989lyapunov}, Baxendale derived the following continuous-time version of a 
formula derived by Furstenberg \cite{furstenberg1963noncommuting} in the context of IID compositions of random matrices. 
\begin{theorem}[Baxendale \cite{baxendale1989lyapunov}]\label{thm:baxFormulaSec2}
Under Assumptions \ref{ass:WP} \& \ref{ass:WP2}, one has the following:  
\begin{align}\label{eq:baxEntForm1}
\begin{aligned}
\E H(\rho_t | \rho) & = - t \lambda_\Sigma \, , \\
\E \left(H( f_t | f ) - H(\rho_t | \rho) \right) & = t (n \lambda_1 - \lambda_\Sigma) \, .
\end{aligned}
\end{align}
\end{theorem}
It is straightforward to show that the second line of \eqref{eq:baxEntForm1} is an equivalent formulation of \eqref{eq:RE}, with ``$\leq$'' replaced by ``='', and directly encodes the Furstenberg
criterion \eqref{furstDegen}; that is, $n \lambda_1 - \lambda_\Sigma = 0$ only if $f_t \equiv f$ for all $t$ and with full probability. 

We will provide here an independent argument showing the following: 
\begin{proposition}\label{prop:FIintTimeAnalogue}
Assume $M$ is compact and boundaryless. Then, 
\begin{align}\label{eq:FItimeIntAnalogue}
	FI(\rho) = \frac{1}{t} \E H(\rho_t | \rho) \, , \quad \text{ and } \quad 
	FI(f)  = \frac{1}{t} \E H(f_t|f)  \, .  
\end{align}
\end{proposition}
Proposition \ref{prop:FI-gen-intro} for compact $M$ immediately follows. The identity \eqref{eq:FItimeIntAnalogue} confirms that the Fisher information identity for Lyapunov exponents is the time-infinitesimal analogue of \eqref{eq:baxEntForm1}. 

\begin{proof}[Proof sketch for Proposition \ref{prop:FIintTimeAnalogue}]
We present here the proof for $f_t$; the proof for $\rho_t$ follows the same lines and is omitted. 

The density $f_t$ can be readily seen to solve the stochastic continuity equation
\[
	\dee f_t = \tilde{\mathcal{L}}^* f_t \,\dt + \sum_{k=1}^r \tilde{X}_k^* f_t \, \dee W^k_t \, , 
\]
in It\^{o} form, while $\bar{f}_t = \E f_t$ solves the forward Kolmogorov equation
\[
	\partial_t \bar{f}_t = \tilde{\mathcal{L}}^*\bar{f_t} \, .
\]
Therefore, since the initial data $\bar{f}_0 = f$ is a stationary density satisfying $\tilde {\mathcal L}^* f = 0$, we have $\E f_t = f$ for all $t\geq 0$. Using the formula for $f_t$, we can apply It\^{o}'s lemma to obtain the following stochastic equation that holds pointwise on $\S M$:
\[
	f_t\log(f_t/f) = \frac{1}{2}\sum_{k}\int_0^t\frac{|X_k^*f_s|^2}{f_s}\ds +\int_0^t(\tilde{\mathcal{L}}^* f_s) \left(\log\left(f_s/f\right) + 1\right)\ds+  M_t \, ; 
\]
here, the first RHS term is the It\^{o} correction and $M_t$ is a mean-zero martingale whose exact form is not important. Integrating over $\S M$ and averaging with respect to $\E$ gives
\begin{equation}\label{eq:relative-entropy-dissipation}
	\frac{1}{t}\E H(f_t| f) = \frac{1}{t}\E \int_0^t FI(f_s)\ds + \frac{1}{t}\E \int_0^t \int_{\S M} f_s \,\tilde{\mathcal{L}}\log\left(f_s/f\right)\dq \ds,
\end{equation}
having used duality to bring $\tilde {\mathcal{L}}$ onto the $\log$. A standard calculation (using the fact that $\tilde{\mathcal{L}}^* f =0$) implies the second term on the right-hand side is just the relative entropy dissipation $FI(f_s|f)$ (or relative Fisher information)
\[
	\int_{\S M} f_s \,\tilde{\mathcal{L}}\log\left(f_s/f\right)\dq = -FI(f_s|f) := -\frac{1}{2}\sum_k\int_{\S M} \frac{\left|X_k\left(f_s/f\right)\right|^2}{f_s/f}\,f \,\dee q.
\]
Moreover by a straightforward manipulation involving basic calculus and integration by parts, $FI(f_s|f)$ can be expressed as
\[
	FI(f_s|f) = FI(f_s) - \frac{1}{2}\int_{\S M} \frac{|X_k^*f|^2}{f^2} f_s\,\dq - \int_{\S M} X_k^2\left(\frac{f_s}{f}\right) f\,\dee q.
\]
Using Fubini and that $\E f_s = f$ (equivalent to the fact that $f$ is stationary), hence $\E X_k^2\left(f_s/ f\right) = 0$, gives
\[
	\E FI(f_s|f)= \E FI(f_s) - FI(f).
\]
Substituting this into \eqref{eq:relative-entropy-dissipation} gives the result.
\end{proof}

\section{Uniform hypoelliptic regularity via the Fisher information} \label{sec:Hor}

We now turn our attention to the proof of Theorem \ref{thm:unifHypoRegEstIntro},
which estimates the Fisher information quantity $FI(f)$ from below in terms of 
fractional Sobolev regularity. 

\subsection{Full statement of Theorem \ref{thm:unifHypoRegEstIntro}}

As discussed in Section \ref{sec:Intro}, our results are stated 
for parametrized families of SDE of the form
\begin{align}\label{eq:smallNoiseSDESec1}
\dee x_t^\eps = X_0^\eps(x_t^\eps)\,\dt + \sqrt{\eps} \sum_{k=1}^r X_k^\eps(x_t^\eps)\strat\dee W^k_t \, ,
\end{align}
where for each $\epsilon \in (0,1]$, the vector fields $\{ X_k^\epsilon\}_{k = 0}^r$ on $M$ are smooth. 
The following is a ``uniform-in-$\eps$'' version of the parabolic H\"ormander condition (Definition \ref{def:Hormander}). 

\begin{definition}[Uniform parabolic H\"ormander] \label{def:UniHormander}
Let $\cM$ be a manifold, and let $\set{Z_0^\epsilon,Z_1^\epsilon,...,Z_r^\epsilon} \subset \mathfrak{X}(\cM)$ be a set of vector fields parameterized by $\epsilon \in (0,1]$.
  With $\mathscr{X}_k$ defined as in Definition \ref{def:Hormander} we say $\set{Z_0^\epsilon,Z_1^\epsilon,...,Z_r^\epsilon}$ satisfies the uniform parabolic H\"ormander condition on $\cM$ if $\exists k \in \mathbb N$, such that for any open, bounded set $U \subseteq \cM$ there exist constants $\set{K_n}_{n=0}^\infty$, such that for all $\epsilon \in (0,1]$ and all $x \in U$, there is a finite subset $V(x) \subset \mathscr{X}_k$ such that $\forall \xi \in T_x \cM$,
\begin{align*}
\abs{\xi} \leq K_0 \sum_{Z \in V(x)} \abs{Z(x) \cdot \xi} \qquad \sum_{Z \in V(x)}\norm{Z}_{C^n} \leq K_n. 
\end{align*}
Equivalently, in the notation of Section \ref{subsubsec:brackSpanProjGen2}, we could stipulate
that the subset $V(x)$ be drawn from $\mathrm{Lie}^k(Z_0^\eps; Z_1^\eps, \cdots, Z_r^\eps)$
for some fixed $k \geq 1$. 
\end{definition}

In what follows, we will assume that the SDE defining the projective process $w_t = (x_t, v_t)$ satisfies
this uniform spanning condition: 

\begin{assumption}[Projective spanning condition]\label{ass:PHC}
  The vector fields $\{ \tilde X_0^\epsilon, \tilde X_1^\epsilon, \cdots, \tilde X_r^\epsilon\}$ satisfy the parabolic H\"ormander condition on $\S M$  uniformly in $\epsilon \in (0,1]$ on bounded sets in the sense of Definition \ref{def:UniHormander}. 
\end{assumption}

We are now positioned to state the general version of Theorem \ref{thm:unifHypoRegEstIntro}
relating Fisher information and weak Sobolev regularity. 
Below, $B_r(w_0; \S M)$ is the geodesic ball of radius $r > 0$ centered at $w_0 \in \S M$. 
\begin{theorem} \label{thm:HypoFI}
Let $(M,g)$ be as in the beginning of Section \ref{sec:FI}, and let $\{ X_0^\eps, \cdots, X_r^\eps\}$
be smooth vector fields. Assume that for all fixed $\eps \in (0,1]$, these vector fields satisfy 
Assumptions \ref{ass:WP}, \ref{ass:WP2}, and that the parametrized family $\{ X_k^\eps\}_{\eps \in (0,1]}$
satisfies Assumption \ref{ass:PHC}.
Let $f^\epsilon$ denote the unique stationary density for the
projective process $w_t = (x_t, v_t)$ on $\S M$, and assume that for all $\epsilon \in (0,1]$ we have
$FI(f^\epsilon) < \infty$. 
Then $\exists s_* \in (0,1)$ such that $\forall s\in (0,s_*)$, $\forall R >0$, $\forall w_0 \in \S M$, $\exists C > 0$ such that
$\forall \ep \in (0,1]$,
\begin{align}
\norm{\chi_R f^\ep}_{W^{s, 1}}^2  \leq C\left(1 + FI(f^\ep) \right), 
\end{align}
where $\chi_R$ is a smooth cutoff function equal to $1$ on $B_R(w_0; \S M)$ and zero on $\S M \setminus B_{R+1}(w_0; \S M)$. 
Moreover, the constant $C$ can be chosen to depend only on $s,R, w_0, \dim M$ and the constants $k$ and $\set{K_n}_{n=0}^J$ (for a $J$ depending only on $k$) in Definition \ref{def:UniHormander}.
\end{theorem}
The definition of fractional Sobolev space $W^{s, 1}$ is recalled for functions on manifolds in \eqref{def:Wsp}. 

\begin{remark}
Theorem \ref{thm:HypoFI} requires only a uniform H\"ormander condition concerning spanning of
the Lie algebra $\mathrm{Lie}(X_0^\eps, \cdots, X_r^\eps)$ rather than the slightly stronger parabolic H\"ormander condition concerning the zero-time ideal $\mathrm{Lie}(X_0^\eps; X_1^\eps, \cdots, X_r^\eps)$.
However, the parabolic H\"ormander condition is required anyway for other purposes such as  
verifying Assumption \ref{ass:WP2}, c.f. the discussion in Section \ref{sec:FI}.
\end{remark} 

\begin{remark}
Above, the value $s_*$ is determined exclusively by the number of `generations' of brackets needed to satisfy Assumption \ref{ass:PHC} (though note this will generally depend on the dimension of the manifold $M$ itself). One can check that the proof gives $\frac{1}{2k} < s_* \leq \frac{1}{k}$ where $k$ is as in Definition \ref{def:UniHormander}, being the maximum depth of brackets needed to uniformly span.
\end{remark}

\begin{remark}
Lastly, the hypoelliptic lower bound Theorem \ref{thm:unifHypoRegEstIntro} can potentially be improved.
In the proof of the theorem below, the duality semi-norm  $\mathfrak{D}(f)$ is of size $O(\eps \sqrt{FI})$.
In analogy with other kinds of interpolation inequalities, it is reasonable to seek a version of the H\"ormander inequality \eqref{ineq:wLs} that gives extra weight to $\mathfrak{D}$ relative to $FI$, i.e. something along the lines of $\norm{\chi_U f}_{W^{s,1}}^2 \lesssim 1 + \eps^{-\alpha} \mathfrak{D}(f)^2 + \eps^{\beta}FI(f) \lesssim 1 + \frac{n\lambda_1 - 2\lambda_\Sigma}{\eps^{1-\gamma}}$, thus improving the scaling available to the compactness-rigidity scheme.
Perhaps in this scheme one can use a weaker regularity on the left-hand-side but nevertheless is good enough to prove equi-integrability (all that is really required for the compactness-rigidity approach).
\end{remark}

The remainder of this section is devoted to proving Theorem \ref{thm:HypoFI}.  

\subsubsection{Notation and setup}

Theorem \ref{thm:HypoFI} is not directly related to the projective process, and so for the sake
of conceptual clarity we present a proof here establishing a more general version. Throughout
the remainder of Section \ref{sec:Hor}, we will assume $(\cM, \tilde g)$ is a smooth, connected, 
geodesically complete, orientable manifold without boundary. We write $d = \dim \cM$ and 
$\dq$ for the volume element on $\cM$. 
The geodesic ball of radius $r > 0$ at a point $x \in \cM$ is written $B_r(x; \cM)$, while 
we write $B_r(0) \subset \R^d$ for the Euclidean ball of radius $r$. 

Abusing notation somewhat, we will assume $\{ X_k^\epsilon\}_{k = 0}^r, \epsilon \in (0,1],$
is a family of smooth vector fields on $\cM$ satisfying the uniform parabolic H\"ormander
condition (Definition \ref{def:UniHormander}), while $f^\epsilon$ is any\footnote{Uniqueness of $f^\epsilon$ is not used in the proof of Theorem \ref{thm:HypoFI} and so we need not assume any 
topological irreducibility property of the corresponding SDE. } smooth solution of the 
Kolmogorov equation 
\begin{align}
(X_0^\epsilon)^\ast f^\epsilon + \frac{\epsilon}{2}\sum_{j=1}^r ((X_j^\epsilon)^*)^2 f^\epsilon = 0 \, .\label{eq:gen}
\end{align} 

Recall that given $X, Y \in \mathfrak X(\cM)$, the adjoint action of $X$ on $Y$ is defined through the Lie bracket $\ad (X) Y = [X,Y]$. For a multi-index $I = (i_1,...,i_k), i_j \in \{ 0,\cdots, r\}$ for each $1 \leq j \leq k$, we denote
\begin{align}
X_I = \ad (X_{i_1}) \ldots \ad (X_{i_{k-1}}) X_{i_k}. 
\end{align}
In what follows, set $s_0 = \frac{1}{2}$, $s_j = 1$, $1 \leq j \leq r$ and for a multi-index $I = (i_1,...,i_k)$ we write, 
\begin{align}
m(I) := \frac{1}{s(I)} := \sum_{j=1}^k \frac{1}{s_{i_j}}. 
\end{align}
Note that $m(I)$ provides a measure of how ``deep'' a bracket is (i.e. the larger $m(I)$ the more brackets that were taken), weighted in a way that will be consistent with available regularity. 

We denote by $\mathfrak{X}^s(\cM) \subset \mathfrak{X}(\cM)$ the $C^\infty(\cM)$-submodule of vector fields generated from successive brackets with $s  \leq s(I)$, that is,  
\begin{align*}
\mathfrak{X}^s(\cM) = \Bigg\{Z \in \mathfrak{X}(\cM) : Z = \sum_j h_j X_{I_j}, \quad s(I_j) \geq s, \, h_j \in C^\infty(\mathcal{M}) \Bigg\}. 
\end{align*}
Recall that $\set{X_j}_{j=0}^r = \{X_j^\epsilon\}_{j = 0}^r \subset \mathfrak{X}(\cM)$ depend in a general manner on a parameter $\epsilon \in (0,1)$, hence $\mathfrak X^s(\cM)$ also depends on $\epsilon$.
However, assuming that $\{ X_j^\epsilon\}$ satisfies the uniform parabolic H\"ormander condition, a simple consequence is that  $\exists s > 0$ such that $\forall \epsilon \in (0,1)$, $\mathfrak{X}^s(\cM) = \mathfrak{X}(\cM)$ \footnote{Indeed, if $\set{Z_1,...,Z_m}$ span $T_w \cM$ then $\exists \delta > 0$ such that $\forall v \in B_\delta(w) = \set{v \in \cM :d(w,v) < \delta}$ the same vector fields span, and so for $V \in \mathfrak{X}(\cM)$, $\exists c_j \in C^\infty$ such that $V = \sum c_j Z_j$ on $B_\delta$. The result then follows by a suitable partition of unity.}. 
Once and for all, fix $s_* > 0$ so that $\mathfrak X^{s_*}(\cM) = \mathfrak X(\cM)$.  

Theorem \ref{thm:HypoFI} estimates the Fisher information of $f^\epsilon$ from below by 
fractional Sobolev regularity $W^{s_*,1}$ (defined on general manifolds in \eqref{def:Wsp}). 
However, for our purposes, it is easier to work with an $L^1$ H\"older-type regularity class $\Lambda^s$ embedded in $W^{s', 1}$ for all $0 < s' < s < 1$ (Lemma \ref{lem:WsLs}). 
Due to subtleties involved in intrinsic definitions of Besov spaces on manifolds (see e.g. \cite{Triebel}), 
we will construct the $\Lambda^s$ seminorm on bounded sets in $\cM$ using a coordinate atlas, as follows. 

Fix $R > 0$, set $U := B_{R}(z_0; \cM)$\footnote{More general sets $U$ can be used, but this is sufficient for all our purposes. In fact, at this step, any bounded, open set can be used.} at some arbitrary $z_0 \in \cM$, and set $U' = B_{R+1}(z_0;\cM)$. Note that by the Hopf-Rinow theorem, $\overline{U}$ and $\overline{U'}$ is compact, hence
there exists $\delta > 0$ and a finite family $\set{\mathbf{x}_j}$ smooth injective mappings $\mathbf{x}_j:B_{4\delta}(0) \to U$ such that both $\tilde U_j := \mathbf{x}_j(B_{4 \delta}(0))$ and $U_j := \mathbf{x}_j(B_{\delta}(0))$ are open covers of $U$.
Let $\set{\chi_j}$ be a smooth partition of unity on $B_R(z_0;\cM)$ subordinate to the
cover $\{\tilde U_j\}$, i.e., (i) $0 \leq \chi_j \leq 1$ everywhere, (ii) 
$\chi_j|_{U_j} \equiv 1$, and (iii) $\chi_j$ is supported in $\tilde U_j$.
With this, we define the following semi-norm for $w \in C^\infty_c(U)$: 
\begin{align}
  \norm{w}_{\Lambda^s}  = \norm{w}_{\Lambda^s(U,\set{\mathbf{x}_j})}  & = \norm{w}_{L^1} + \sup_{h \in \R^d : \abs{h}< \delta}\sum_{j} \int_{\R^d} \frac{\abs{\tilde{w}_j(x+h) - \tilde{w}_j(x)}}{\abs{h}^{s}} J_j(x) \dx, \label{def:Ls}
\end{align}
where $\tilde{w}_j = (\chi_j w) \circ \mathbf{x}_j$ and $J_j : B_\delta(0) \to \R_{> 0}$ is the coordinate representation of the volume element
in the chart $(U_j, \mathbf{x}_j)$. Note that $J_j$
The choice of $z_0, R$ and the coordinate charts $\set{\mathbf{x}_j}$ is henceforth fixed and thus the dependence of $\Lambda^s$ on these parameters\footnote{While the value of $\norm{w}_{\Lambda^s}$ depends on the choice of coordinate chart, for any alternative choice of charts $\set{\mathbf{x}'_j}$, note that the following holds: $\forall w \in C^\infty_c(U)$, $\norm{w}_{\Lambda^s(U,\set{\mathbf{x}_j})} \approx \norm{w}_{\Lambda^s(U,\set{\mathbf{x}'_j})}$. } will be suppressed in what follows. 

The embedding
\begin{align}
\| w\|_{W^{s', 1}}  \lesssim_U \| w \|_{\Lambda^s}
\end{align}
for all $0 < s' < s < 1$ and $w \in C^\infty_c(U)$ is intuitively clear. However, for the sake
of completeness we include a proof in the Appendix (Lemma \ref{lem:WsLs}; see also [pg 301; \cite{Triebel}]). 

\subsubsection{Outline of the proof of Theorem \ref{thm:HypoFI}}

Crucial to both H\"ormander's original approach and our own is the ability to measure
\emph{partial} regularity of a function along some given set of directions.
Given $Y \in \mathfrak X(\cM)$, let $Y^*$ denote its formal adjoint\footnote{Note that $Y^\ast f = -Yf  - (\mathrm{div} Y)f$ and in particular $Y^\ast$ is not a vector field, but a more general first order differential operator.} and let $e^{t Y^*}$ denote the linear propagator solving the partial differential equation
$\partial_t - Y^* = 0$ (this is well-defined as long as $t > 0$ is taken sufficiently small 
depending on $Y$ and $U$). For $s > 0$, we define the family of `partial' $L^1$ H\"older seminorms\footnote{Note that these seminorms are slightly different from those used in \cite{H67}, where the linear propagator $e^{t Y}$ solving $\partial_t - Y = 0$ is used directly. Note, though, that the regularity defined is essentially the same in the sense that 
\begin{align*}
\norm{w}_{L^1} + \sup_{\abs{t} \leq \delta_0} \abs{t}^{-s} \|e^{tY^*} w - w\|_{L^1} \approx_{\delta_0,U,H} \norm{w}_{L^1} + \sup_{\abs{t} \leq \delta_0} \abs{t}^{-s} \|e^{tY} w - w\|_{L^1}.  
\end{align*}
}
\begin{align*}
  \abs{w}_{Y,s} & = \sup_{\abs{t} \leq \delta_0} \abs{t}^{-s} \|e^{tY^*} w - w\|_{L^1} \, .
\end{align*}
Note the dependence on the parameter $\delta_0 > 0$: in practice, given $U$, this parameter
is fixed and depends only on the regularity of $\set{X_I}$ as $I$ ranges over the multi-indices 
with $s(I) \geq s_*$.
We may henceforth fix $\delta_0$, as the vector fields in the proof vary in a uniformly bounded set in $C^J$ for a $J$ depending only the constants in Definition \ref{def:UniHormander}.  

Turning back to the proof of Theorem \ref{thm:HypoFI}: ultimately, for $f = f^\epsilon$
solving the Kolmogorov equation \eqref{eq:gen}, we seek to control 
$\| f^\epsilon\|_{\Lambda^{s_*}(U)}$ from above in terms of $FI(f^\epsilon)$. 
Starting from the latter, it is straightforward (Lemma \ref{lem:FIvsXj1}) to obtain the general functional inequality
\begin{equation} \label{ineq:Xj1FISk}
\sum_{j=1}^r |w|_{X_j, 1} \lesssim \norm{w}_{L^1}^{1/2} \sqrt{FI(w)} \, , 
\end{equation}
for any $w \in C^\infty_c(U)$.
Hence, for all intents and purposes it suffices to control the regularity $\norm{f}_{\Lambda^{s_*}}$ from above in terms of
$\sum_{j \geq 1} |f|_{X_j, 1}$. 
For this, we turn to the ideas laid out by H\"ormander.
First, the spanning condition $\mathfrak X^{s_*} = \mathfrak X$ allows to ``fill in'' the missing directions not spanned by 
the original $\set{X_0, \cdots, X_r}$, leading to the general functional inequality: 
\begin{equation}
\norm{w}_{\Lambda^{s_*}} \lesssim_U \norm{w}_{L^1} + \sum_{j=0}^r \abs{w}_{X_j,s_j}, \label{ineq:LsHypSk}
\end{equation}
for $w \in C^\infty_c(U)$.
This is a relatively straightforward adaptation of [Section 4; \cite{H67}]-- see Lemma \ref{lem:HorAllCtrl} below.

While \eqref{ineq:Xj1FISk} controls $\abs{w}_{X_j,1}, 1 \leq j \leq r$ in terms of the Fisher information $FI(w)$, 
it remains (as in \cite{H67}) to obtain an upper estimate on $\abs{f^\epsilon}_{X_0,1/2}$. 
The starting point is the derivation of an a priori estimate on $f^\epsilon$ from \eqref{eq:gen}.
In \cite{H67}, H\"ormander observed that one naturally obtains an a priori regularity estimate on $X_0 f$ in a \emph{negative} regularity $L^2$ space in terms of $X_j f \in L^2$ (see also discussions in \cite{AM19}). 
In our case, we cannot work in $L^2$, and instead have to work in a negative-type regularity which is essentially the dual to that in \eqref{ineq:Xj1FISk}-- this is the only a priori estimate available that will be useful. 
Pairing \eqref{eq:gen} with a test function $v \in C^\infty_c$ we obtain the following a priori estimate, which is essentially the $W^{-1,\infty}$  norm with respect to the $X_j^\ast$ directions: 
\begin{align}
\begin{aligned}
\mathfrak{D}_\ep(f^\ep) & := \sup\left\{\abs{\int f^\epsilon X_0^\epsilon v \,\dee q} \,:\, v \in C^\infty_c,\, \norm{v}_{L^\infty} + \sum_{j=1}^r \norm{X_j v}_{L^\infty} \leq 1\right\} \\
&  \leq \epsilon \sum_{j=1}^r \|X_j^* f^\epsilon\|_{L^1} \lesssim \epsilon \sqrt{FI}. 
\end{aligned}
\label{ineq:mDaP}   
\end{align}

Using this, the missing $X_0$ regularity is recovered by the following, which is the main difficulty in the proof: for any $0 < \sigma < s_*$, $U$ bounded geodesic ball and $w \in C^\infty_c(U)$, we show that
\begin{align}
|w|_{X_0,1/2} \lesssim_U \sum_{j=1}^r \abs{w}_{X_j,1} + \mathfrak{D}_\ep(w) + \norm{w}_{\Lambda^\sigma}. \label{ineq:X012Sk}
\end{align}
That is, we recover the $\abs{w}_{X_0, 1/2}$ regularity by a combination of the negative
$\mathfrak{D}_\ep$ regularity in conjunction with the positive $\abs{w}_{X_j, 1}$ regularity, 
accruing only a remainder term $\norm{w}_{\Lambda^\sigma}$. 
Combining with \eqref{ineq:LsHypSk} (along with interpolation of $\Lambda^\sigma$ between $\Lambda^{s_*}$ and $L^1$), we obtain the following: 
$\forall U \subset \cM$ open, bounded geodesic balls $\exists C > 0$ such that for all $w \in C^\infty_c(U)$, there holds
\begin{align}
\norm{w}_{\Lambda^{s_*}(U)} \leq C\bigg(\norm{w}_{L^1} + \mathfrak{D}_\ep(w) + \sum_{j=1}^r \abs{w}_{X_j,1}\bigg). \label{ineq:wLs}
\end{align}
From here, our estimate on $\norm{f^\ep}_{\Lambda^{s_*}(U)}$  
in Theorem \ref{thm:HypoFI} is an easy consequence of the functional inequality \eqref{ineq:Xj1FISk} and the apriori estimate in \eqref{ineq:mDaP}. 

In Section \ref{sec:prelim} we review the available apriori estimates and basic functional inequalities that are used in the proof. 
In Section \ref{sec:Sec4Adapt} we briefly recall \eqref{ineq:LsHypSk} and a closely related inequality which are straightforward adaptations of estimates in [Section 4; \cite{H67}]. In Section \ref{sec:X0Xj} we give the proof of \eqref{ineq:X012Sk}, leaving the main lemma to be proved in Section \ref{sec:RegLem}.
As in the corresponding step in \cite{H67}, \eqref{ineq:X012Sk} is based on a careful regularization procedure, though it is more subtle to perform this procedure in the $W^{-1,\infty}$-type framework we work with here. Section \ref{sec:RegLem} is dedicated to the details of this regularization. 

\subsection{Preliminary estimates} \label{sec:prelim}
To start, we record some useful estimates for the $L^1$ H\"older-type seminorms $|\cdot|_{Y, s}$. 
Let $Y \in \mathfrak X(M)$ and let $e^{tY}$ be the linear propagator of the partial differential operator $\partial_t - Y$.
By the method of characteristics, the smooth family of diffeomorphisms $h_Y(t):\cM \to \cM$ 
solving the initial value problem $\dot x = Y(x)$ satisfies the identity
\begin{align}
e^{tY} w = w \circ h_Y(t) \, .
\end{align}
With $Y^*$ the formal adjoint of $Y$, again by the method of characteristics there is a smooth family of strictly positive densities $H_Y(t):\cM \to (0,\infty)$ such that
\begin{align}
e^{tY^*} w = H_Y(t) \,e^{-tY} w \, . \label{eq:etYstar}
\end{align}
In particular, for $\abs{t} \lesssim 1$, 
\begin{align}
\abs{H_Y(t) - 1} \lesssim \abs{t}\, ,  \label{ineq:HYm1}
\end{align}
with similar estimates on higher derivatives. The choice of the parameter $\delta_0$ above will be such that $h_Y(t,x)$ is well-defined (i.e. no trajectories reach infinity in finite time) for all $t \in (-\delta_0,\delta_0)$ and $x\in U$
for every vector field that appears in the proof which follows (more accurately, we will re-scale the fields so that we may take $\delta_0 = 1$).
Accordingly, we have for $t \in (-\delta_0,\delta_0)$ and $w \in C^\infty_c(U)$ we have, 
\begin{align}
\lnorm{e^{tY}w}_{L^p} + \lnorm{e^{tY^*}w}_{L^p}  \lesssim \norm{w}_{L^p}. \label{ineq:etYLpbd}  
\end{align}

Next, we prove \eqref{ineq:Xj1FISk}: that $\|X_j^* w\|_{L^1}$ controls one derivative in the $L^1$-H\"older norms.  
\begin{lemma} \label{lem:FIvsXj1}
Let $U \subset \cM$ be a bounded geodesic ball. Then, $\forall w \in C^\infty_c(U)$ there holds, 
\begin{align*}
\|w\|_{X_j,1} \leqc_U \|X_j^*w\|_{L^1} \lesssim \norm{w}_{L^1}^{1/2}\sqrt{FI(w)}. 
\end{align*}
\end{lemma}
\begin{proof}
Let $v \in L^\infty$, then
\[
	\left|\int_{\cM} v(e^{t X_j^*}w - w) \dq\right| = \left|\int_0^t\int_{\cM} v e^{s X_j^*} X_j^* w\,\dq  \ds\right| \leq |t| \|v\|_{L^\infty}\|X_j^* w\|_{L^1}. 
\]
Taking the supremum over $\|v\|_{L^\infty}\leq 1$ and dividing by $|t|$ gives the first inequality whereas the second follows by Cauchy-Schwarz. 
\end{proof}

Lastly, we record the simple observation that the negative regularity $\mathfrak{D}_\ep$ can be localized. 
\begin{lemma} \label{lem:Dloc} 
Let $U\subseteq \cM$ be a bounded geodesic ball and $\chi \in C^\infty_c(U)$. Then, for any $h \in L^1(\cM)$, 
we have
\begin{align}
\mathfrak{D}_\ep(\chi h) \lesssim_U \norm{h}_{L^1} + \mathfrak{D}_\ep(h). 
\end{align}
\end{lemma} 
\begin{proof} 
Set $w = \chi h$. For test functions $v \in C^\infty_c(U)$, we estimate
\begin{align}
\abs{\int (X_0v) w\dq} & \leq \abs{\int v(X_0\chi)h \,\dee q} + \abs{\int X_0(\chi v) h\,\dee q} \lesssim \norm{h}_{L^1} + \mathfrak{D}_\ep(h). 
\end{align}
Note that the estimate is uniform in $\| X_0\|_{C^1(U)}$. 
\end{proof}

\subsection{Controlling $\Lambda^s$ with $\abs{\cdot}_{X_j,s_j}$} \label{sec:Sec4Adapt}
The first steps to Theorem \ref{thm:HypoFI} are several lemmas that are nearly the same as those in [Section 4; \cite{H67}], except (A) we need them in $L^1$, (B) we need them uniform in the parameter $\epsilon$ hidden in $X_0$, (C) we need to generalize the proof to compact subsets of manifolds $(\cM,g)$.
However, these changes are seen to be relatively small a careful reading of \cite{H67} and so we only sketch the changes below after the statements; see \cite{BL20} for more discussion on the uniformity.

\begin{lemma} \label{lem:Hor4XIctrl}
Let $U$ be a bounded geodesic ball and $0 < \sigma < s_\ast$ with $\mathfrak{X}^{s_\ast}(\cM) = \mathfrak{X}(\cM)$.
For all $\delta > 0$, $\exists C_\delta > 0$ such that for all multi-indices $I$ such that at least one index is zero, the following holds $\forall w \in C^\infty_c(U)$, 
\begin{align*}
\abs{w}_{X_I,s(I)} \leq \delta \abs{w}_{X_0,\frac{1}{2}} + C_\delta \bigg(\sum_{j=1}^r \abs{w}_{X_j,1} + \norm{w}_{\Lambda^\sigma}\bigg).
\end{align*}
Moreover, $C_\delta$ depends on $\set{X_0,X_1,...,X_r}$ only in the manner stated in Theorem \ref{thm:HypoFI}. 
\end{lemma}

The next lemma shows that one can control regularity in $\Lambda^s$ by controlling the original vector fields. 

\begin{lemma} \label{lem:HorAllCtrl}
  Let $U$ be an open, bounded geodesic ball and $s_\ast$ be such that $\mathfrak{X}^{s_\ast}(\cM) = \mathfrak{X}(\cM)$.
  Then, for $w \in C^\infty_c(U)$ there holds  
\begin{align*}
\norm{w}_{\Lambda^{s_\ast}} \lesssim_U \norm{w}_{L^1} + \sum_{j=0}^r \abs{w}_{X_j,s_j}. 
\end{align*}
Moreover, the implicit constant depends on $\set{X_0 ,X_1,...,X_r}$ only in the manner stated in Theorem \ref{thm:HypoFI}. 
\end{lemma}

\begin{proof}[\textbf{Summary of Proofs of Lemma \ref{lem:Hor4XIctrl} and Lemma \ref{lem:HorAllCtrl}}]

First we observe that the analogues of [Lemma 4.1; \cite{H67}] and [Lemma 4.2; \cite{H67}] can both be extended to $\Lambda^s(U)$ for general locally bounded geometries (see the proof of Lemma \ref{lem:RegssBds} below for more details on how this would be done). 
With this update, the proof of Lemma \ref{lem:Hor4XIctrl} follows as in [Lemma 4.6; \cite{H67}] (note the remark that follows the proof) whereas the proof of Lemma \ref{lem:HorAllCtrl} follows as in [Theorem 4.3; \cite{H67}]; indeed the only steps specific to $\R^d$ are  [Lemma 4.1; \cite{H67}]  and [Lemma 4.2; \cite{H67}] and all steps apply equally well for any $L^p$, $p \in [1,\infty)$.
The uniformity is also seen noting how H\"ormander's condition is applied and by noting that the errors depend only on a fixed, finite number of local $C^k$ norms of the vector fields (see \cite{BL20} for more discussion).  
\end{proof}

\subsection{Positive $X_0$ regularity from negative $X_0$ and positive $X_j$ regularity} \label{sec:X0Xj}
In this subsection, we prove the apriori estimate \eqref{ineq:X012Sk} and then use it to complete the proof of Theorem \ref{thm:HypoFI}. 
Fix $0 < \sigma < s_\ast$ arbitrary. 
Having fixed $U$ we may, by rescaling $\set{X_0,X_1,...,X_r}$, assume that $e^{tX_I}$ (and hence $e^{tX_I^*}$) is well-posed for $w \in C^\infty_c(U)$ for $t \in [-1,1]$ for $\sigma \leq s(I)$ (and hence we may choose $\delta_0 = 1$).

Analogous to [Section 5; \cite{H67}], the primary intermediate step is to first deduce the estimate assuming the natural control on essentially all other vector fields in $\mathfrak{X}^\sigma$.
\begin{definition}
Denote by $\mathcal{J}$ the set of all multi-indices $I$ with $\sigma \leq s(I)$ \emph{except} for the singleton $\set{0}$. 
\end{definition}
Note this definition is slightly different from that in \cite{H67}. Define the following semi-norm
\begin{align}
|w|_{M} := \sum_{I \in \mathcal{J}} \abs{w}_{X_I,s(I)}. 
\end{align}
The main step in the proof of \eqref{ineq:X012Sk} (and hence Theorem \ref{thm:HypoFI} as a whole) is to prove the following. 
\begin{lemma} \label{lem:Mest}
For any bounded, geodesic ball $U \subset \mathcal{M}$, and every $w\in C^\infty_c(U)$, the following holds uniformly in $\epsilon$,
\begin{align}
|w|_{X_0,\frac{1}{2}} \lesssim_U |w|_M +\|w\|_{\Lambda^\sigma} + \mathfrak{D}_\epsilon(w). 
\end{align}
\end{lemma}
As in the corresponding [Section 5; \cite{H67}] (and in \cite{AM19}), we use an approach based on a carefully selected regularization, but our choice is even a little more delicate than \cite{H67}.
As the regularization procedure is quite technically subtle, we first give the proof of Lemma \ref{lem:Mest} assuming the existence of a regularizer satisfying the desired properties.
\begin{lemma} \label{lem:Sreg}
There exists a family of uniformly bounded smoothing operators $S_\tau : L^p \to L^p$ for $\tau \in (0,1)$ and $p \in [1,\infty]$ with the following properties: for all $w \in C^\infty_c(U)$, 
\begin{align}
\norm{S_\tau^* w - w}_{L^1} & \lesssim \tau \abs{w}_{M}, \\
\sum_{j=1}^r \norm{X_j S_\tau w}_{L^\infty} & \lesssim \frac{1}{\tau} \norm{w}_{L^\infty}, \label{ineq:tSLinfReg} \\
\norm{[X_0, S_\tau]^*w }_{L^1} & \lesssim \frac{1}{\tau}( \abs{w}_{M} + \|w\|_{\Lambda^\sigma}). \label{ineq:tScomm} 
\end{align}
\end{lemma}

Assuming this lemma for now, we proceed.
\begin{proof}[Proof of Lemma \ref{lem:Mest} assuming Lemma \ref{lem:Sreg}]
We will first obtain regularity estimates by evaluating the fractional time derivative of $e^{t X_0^\ast} w$ (omitting the $\epsilon$ for notational simplicity).  
Observe that for any $t,\tau> 0$, 
\begin{align}
\norm{e^{t X_0^\ast} w - w}_{L^1} \leq \norm{e^{t X_0^\ast} \left(S_\tau^* w - w\right)}_{L^1} + \norm{S_\tau^* w - w}_{L^1} + \norm{e^{tX_0^*} S_\tau^*w- S_\tau^* w}_{L^1}. \label{ineq:etX0star} 
\end{align}
Therefore, by Lemma \ref{lem:Sreg} and $L^1$ boundedness of the group $e^{-tX_0^\ast}$ on $U$, 
\begin{align}
\sup_{\abs{t} \leq 1}\norm{e^{tX_0^\ast} \left(S_\tau^* w- w\right)}_{L^1}  \lesssim \norm{S_\tau^* w - w}_{L^1} & \lesssim \tau\abs{w}_{M}. \label{ineq:L1conv1} 
\end{align}
This will suffice for the first two terms in \eqref{ineq:etX0star}. Next, we estimate the last term in \eqref{ineq:etX0star}. We will do this using the inequality,
\begin{equation}\label{eq:L1-dual-rep}
	\norm{e^{tX_0^*} S_\tau^*w- S_\tau^* w}_{L^1} \leq  \sup_{\|v\|_{L^\infty} \leq 1} \left| \int_0^t\int_{\cM} (e^{s X_0^*}v)\, X_0^*S_\tau^*w\, \dq\ds \right|, 
\end{equation}
which can be easily deduced from $\|w\|_{L^1} = \sup_{\|v\|_{\infty} \leq 1}\int v\,w\,\dq$ and the property $e^{t X_0^*} = (e^{t X_0})^*$.
For a fixed $v\in L^\infty$, we find that
\[
\begin{aligned}
\left|\int_{\cM} (e^{s X_0}v) X_0^* S_\tau^*w\,\dq\right|  & \leq \left|\int_{\cM} (e^{s X_0}v)[X_0,S_\tau]^*w\,\dq\right| + \left|\int_{\cM} (S_\tau e^{s X_0}v)  X_0^* w\,\dq\right|\\
 &\leq \|e^{s X_0}v\|_{L^\infty}\|[X_0,S_\tau]^*w\|_{L^1} + \bigg(\norm{S_\tau e^{s X_0} v}_\infty+ \sum_{j=1}^r\|X_j S_\tau e^{s X_0}v\|_{L^\infty}\bigg) \mathfrak{D}(w).
\end{aligned}
\]
Using Lemma \ref{lem:Sreg} and the boundedness of $e^{tX_0}$ in $L^\infty(U)$, we conclude that
\[
	\left|\int_{\cM} (e^{s X_0}v) X_0^* S_\tau^* w\,\dq\right| \lesssim_U \frac{1}{\tau}\|v\|_{L^\infty}\left(|w|_M + \|w\|_{\Lambda^\sigma}+ \mathfrak{D}(w)\right)
\]
and from \eqref{eq:L1-dual-rep} we deduce 
\[
	\norm{e^{t X_0^*} S_\tau^*w- S_\tau^* w}_{L^1} \leqc_U  \frac{|t|}{\tau} \left(|w|_M + \|w\|_{\Lambda^\sigma}+ \mathfrak{D}(w)\right). 
\]
Therefore, setting $\tau = \sqrt{|t|}$ and using \eqref{ineq:L1conv1} implies
\begin{align}
\norm{e^{tX_0^\ast} w - w}_{L^1} \lesssim \sqrt{|t|} \left(|w|_M + \mathfrak{D}(w)\right). \label{ineq:etX0astbd}
\end{align}
By \eqref{eq:etYstar}, \eqref{ineq:HYm1}, and the boundedness of $U$, this implies the desired result.\end{proof} 

To complete the section, we explain in more detail how Lemma \ref{lem:Mest} implies Theorem \ref{thm:HypoFI}.
\begin{proof}[Proof of Theorem \ref{thm:HypoFI}]
By Lemma \ref{lem:Mest}, followed by Lemma \ref{lem:Hor4XIctrl} to absorb the effect of the higher order brackets by choosing $\delta$ sufficiently small, implies \eqref{ineq:X012Sk}, that is for any $w \in C^\infty_c(U)$, 
\begin{align}
\norm{w}_{X_0,\frac{1}{2}} \lesssim \norm{w}_{\Lambda^\sigma} + \sum_{j=1}^r \abs{w}_{X_j,1} + \mathfrak{D}_\ep(w). 
\end{align}
Applying Lemma \ref{lem:HorAllCtrl} then implies 
\begin{align}
\norm{w}_{\Lambda^{s_*}} \lesssim \sum_{j=1}^r \abs{w}_{X_j,1} + \norm{w}_{\Lambda^\sigma} + \mathfrak{D}_\ep(w). \label{ineq:Lsinter}
\end{align}
Next, note the interpolation (from H\"older's inequality and Definition \ref{def:Ls}): $\forall \sigma \in (0,s^*)$ and all $\delta > 0$, $\exists C_\delta$ such that
\begin{align}
\norm{w}_{\Lambda^\sigma} \leq \delta \norm{w}_{\Lambda^{s_*}} + C_\delta\norm{w}_{L^1},  \label{ineq:LssigIn}
\end{align}
which by \eqref{ineq:Lsinter} implies H\"ormander inequality \eqref{ineq:wLs}. 
Let $U \subset\subset U' \subset \cM$ where $U'$ is another bounded (open) geodesic ball and let $\chi \in C^\infty_c(U')$ with $\chi(x) =1$ for all $x \in U$. 
Then, Lemma \ref{lem:Dloc} implies  
\begin{align}
\norm{\chi f^\epsilon}_{\Lambda^{s^*}} \lesssim 1 + \sum_{j=1}^r \abs{\chi f^\epsilon}_{X_j,1} + \mathfrak{D}_\ep(f^\ep). \label{ineq:Xj1D}
\end{align}
Putting Lemma \ref{lem:FIvsXj1} together with \eqref{ineq:Xj1D} and \eqref{ineq:mDaP}, completes the proof of Theorem \ref{thm:HypoFI}.
\end{proof} 

\subsection{Regularization: Lemma \ref{lem:Sreg}} \label{sec:RegLem}
In this subsection we prove Lemma \ref{lem:Sreg}. 
First, we define a suitable ``isotropic'' mollifier via the parameterization. Let $\phi \in C^\infty_0((-1,1))$ with $\phi \geq 0$, $\int_{-1}^1 \phi(t) \dt = 1$, and $\phi(-t) = \phi(t)$, denoting $\tilde{w}_j = \chi_j w \circ \mathbf{x}_j$, and for each $x\in \R^d$ let $\phi_\tau(x) = \frac{1}{\tau^d}\phi(|x|/\tau)$. We define the regularization of $\chi_j w$ as follows for $0< \tau \leq \delta$, 
\[
\Phi_\tau^{j}  w \circ \mathbf{x}_j := \frac{1}{J(x)}\int_{\R^d}\phi_\tau(|x-y|) \tilde{w}_j(y) J_j(y) \dy, \label{def:Phij}
\]
where as above we write $J_j = \left(\textup{det} \, \tilde{g}\right)^{1/2}$, the volume element on $\cM$ in local coordinates. 
We write
\[
\Phi_\tau w(q) = \sum_{j} \Phi_\tau^{j} w(q),
\]
and note that since $w$ is compactly supported, the above summation is finite.
Note that $\Phi_\tau$ is not $L^2(\dee q)$ self adjoint with the adjoint given by $(\Phi_\tau)^*= \sum_j (\Phi^{j}_\tau)^*$
\[
  (\Phi^{j}_\tau)^*w\circ \xbf_j = \int_{\R^d} \phi_{\tau}(|x-y|)\tilde{w}_j(y)\dy.
\]
The basic properties of these kinds of mollifiers are classical, however, we include sketches for completeness.
Due to the compatibility between definitions \eqref{def:Ls} and \eqref{def:Phij}, and the fact that the properties we are interested in are purely local, the results follow from the corresponding statements on $\R^d$.
We sketch the details of this in the first lemma, which is the most delicate of the estimates we require.

\begin{lemma} \label{lem:PhiComm}
For all $\sigma \in [0,1)$, $U \subset \cM$ bounded, geodesic balls, there holds the following uniformly in $\tau \in (0,\delta)$ and uniformly in $C^3$ bounded sets of $Y \in \mathfrak{X}(U)$, for all $w \in C^\infty_c(U)$ (identifying $\Lambda^0 = L^1$), then the commutator $[Y,\Phi_\tau] = Y\Phi_\tau - \Phi_\tau Y$ satisfies
\begin{align}
\norm{[Y, \Phi_{\tau}]^* w }_{L^1} &\lesssim_U \tau^\sigma\norm{w}_{\Lambda^\sigma} \label{ineq:FL}. 
\end{align}
\end{lemma}
\begin{proof}
It suffices to show that the lemma holds for all $\Phi^{j}$. Additionally, since $Y^*=  -Y - \Div Y$ and by the fact that $Y$ is smooth and bounded, we easily obtain
\[
  \|[\Div Y, (\Phi^{j})^*]w\|_{L^1}\leqc_U \tau \|w\|_{ L^1},
\]
and therefore since $[Y,\Phi_\tau]^* = -[Y,\Phi_\tau^*] - [\Div Y,\Phi_\tau^*]$ it suffices to prove that
\begin{equation}\label{eq:sufficient-mollifier}
  \|[Y,(\Phi_\tau^{j})^*]w\|_{L^1} \leqc_U \tau^\sigma \|w\|_{\Lambda^\sigma}.
\end{equation}

With this goal in mind, writing $Y(x) = a_k(x) \partial_{x_k}$ (using Einstein notation summation) as the local $\xbf_j$ parameterization of the vector field $Y$, we find
\[
\norm{[Y, (\Phi_{\tau}^{j})^*] w}_{L^1} = \int_{\R^d} \abs{ \int_{\R^d} \Big(a_k(x) \partial_{x_k}\phi_\tau(|x-y|) \tilde{w}_j(y) - \phi_\tau(|x-y|) a_k(y) \partial_{y_k} \tilde{w}_j(y)\Big)\dy } J_j(x) \dx. 
\]
Integrating by parts in $\partial_{y_k}$, using $\partial_{x_k}\phi_\tau(|x-y|)=-\partial_{y_k}\phi_{\tau}(|x-y|)$, $\partial_{y_k} \tilde{w}_j(x) = 0$ and that
\[
  \int_{\R^d} a_k(x)\partial_{x_k}\phi_\tau(|x-y|)\tilde{w}_j(x)\dy = 0,
\] 
we obtain
\begin{align*}
 \norm{[Y, (\Phi_{\tau}^{j})^*] w}_{L^1} & \lesssim \int_{\R^d} \abs{ \int_{\R^d} (a_k(x) - a_k(y))\partial_{x_k}\phi_\tau(|x-y|) \left(\tilde{w}_j(y) - \tilde{w}_j(x)\right) \dy } J_j(x)\dx \\
 & \hspace{-2cm} + \int_{\R^d} \abs{ \int_{\R^d} \partial_{y_k}(a_k(y))\,\phi_\tau(|x-y|) \left(\tilde{w}_j(y) - \tilde{w}_j(x)\right) \dy } J_j(x) \dx.
\end{align*}
Using that $\abs{a_k(x) - a_k(y)}\lesssim \abs{x-y}$ and $\partial_{y_k}(a_k(y))\leqc 1$ gives
\[
\norm{[Y, (\Phi_{\tau}^{j})^*]w}_{L^1} \lesssim \int_{\R^d} \int_{\R^d} \abs{\frac{\abs{x-y}}{\tau^{d+1}} \phi'\left(\frac{\abs{x-y}}{\tau} \right)} + \abs{\frac{1}{\tau^d} \phi\left(\frac{\abs{x-y}}{\tau} \right)} \abs{\tilde{w}_j(y) - \tilde{w}_j(x)}  J_j(x)\dy\dx. 
\]
Then, making the change of variables $y = x + h$, we obtain from the definition of $\Lambda^\sigma$ \eqref{def:Ls}, we obtain \eqref{eq:sufficient-mollifier}.
\end{proof}

Next we prove the following regularization estimate. 

\begin{lemma}
For all $\sigma \in [0,1)$, for all $U \subset \cM$ open, bounded geodesic balls, there holds uniformly over $\tau \in (0,1)$ and uniformly over bounded $C^2$ sets of $Y \in \mathfrak{X}(K)$, for all $w \in C^\infty_c(U)$ and $p \in [1,\infty]$,
\begin{align}
\norm{\tau Y \Phi^\ast_{\tau} w}_{L^p} + \norm{\tau Y \Phi_{\tau} w}_{L^p}& \lesssim_U  \norm{w}_{L^p} \label{ineq:DRegPhiLp} \\ 
\norm{\tau Y \Phi^\ast_{\tau} w}_{L^1} + \norm{\tau Y \Phi_{\tau} w}_{L^1} & \lesssim_U  \tau^\sigma \norm{w}_{\Lambda^\sigma}.  \label{ineq:DRegPhiLams} \\
\norm{ \Phi_{\tau}^\ast \tau Y w}_{L^1} + \norm{\Phi_{\tau} \tau Y w }_{L^1} & \lesssim_U  \tau^\sigma \norm{w}_{\Lambda^\sigma}.  \label{ineq:DRegPhiLamsInt}
\end{align}
\end{lemma} 
\begin{proof}
We will only consider the case of $\Phi$; the corresponding estimates involving $\Phi^\ast$ are similar. 
We proceed with a proof that is similar to, but much simpler than, that used in Lemma \ref{lem:PhiComm}.
We consider first \eqref{ineq:DRegPhiLams}; \eqref{ineq:DRegPhiLamsInt} follow from similar arguments. 
Using the average zero property again,  
\begin{align*}
\norm{\tau Y \Phi_\tau^{j} w_j}_{L^1} & \lesssim \int_{\R^d} \abs{\int_{\R^d} \tau a_k(x) \partial_{x_k} \phi_\tau(\abs{x-y}) \left(\tilde{w}_j(y) - \tilde{w}_j(x)\right) J_j(x) \dy} J_j(x) \dx \\
& \quad + \int_{\R^d} \abs{\int_{\R^d} \tau a_k(x) \partial_{x_k} \phi_\tau(\abs{x-y}) \left(J_j(y) - J_y(x) \right) \tilde{w}_j(y)  \dy} J_j(x) \dx \\ 
&\lesssim \tau^\sigma \norm{w}_{\Lambda^\sigma}, 
\end{align*}
which is the desired estimate.

Turn next to \eqref{ineq:DRegPhiLp}. The $p=1$ case follows from a straightforward variant of the argument used for \eqref{ineq:DRegPhiLams}.
For the end-point $p=\infty$ we have by Young's inequality
\[
\abs{\tau Y \Phi_\tau^{j} w_j\circ\xbf_j(x)} \lesssim  \int_{\R^d} \tau \abs{a_k(x) \partial_{x_k} \phi_\tau(\abs{x-y}) \tilde{w}_j(y)  }  \dy  \lesssim \norm{w}_{L^\infty}. 
\]
The intermediate $p$ follows by the Riesz-Thorin interpolation theorem. 
\end{proof}

Next, we introduce directional regularizations with respect to a given vector field $Y \in \mathfrak{X}$, as done in [Section 5; \cite{H67}]. 
Accordingly, for each $\varphi \in C^\infty_c([-1,1])$ and $\tau \in (0,1)$ define
\begin{align}
\varphi_{\tau Y} w := \int_\R (e^{t Y} w)\, \varphi_{\tau}(t) \dt, 
\end{align}
where $\varphi_\tau (t) := \frac{1}{\tau}\varphi(\tau^{-1}t)$. Note that since $(e^{t Y})^* = (e^{t Y^*})$, we have
\[
	(\varphi_{\tau Y})^* w = \varphi_{\tau Y^*} w = \int_\R (e^{tY^*}w)\,\varphi_\tau(t)\dt, 
\]
a property that will be used repeatedly in the sequel. 

First we record the basic property that these regularizers are bounded on $L^p$.
The proof is straightforward and is omitted for brevity. 
\begin{lemma}\label{lem:phiXLpBds}
For any $Y \in \mathfrak{X}$, for any open, bounded geodesic ball $U \subset \cM$, and $\varphi\in C^\infty_c([-1,1])$ there holds for all $p \in [1,\infty]$, and $w \in C^\infty_c(U)$, 
\begin{align}
\norm{(\varphi_{\tau Y})^* w}_{L^p} & \lesssim \norm{w}_{L^p} \\
\norm{\Phi_\tau w}_{L^p} + \norm{\Phi_\tau^\ast w}_{L^p} & \lesssim \norm{w}_{L^p}. 
\end{align}
\end{lemma}

Next, we note that the regularizations, the adjoint regularizations, and vector field exponentials are bounded in the $\Lambda^s$ space.

\begin{lemma} \label{lem:RegssBds}
For $\abs{t} \leq 1$, $\tau \in (0,1)$ and $\sigma \in [0,1)$,
for all open, bounded geodesic balls $U \subset \cM$ and $w \in C^\infty_c(U)$, there holds
\begin{align}
\norm{e^{tY} w}_{\Lambda^\sigma} & \lesssim \norm{w}_{\Lambda^\sigma} \label{ineq:etYLSbdd} \\
\norm{e^{tY^*} w}_{\Lambda^\sigma} & \lesssim \norm{w}_{\Lambda^\sigma} \\
\norm{(\varphi_{\tau Y})^\ast w}_{\Lambda^\sigma} & \lesssim \norm{w}_{\Lambda^\sigma}.  \label{ineq:vphinuLams} \\ 
\norm{\Phi_\tau w}_{\Lambda^\sigma} + \norm{\Phi_\tau^\ast w}_{\Lambda^\sigma} & \lesssim \norm{w}_{\Lambda^\sigma}.  \label{ineq:PhiLams}
\end{align}
\end{lemma}
\begin{proof}

The last estimate \eqref{ineq:PhiLams} follows analogously to the corresponding H\"older-type estimate on $\R^d$; we omit the details for brevity.
The second and third estimates follow easily from the first estimate, and hence we turn to the proof of \eqref{ineq:etYLSbdd}. 

After applying coordinate parameterization to reduce to the case of $\R^d$, the first estimate will follow from an adaptation of [Lemma 4.2; \cite{H67}] to $L^1$ and more general geometry.
We include the proof for the readers' convenience. 
Recalling the coordinate charts defined in for \eqref{def:Ls}, denote $\tilde{w}_j = (\chi_j w)\circ \mathbf{x}_j$
and note that $\exists g_{Y,j}(t,x) \in C^\infty(\R^d)$ such that $(\chi_j e^{tY} w) \circ \mathbf{x}_j = \tilde{w}_j(g_{Y,j}(t,x))$ satisfying exactly analogous properties that $h_Y$ satisfies.
Therefore, by the definition \eqref{def:Ls}
\begin{align*}
\norm{e^{tY} w}_{\Lambda^\sigma} = \norm{e^{tY} w}_{L^1} + \sup_{\tau \in \R^d : \abs{\tau}< 1}\sum_{j} \int_{\R^d} \frac{\abs{\tilde{w}_j( g_{Y,j}(t, x + \tau) ) - \tilde{w}_j(g_{Y,j}(t,x))}}{\abs{\tau}^{s}} J_j(x) \dx. 
\end{align*}
The first term is estimated by \eqref{ineq:etYLpbd}.
We next turn to the latter term.
By the group property of $e^{tY}$ we may restrict $\abs{t} < \delta'$ to any $\delta'$ sufficiently small depending only on $Y$ and the ball $U$ (i.e. local geometrical information).
Furthermore, we may restrict $\abs{\tau} < \delta'$ (as the contribution from $\abs{\tau} > \delta'$ is controlled by $L^1$). 
Next, observe that (using the same trick as in [Lemma 4.2; \cite{H67}])
\begin{align*}
\int_{\R^d} \frac{\abs{\tilde{w}_j( g_{Y,j}(t, x + \tau) ) - \tilde{w}_j(g_{Y,j}(t,x))}}{\abs{\tau}^{s}} J_j(x) \dx & \lesssim \\ & \hspace{-4cm}  \abs{\tau}^{-d}\int_{\abs{\zeta} < \abs{\tau} }\int_{\R^d}  \frac{\abs{\tilde{w}_j( g_{Y,j}(t, x + \tau) ) - \tilde{w}_j(g_{Y,j}(t,x) + \zeta)}}{\abs{\tau}^{s}} J_j(x) \dx \dee \zeta + \norm{w}_{\Lambda^s}.   
\end{align*}
Then for each fixed $j$ we make the $(t,\tau)$-dependent change of variables from $(x,\zeta)\to (\gamma,\eta)$ in the integrals via
$\eta + \gamma= g_{Y,j}(t,x+\tau)$ and $\gamma = g_{Y,j}(t,x) + \zeta$.
Note that for $t=0$ this reduces to $\gamma = x+\zeta$ and $\eta = -\zeta$ and for $\tau = 0$ this becomes $\gamma = g_{Y,j}(t,x) + \zeta$ and $\eta=-\zeta$.
Hence, for $\delta'$ chosen small enough, the Jacobian $\mathcal{J}_{t,\tau}$ for this change of variables satisfies $\abs{\mathcal{J}_{t,\tau}-1} < 1/2$ and we obtain 
\begin{align*}
\int_{\R^d} \frac{\abs{\tilde{w}_j( g_{Y,j}(t, x + \tau) ) - \tilde{w}_j(g_Y(t,x))}}{\abs{\tau}^{s}} J_j(x) \dx \lesssim  \abs{\tau}^{-d}\int_{\R^d}\int_{\mathcal{D}_{t,\tau}} \frac{\abs{\tilde{w}_j( \eta + \gamma ) - \tilde{w}_j(\gamma)}}{\abs{\tau}^{s}}  \dee \eta\dee \gamma + \norm{w}_{\Lambda^s}, 
\end{align*}
over some bounded integration domain $\mathcal{D}_{t,\tau}\subseteq \R^d$ satisfying $\mathrm{vol} \,\mathcal{D}_{t,\tau}\lesssim \abs{\tau}^d$. 
The desired estimate \eqref{ineq:etYLSbdd} follows upon noting that  $\abs{\eta} \lesssim \tau$ on $\mathcal{D}_{t,\tau}$. 
\end{proof} 

In a similar vein, the chain rule implies the following estimates. 
\begin{lemma} \label{lem:DbdsRegs}
For all bounded, geodesic balls $U \subset \cM$, for all $\abs{\tau} \leq 1$ and $\forall k \geq 2$, the following holds $\forall w \in C^\infty_c(U)$, 
\begin{align}
\sup_{Z \in \mathfrak{X}: \norm{Z}_{C^k} \leq 1} \norm{ Z e^{\tau Y} w}_{L^\infty} & \lesssim \sup_{Z \in \mathfrak{X}: \norm{Z}_{C^k} \leq 1} \norm{Z w}_{L^\infty}, \label{ineq:Zet1} \\
\sup_{Z \in \mathfrak{X}: \norm{Z}_{C^k} \leq 1} \norm{ Z^* e^{\tau Y^*} w}_{L^1} & \lesssim \sup_{Z \in \mathfrak{X}: \norm{Z}_{C^k} \leq 1} \norm{Z^* w}_{L^1}, \label{ineq:Zet2} \\
\sup_{Z \in \mathfrak{X}: \norm{Z}_{C^k} \leq 1} \norm{ Z^* (\varphi_{\tau Y})^\ast w}_{L^1} & \lesssim \sup_{Z \in \mathfrak{X}: \norm{Z}_{C^k} \leq 1} \norm{Z^* w}_{L^1}. \label{ineq:ZetVarphi2}
\end{align}
\end{lemma}
\begin{proof}
Estimates \eqref{ineq:Zet1}, \eqref{ineq:Zet2} follow from the chain rule and \eqref{ineq:ZetVarphi2} then follows from the definition of the regularizers. 
\end{proof} 

The next lemma characterizes the regularization property of the regularizers.
\begin{lemma} \label{lem:varphiRegs}
For all bounded, geodesic balls $U \subset \cM$ and $w \in C^\infty_c(U)$,
\begin{align}
\norm{ (Y \varphi_{\tau Y})^\ast w}_{L^1} & \lesssim \sup_{\abs{t} \leq \tau} \norm{e^{t Y^*} w - w}_{L^1}. 
\end{align}
\end{lemma}
\begin{proof}
We have
\begin{align}
  (\varphi_{\tau Y})^\ast Y^* w & = \int_\R (e^{t Y^*}Y^*w) \varphi_\tau (t)\, \dt = \int_\R \frac{\dee}{\dt}\left(e^{t Y^*} w-w\right) \varphi_\tau(t)\, \dt  = -\int_\R \left(e^{t Y^*}w - w\right)\varphi^\prime_\tau(t)\,  \dt. 
\end{align}
The result then follows by Minkowski's inequality.   
\end{proof}

We will also need the $L^\infty$ regularization property.
\begin{lemma} \label{lem:LinfRegVarphi}
For all open, bounded geodesic balls $U \subset \cM$ and $w \in C^\infty_c(U)$, 
\begin{align}
\norm{Y \varphi_{\tau Y} w}_{L^\infty} \lesssim \frac{1}{\tau}\norm{w}_{L^\infty}. 
\end{align} 
\end{lemma}
\begin{proof}
This follows by a straightforward variant of the proof of Lemma \ref{lem:varphiRegs}. 
\end{proof}

Next, we show that the H\"older-type regularity classes are natural for controlling convergence of the operators.
It is natural to specialize to the specific form in which we are using it. 
\begin{lemma} \label{lem:L1varphiConv}
For all open, bounded geodesic balls $U \subset \cM$ and $w \in C^\infty_c(U)$, there holds for $\varphi \in C^\infty_c([-1,1])$, $\varphi \geq 0$ and $\int_\R \varphi(t)\,\dt = 1$,
\begin{align}
\norm{(\varphi_{\tau X_I})^\ast w - w}_{L^1} & \leqc \sup_{|t|\leq \tau}\norm{e^{tX_I^*}w - w}_{L^1}. 
\end{align}
\end{lemma}
\begin{proof}
By Minkowski's inequality,
\[
\begin{aligned}
\norm{(\varphi_{\tau X_I})^\ast w - w}_{L^1} &\leq \int_\R \norm{e^{t X_I^*} w - w}_{L^1} \varphi_{\tau}(t) \,\dt \leq \sup_{|t|\leq \tau}\norm{e^{t X_I^*} w - w}_{L^1}.
\end{aligned}
\]
\end{proof}

The following Lemma will be useful when measuring regularity of $\varphi_{\tau X_I}$ with respect to $X_J$. 
\begin{lemma}\label{lem:Regphi}
For all open, bounded geodesic balls $U \subset \cM$ and $w \in C^\infty_c(U)$, for $I,J\in \mathcal{J}$, there holds
\begin{align}
	\sup_{|t|\leq \tau^{m(J)}} \norm{e^{t X_J^*}(\varphi_{\tau^{m(I)} X_I})^*w -(\varphi_{\tau^{m(I)} X_I})^*w}_{L^1}  & \leqc \sup_{|t|\leq \tau^{m(J)}}\norm{e^{t X_J^*}w -w}_{L^1} \\ & \quad + \sup_{|t|\leq \tau^{m(I)}}\norm{e^{t X_I^*}w -w}_{L^1}. 
\end{align}
\end{lemma}
Now, we are ready to define the regularizer $S_\tau$. Let us now give $\mathcal{J}$ a total ordering so that $m(I)$ is an increasing function of $I \in \mathcal{J}$ and we denote $\cJ_\infty = \cJ \cup \set{\infty}$.
We define $S_\tau$ in terms of an \emph{ascending}, ordered composition of regularizing operators 
\begin{align}
S_\tau w := \left(\prod_{I \in \cJ} \varphi_{\tau^{m(I)} X_I}\right) \Phi_{\tau^{1/\sigma}} w. 
\end{align}
This regularizer is similar, but not quite exactly the same as that defined in \cite{H67} due to the inclusion of more regularization operators. 
However, we will ultimately use $S_\tau^*$ as the regularizer, which is a little more subtle to work with. 
Analogous to \cite{H67}, we also define the truncated regularizer, for all $J \in \mathcal{J}$,  
\[
S_\tau^Jw := \left(\prod_{I \in \cJ : I \geq J} \varphi_{\tau^{m(I)} X_I}\right)\Phi_{\tau^{1/\sigma}}w,
\]
with $S_\tau^\infty := \Phi_{\tau^{1/\sigma}}$ when $J = \infty$.
The remainder of the subsection is dedicated to proving Lemma \ref{lem:Sreg}. The first step is to obtain the $L^1$ convergence. 
\begin{lemma}
For all open, bounded geodesic balls $U \subset \cM$ and $w \in C^\infty_c(U)$,  
\begin{align*}
\norm{S^\ast_\tau w - w}_{L^1} \lesssim \tau \abs{w}_{M}.
\end{align*}
\end{lemma}
\begin{proof}
For any finite family of $L^1 \to L^1$ bounded linear operators $Z_1, Z_2,...,Z_k$ we have 
\begin{align*}
\norm{Z_1 Z_2 ... Z_k w - w}_{L^1} \leq \sum_{j=1}^k \norm{(Z^1...Z^{j-1})(Z_j w  - w)}_{L^1} \lesssim \sum_{j=1}^k \norm{Z_j w - w}_{L^1}. 
\end{align*}
The result then follows from Lemma \ref{lem:L1varphiConv}. 
\end{proof}

The next Lemma is crucial for characterizing the regularization properties of $(S_\tau^J)^\ast$ in $L^1$.  
This is the adjoint analogue of [Lemma 5.2; \cite{H67}], which is a little more technical. 
\begin{lemma} \label{lem:SJXIs}
For all open, bounded geodesic balls $U \subset \cM$ and $w \in C^\infty_c(U)$,  there holds for any multi-indices $J \leq I$, 
\begin{align}
\norm{(\tau^{1/\sigma}Y S_\tau^J)^\ast w}_{L^1} & \lesssim \tau \norm{w}_{\Lambda^\sigma} \label{ineq:EatTheError} \\ 
\norm{(\tau^{m(I) }X_I S_\tau^{J})^\ast w}_{L^1} & \lesssim \sum_{I' \in \cJ : I' \geq J} \sup_{\abs{t} \leq \tau^{m(I^\prime)}} \norm{e^{t X_{I'}^*} w - w}_{L^1} + \tau \norm{w}_{\Lambda^\sigma}. \label{ineq:SstarMultReg} 
\end{align}
\end{lemma}
Before we continue, we define for two vector fields $X$ and $Y$
\begin{align}
e^{t\,\ad(X)}Y := (e^{t X})_\sharp Y, 
\end{align}
where $(e^{t X})_\sharp Y$ denotes the pushforward of $Y$ as a vector field under the diffeomorphism $e^{t X}$. This is just the adjoint representation of group element $e^{t X}$ on the Lie algebra of vector fields. It will be useful to expand $e^{t\, \ad(X)}Y$ in a Taylor expansion.
\begin{lemma}\label{eq:Taylor-expansion}
For two smooth vector fields $X,Y$, $t\in [-1,1]$ and $N\in \N$, there exists a smooth bounded vector field $Y_{N,t}$, locally uniformly bounded in $C^k$ ($\forall k$) on $t \in [-1,1]$, such that
\[
(e^{t\,\ad(X)}Y) = \sum_{0\leq k< N}\frac{t^k}{k!}(\ad(X)^kY) +  \frac{t^N}{N!}\,Y_{N,t}
\]
\end{lemma}
A simple consequence of the chain rule implies that $Y(e^{t X}w) = e^{t X}(e^{t\,\mathrm{ad}(X)}Yw)$ and gives the following useful representation for the smoothing operators
\[
	Y \varphi_{\tau X}w = \int_\R \left(e^{t X}(e^{-t \,\ad(X)}Yw)\right)\,\varphi_{\tau}(t)\dt. 
\]
Lemma \ref{eq:Taylor-expansion} then gives the following formula for $Y\varphi_{\tau X}$ (used also in \cite{H67}). 
\begin{lemma} \label{lem:RegsExpansion}
For each $\varphi\in C^\infty_c((-1,1))$, $k\in \N$ and vector field $X$ define
\begin{align}
\hat{\varphi}^k_{\tau X}w := \int_\R (e^{t X}w)\,\hat{\varphi}^k_\tau(t)\,\dt, \quad \text{where}\quad\hat{\varphi}^k(t) := \frac{t^k}{k!} \varphi(t) \in C^\infty_c((-1,1)). \label{def:phihatk}
\end{align}
Then for two smooth vector fields $X,Y$, $\tau \in (0,1]$ and $N\in \N$, the following holds
\begin{align}\label{eq:commutator-reg}
Y\varphi_{\tau X}w
&= \sum_{0\leq k < N} \tau^{k}\left(\hat{\varphi}^k_{\tau X}(\ad(-X)^kYw)\right) + \tau^N R^N_{\tau X}w,
\end{align}
where
\begin{align}
	R^N_{\tau X} w := \int_\R (e^{tX} Y_{N,t} w)\, \hat{\varphi}^N_\tau(t)\,\dt. \label{def:RNtX}
\end{align}
\end{lemma}

We are now ready to prove Lemma \ref{lem:SJXIs}. 
\begin{proof}[Proof of Lemma \ref{lem:SJXIs}] \hspace{1em}

\textbf{Proof of \eqref{ineq:EatTheError}}.
We proceed by induction. For $J = \infty$ the result follows from \eqref{ineq:DRegPhiLamsInt}.
Hence, we next assume that the result holds for all $J'$ with $J' > J$ and prove that it also holds for $J$.
We begin with the decomposition 
\begin{align*}
(S_\tau^J)^\ast = (S_\tau^{J'})^\ast (\varphi_{\tau^{m(J)} X_J})^*.  
\end{align*}
By a trivial application of Lemma \ref{lem:RegsExpansion} with $N=1$ and $X=X_J$, there exists a smooth bounded vector field $Y_{1,t}$ such that (recall Definition \eqref{def:RNtX}), 
\[
	(\tau^{1/\sigma} YS_\tau^J)^* = (\tau^{1/\sigma} YS_\tau^{J^\prime})^*(\varphi_{\tau^{m(J)}X_J})^* + \tau^{m(J)+1/\sigma}(S_\tau^{J^\prime})^*(R^1_{\tau^{m(J)} X_J})^*.
\]
By the induction hypothesis and Lemma \ref{lem:RegssBds} we have for the first term above
\[
	\|(\tau^{1/\sigma} YS_\tau^{J^\prime})^*(\varphi_{\tau^{m(J)}X_J})^*w\|_{L^1} \leqc \tau \|(\varphi_{\tau^{m(J)}X_J})^*w\|_{\Lambda^\sigma} \leqc \tau \|w\|_{\Lambda^\sigma}.
\]
A similar estimate holds for the second term using Minkowski's inequality 
\[
\begin{aligned}
	\|(\tau^{1/\sigma}S_\tau^{J^\prime})^*(R^1_{\tau^{m(J)} X_J})^*w\|_{L^1} &\leq \int_{\R} \|(\tau^{1/\sigma}Y_{1,t}S^{J^\prime}_\tau)^* e^{t X_J^*}w \|_{L^1}\hat{\varphi}^1_{\tau^{m(J)}}(t)\dt\\
	 &\leqc \tau\|e^{t X_J^*}w\|_{\Lambda^\sigma} \leqc \tau \|w\|_{\Lambda^\sigma}
\end{aligned}
\]
and the estimate \eqref{ineq:EatTheError} now follows.

\textbf{Proof of \eqref{ineq:SstarMultReg}}.  First we note that if $I = J$ then we have for $J^\prime$ the smallest element such that $J^\prime > J$ by Lemma \ref{lem:varphiRegs} and the $L^1$ boundedness of $(S^J_\tau)^*$
\[\begin{aligned}
	\|(X_J S^J_\tau)^*w\|_{L^1} &= \|(S^{J^\prime}_\tau)^*(X_J \varphi_{\tau^{m(J)}X_J})^*w\|_{L^1} \leqc \sup_{|t|\leq \tau^{m(J)}}\|e^{tX_J^*}w - w\|_{L^1}. 
\end{aligned}
\]
When $I > J$, we proceed by induction. First of all, the result follows by definition of \eqref{def:Phij} if $J = \infty$. Hence, we next assume that the result holds for all $J'$ with $J' > J$ and prove that it also holds for $J$ the largest element less than $J^\prime$. Again writing
\[
	(S^J_\tau)^* = (S^{J^\prime}_\tau)^*(\varphi_{\tau^{m(J)}X_J})^*
\]
and using Lemma \ref{lem:RegsExpansion} we obtain, $\forall N \geq 1$, 
\[
\begin{aligned}
(\tau^{m(I)}X_IS^{J}_\tau)^* &= \sum_{0\leq k < N}(\tau^{m(I^\prime_k)}X_{I^\prime_k} S^{J^\prime}_\tau)^*(\hat{\varphi}^k_{\tau^{m(J)}X_J})^* + (\tau^{m(I) + Nm(J)}S^{J^\prime}_\tau)^*(R^N_{\tau^{m(J)}X_J})^*\\
&=: T_1 + T_2, 
\end{aligned}
\]
where
\[
	X_{I_k^\prime} := \ad(-X_J)^kX_I, \quad\text{and}\quad  m(I^\prime_k) = m(I) + km(J).
\]
If we choose $N$ large enough so that
\[
	m(I) + Nm(J) \geq \frac{1}{\sigma},
\]
we can treat the ``error'' term $T_2$ by applying Minkowski's inequality and \eqref{ineq:EatTheError},
\[
\begin{aligned}
	\|T_2w\|_{L^1} &\leq \int_\R\|(\tau^{1/\sigma}Y_{N,t}S^{J^\prime}_\tau)^*e^{tX_J^*}w\|_{L^1} \,\hat{\varphi}_{\tau^{m(J)}}^N(t)\dt \leqc \tau \|e^{t X_J^*}w\|_{\Lambda^\sigma} \leqc \tau \|w\|_{\Lambda^\sigma}. 
\end{aligned}
\]
Since $I_k^\prime \geq J^\prime$ for all $k\geq 0$, we can use the induction hypothesis and Lemmas \ref{lem:Regphi}, \ref{lem:phiXLpBds} (it is straightforward to check that $\hat{\varphi}^k$ satisfies the same) to treat the first term
\[
\begin{aligned}
	\norm{T_1w}_{L^1} &\leqc \sum_{0\leq k < N}\sum_{I^\prime \geq J^\prime} \sup_{|t|\leq \tau^{m(I^\prime)}}\norm{e^{t X_{I^\prime}^*}(\hat{\varphi}^k_{\tau^{m(J)}X_J})^*w-(\hat{\varphi}^k_{\tau^{m(J)}X_J})^*w}_{L^1} + \tau \norm{(\hat{\varphi}^k_{\tau^{m(J)}X_J})^*w}_{\Lambda^\sigma}\\
	&\leqc \sum_{I^\prime \geq J} \sup_{|t|\leq \tau^{m(I^\prime)}}\|e^{t X_{I^\prime}^*}w - w\|_{L^1} + \tau \|w\|_{\Lambda^\sigma}
\end{aligned}
\]
as desired.
\end{proof} 

The main commutator estimate is a consequence of the following.
\begin{lemma}
For all $J \in \mathcal{J}$, $U \subset \cM$ open, bounded geodesic ball, there holds $\forall w \in C^\infty_c(U)$ and $\tau \in (0,1)$, 
\begin{align}
\norm{[\tau^2 X_0,S_\tau^J]^*w}_{L^1} & \lesssim \sum_{I \in \mathcal{J}: I \geq J} \sup_{|t|\leq \tau^{m(I)}}\norm{e^{t X_I^*} w - w}_{L^1} + \tau \norm{w}_{\Lambda^s}. 
\end{align}
\end{lemma}
\begin{proof}
As in the proof of \eqref{ineq:SstarMultReg} above, we proceed by induction.
Firstly, the estimate holds for $J = \infty$ due to the commutator estimate Lemma \ref{lem:PhiComm}. 
As above, assume the result holds for all $J'$ with $J' > J$ and prove that it also holds for $J$, writing
\begin{align}
(S_t^J)^\ast = (S_t^{J'})^\ast (\varphi_{t^{m(J)} X_J})^*. 
\end{align}
Then,
\begin{align}
[\tau^2 X_0, (S_\tau^J)]^* = [\tau^2 X_0,S_\tau^{J'}]^* (\varphi_{\tau^{m(J)} X_J})^* + (S_\tau^{J'})^\ast [\tau^2 X_0, \varphi_{\tau^{m(J)} X_J}]^*. \label{eq:SstCsplit}
\end{align}
By the inductive hypothesis and Lemmas \ref{lem:Regphi} and Lemmas \ref{lem:phiXLpBds} we have  
\begin{align}
&\norm{[\tau^2 X_0,S_\tau^{J'}]^* (\varphi_{\tau^{m(J)} X_J})^*w}_{L^1}\\
 &\hspace{1in}\lesssim \sum_{I' \geq J'} \sup_{|t|\leq \tau^{m(I')}}\norm{e^{tX_{I'}^*}(\varphi_{\tau^{m(J)} X_J})^* w - (\varphi_{\tau^{m(J)} X_J} )^*w}_{L^1} + \tau \norm{(\varphi_{\tau^{m(J)} X_J})^* w}_{\Lambda^\sigma}\\
 &\hspace{1in}\leqc \sum_{I^\prime \geq J} \sup_{|t|\leq \tau^{m(I^\prime)}}\norm{e^{tX_{I^\prime}^*}w - w}_{L^1} +\tau \|w\|_{\Lambda^\sigma}.
\end{align}
This term in \eqref{eq:SstCsplit} is consistent with the desired result.

For the second term in \eqref{eq:SstCsplit}, by Lemma \ref{lem:RegsExpansion} we have (note the cancellation which eliminates the $k=0$ term) 
\[
	\begin{aligned}
	(S^{J^\prime}_\tau)^*[\tau^2 X_0,\varphi_{\tau^{m(J)}X_J}]^* &= \sum_{0 < k < N} (\tau^{m(I^\prime_k)} X_{I^\prime_k}S^{J^\prime}_\tau)^*(\hat{\varphi}^k_{\tau^{m(J)}X_J})^* + (\tau^{2 + Nm(J)}S^{J^\prime}_\tau)^*(R^N_{\tau^{m(J)}X_J})^*
	\end{aligned}
\]
where
\[
	X_{I_k^\prime} := \ad(-X_J)^kX_0, \quad\text{and}\quad  m(I^\prime_k) = 2 + km(J).
\]

The treatment of these terms is exactly the same as in the proof of \eqref{ineq:SstarMultReg} upon taking $2 + N m(J) \geq \frac{1}{\sigma}$. We omit the repetitive details for the sake of brevity. 
\end{proof}

Finally, we prove the required $L^\infty$ regularization estimate.
\begin{lemma}
Let $U \subset \cM$ be open, bounded geodesic ball. Let $I$ be any multi-index and $J \leq I$. Then, $\forall w \in C^\infty_c(U)$, 
\begin{align*}
\norm{\tau^{m(I)} X_I S_\tau^J  w}_{L^\infty} \lesssim_U \norm{w}_{L^\infty}  
\end{align*}
\end{lemma}
\begin{proof}
This is done by induction as in previous lemmas.
The case $J=\infty$ follows from \eqref{ineq:DRegPhiLp}.
Assume that the result holds for all $J'$ with $J' > J$ and write
\begin{align*}
S_\tau^J = \varphi_{\tau^{m(J)} X_J} S_\tau^{J'}. 
\end{align*}

\textbf{Case 1: $I > J$.} We apply the Taylor expansion of Lemma \ref{lem:RegsExpansion} (recalling definitions \eqref{def:phihatk} and \eqref{def:RNtX})
\[
	\tau^{m(I)}X_I S_\tau^J = \sum_{0 \leq k < N}\hat{\varphi}^k_{\tau^{m(J)}X_J}\tau^{m(I^\prime_k)}X_{I^\prime_k}S_\tau^{J^\prime} + \tau^{m(I) + Nm(J)} R_{\tau^{m(J)}X_J}^N S^{J^\prime}_\tau, 
\]
where
\[
	X_{I_k^\prime} := \ad(-X_J)^kX_I, \quad\text{and}\quad  m(I^\prime_k) = m(I) + km(J).
\]
Then we have,
\begin{align*}
\norm{\tau^{m(I)}X_I S_\tau^J w}_{L^\infty} & \lesssim \sum_{0 \leq \nu < N} \norm{\tau^{m(I^\prime_k)}X_{I^\prime_k}S_\tau^{J^\prime} w}_{L^\infty}  + \sup_{Z \in \mathfrak{X} :\norm{Z}_{C^3} \leq 1} \norm{\tau^{m(I) + Nm(J)} Z S_\tau^{J'} w}_{L^\infty}. 
\end{align*}
For $N$ chosen sufficiently large, the latter term is estimated by Lemma \ref{lem:DbdsRegs} an \eqref{ineq:DRegPhiLp} to produce
\begin{align*}
\sup_{Z \in \mathfrak{X} :\norm{Z}_{C^3} \leq 1} \norm{\tau^{m(I) + Nm(J)} Z S_\tau^{J'} w}_{L^\infty} \lesssim \norm{w}_{L^\infty}.  
\end{align*}
Since $I'_k \geq J'$ and hence the first $N$ terms in the summation are estimated by induction.

\textbf{Case 2: $I = J$.} In this case, the desired result follows immediately from Lemma \ref{lem:LinfRegVarphi}.
\end{proof}

\section{Projective hypoellipticity and irreducibility for Euler-like systems} \label{ProjBraks}

Our focus for the remainder of the paper is to prove Theorem \ref{thm:critForEulerLikeIntro} for the Lyapunov exponents of Euler-like systems (see \eqref{eq:SDEintroFD22}), 
as well as Corollary \ref{L96} for 
our concrete example, the Lorenz 96 (L96) model \eqref{def:L96intro}. 
Our tools, the Fisher information identity (Theorem \ref{thm:fishInfoIntro}) 
and accompanying hypoelliptic regularity estimate (Theorem \ref{thm:unifHypoRegEstIntro}), 
 depend on the unique existence of absolutely continuous stationary densities for 
projective processes (c.f. Assumptions \ref{ass:WP}, \ref{ass:WP2} and \ref{ass:PHC}). 
As discussed in Section \ref{sec:FI}, these can be reduced to checking 
H\"ormander's condition (Definition \ref{def:Hormander}) and topological irreducibility (Definition \ref{def:Irr})
for the projective process $w_t = (x_t, v_t)$ on $\S M$.
The primary aim of this section is to obtain useful sufficient conditions for checking these for Euler-like models in general, and then to verify them for the L96 model in particular. 

The plan is as follows: in Section \ref{sec:projSpanEulerLike} we present a sufficient condition (Corollary \ref{cor:sln-Lie-suff}) for the Euler-like class to satisfy projective bracket spanning; in Section \ref{sec:L96projSpanning}, we check this condition for the L96 model; and in Section \ref{sec:Irr} we show that topological irreducibility for Euler-like models can be \emph{reduced} to bracket spanning (Proposition \ref{prop:projective irreducibility}) under a natural cancellation-type condition satisfied by most models of interest (including the L96 and Galerkin Navier-Stokes equations). 

\subsubsection*{Standing assumptions}

For the remainder of the paper, we return to the setting of Euler-like SDE 
\begin{align}
\dee x_t^\epsilon = \big(B(x_t^\epsilon, x_t^\epsilon) + \epsilon A x_t^\epsilon \big) \dee t + \sqrt{\epsilon} \sum_{k =1}^r X_k \dee W_t^k \,, 
\end{align}
where as in Section \ref{sec:Intro} we assume $B$ is a nontrivial bilinear form on $\R^n, n \geq 1$
satisfying $\Div B = 0$ and $x \cdot B(x,x) = 0$; the $\{ X_k\}_{k = 1}^r$ are constant vector fields; and $A$ is a symmetric negative-definite $n \times n$ matrix. Recall the notation $X_0^\epsilon(x) := B(x,x) + \epsilon A x$. 

\subsection{Projective spanning for Euler-like systems}
\label{sec:projSpanEulerLike}

For many systems of interest, it can be significantly harder to verify H\"ormander's condition for $\{ \tilde X_k\}$ on $\S \R^n$ than to verify it for the vector fields $\{X_k\}$ on $\R^n$, for the simple reason that the dimension increases substantially ($\dim \S \R^n = 2 n -1$) and the fact that it is significantly harder to isolate `simple' vector fields on $\S \R^n$; this is already the case for the L96 model with additive noise (which we treat in Section \ref{sec:L96projSpanning} below). 

First, we show that for the class of Euler-like models \eqref{eq:SDEintroFD22}, projective spanning as in Assumption \ref{ass:PHC} can be reduced to a transitivity condition on a finite dimensional matrix Lie algebra.
This provides both a reasonable way to check projective spanning for \emph{fixed} dimension and parameters using a brute force computer calculation as well as a framework to begin analytical studies that work in arbitrary dimension and over entire ranges of parameters (such as what is carried out for Lorenz 96 in Section \ref{sec:L96projSpanning} and for Galerkin-Navier-Stokes in \cite{BPS21}).  
To make this more precise, we present here an argument for reducing projective spanning to a combination of 
\begin{itemize}
\item[(i)] the spanning condition for $\set{X_0^\epsilon, X_1,...,X_r}$ on $\R^d$; and
\item[(ii)] the \emph{purely linear} condition that $\mathfrak{sl}(\R^n)$ is generated by a collection $\{H^i\}$ of \emph{constant-valued} $n\times n$ real matrices (defined explicitly in terms of derivatives of the nonlinearity $B(x,x)$) under the standard matrix Lie bracket. 
\end{itemize}

By Proposition \ref{prop:class-proj-span}, we know that verifying the parabolic H\"ormander condition for $\{\tilde{X}_0^\ep,\tilde{X}_1,\ldots,\tilde{X}_r\}$ on $\S \R^n = \R^n\times \S^{n-1}$ is equivalent to checking that $\{X_0^\ep, X_1,\ldots,X_r\}$ satisfies the parabolic H\"ormander condition on $\R^n$ and that the matrix Lie algebra $\mathfrak{m}_x(X_0^\ep;X_1,\ldots,X_r)$ defined by $\eqref{eq:m-lie-alg-def}$ satisfies the transitivity condition
\[
	\{Av - v\langle Av,v\rangle \,:\, A \in \mathfrak{m}_x(X_0^\ep;X_1,\ldots,X_r)\} = T_v\S^{n-1}
\]
for each $(x,v)\in \R^n\times \S^{n-1}$. In general it is a challenge to directly work with $\mathfrak{m}_x(X_0^\ep;X_1,\ldots,X_r)$ as it is not a simple task to classify all vector fields in $\mathrm{Lie}(X_0^\ep;X_1,\ldots X_r)$ that vanish at each $x\in \R^n$. However, in $\R^n$ with additive noise it is often the case that the parabolic Lie algebra generated by $\{X_k\}_{k=0}^r$ contains a spanning collection of constant vector fields $\{\partial_{x_k}\}_{k=1}^n$ (this is often how parabolic H\"ormander as in Definition \ref{def:Hormander} is checked on $\R^n$). In this case, $\mathfrak{m}_x$ can be described more explicitly.
\begin{lemma} \label{lem:consVs}
Let $\{Z_k\}_{k=0}^r \subseteq \mathfrak{X}(\R^n)$ and suppose that $\mathrm{Lie}(Z_0;Z_1,\ldots,Z_r)$ contains the constant vector fields $\{\partial_{x_k}\}_{k=1}^n$. Then
\[
	\mathfrak{m}_x(Z_0;Z_1,\ldots,Z_r) = \{M_Z(x)\,:\, Z \in \mathrm{Lie}(Z_0;Z_1,\ldots,Z_r)\}.
\]
\end{lemma}
\begin{proof}
Our hypothesis $ \{\partial_{x_k}\}_{k=1}^n \subseteq \mathrm{Lie}(X_0;X_1,\ldots,X_r)$ implies that for each $X\in \mathrm{Lie}(X_0;X_1,\ldots,X_r)$ and $x\in \R^n$, the vector field $\hat{X} = X - X(x)$ also belongs to $\mathrm{Lie}(X_0;X_1,\ldots,X_r)$ and satisfies $\hat{X}(x) = 0$. Since $\nabla \hat X = \nabla X$, we have $M_{\hat X}(x) = M_X(x)$, hence $M_X(x)\in \mathfrak{m}_x$.
\end{proof}

Using that $X_0^\epsilon(x) = B(x,x) + \epsilon A x$ allows for further simplification. Define for each $k=1,\ldots,n$ the linear operator
\[
	H^k := \nabla[\partial_{x_k},X_0^\ep] = \partial_{x_k}\nabla B = M_{[\partial_{x_k},X_0^\ep]} \in \mathfrak{sl}(\R^n). 
\]
 Note that $H^k$ is independent of both $x\in M$ and $\ep$. 
Below, $\mathrm{Lie}(H^1,\ldots,H^n)$ denotes the matrix Lie subalgebra
of  $\mathfrak{sl}(\R^n)$ generated by $\{H^1, \cdots, H^n\}$. It is a simple matter to check that if $\{\partial_{x_k}\}_{k=1}^n \subseteq \mathrm{Lie}(X_0^\ep;X_1,\ldots,X_r)$, then one has
\[
    \mathrm{Lie}(H^1,\ldots,H^n) = \mathfrak{m}_x(X_0^\ep;X_1,\ldots,X_r).
\] 
Lemma \ref{lem:consVs} now yields the following. 

 \begin{corollary}\label{cor:sln-Lie-suff}
 Assume (i) $\{\partial_{x_k}\}_{k=1}^n \subseteq \mathrm{Lie}(X_0^\ep;X_1,\ldots,X_r)$ and (ii)
that
 \begin{equation}\label{eq:Lie-H-suff-cond}
\mathrm{Lie}(H^1,\ldots,H^n) = \mathfrak{sl}(\R^n) \, .
\end{equation}
 Then, $\{\tilde{X}_0^\ep,\tilde{X}_1,\ldots,\tilde{X}_r\}$ satisfy the uniform parabolic H\"ormander condition in the sense of Definition \ref{def:UniHormander} as $\epsilon$ is varied in $(0,1]$.
\end{corollary}
\begin{proof}{}
By Lemma \ref{lem:consVs} and Proposition \ref{prop:class-proj-span} it suffices to check that 
\[
\{ A v - v\langle v , A v \rangle : A \in \mathfrak{sl}(\R^n) \} = T_v \S^{n-1}.
\]
holds. This is true since $\mathfrak{sl}(\R^n)$ acts transitively on $\R^n\backslash\{0\}$, i.e., for all $w,v \in \R^n\backslash\{0\}$
there exists $A \in \mathfrak{sl}(\R^n)$ such that $A v = w$ (see, e.g., \cite{boothby1979determination} for a complete classification of transitive algebras on $\R^n\backslash\{0\}$). In particular, this implies transitive action of $V_A$ on $\S^{d-1}$ since for each $v$, the projection of a spanning set in $\R^n$ onto the subspace $T_v\S^{n-1} = v^\perp = \{ v^\prime\in \S^{n-1}\,:\, \langle v,v^\prime\rangle =0\}$ is still spanning. Indeed, is is not hard to show that the smaller subalgebra $\mathfrak {so}(\R^n)$ of skew symmetric matrices also acts transitively on $\S^{n-1}$ (though not on $\R^n\backslash\{0\}$), and so 
the conclusion of this Corollary holds under the following weakening of \eqref{eq:Lie-H-suff-cond}: 
\begin{align*}
\mathfrak{so}(\R^n) \subseteq \mathrm{Lie}(H^1,\ldots,H^n) \, ,
\end{align*}
c.f. Remark \ref{rem:so-remark}. 
\end{proof}

\begin{remark}\label{rmk:genSetsldR}
Let us comment briefly on how one might verify \eqref{eq:Lie-H-suff-cond}. 
Since $\mathfrak{sl}(\R^n)$ is $n^2 -1$ dimensional, it is clear that one must use commutators that go several generations deep if one has any hope of generating $\mathfrak{sl}(\R^n)$. However, it can
simplify things to instead look to build a suitable generating set for $\mathfrak{sl}(\R^n)$ out of
brackets of $H^i$'s. 
A particularly useful generating set for $\mathfrak{sl}(\R^n)$ is the collection of elementary matrices
\[
	E^{1,2},E^{2,3},\ldots,E^{n,1},
\]
where $E^{i,j}$ is the matrix with $1$ in $(i,j)$ entry and $0$ elsewhere. For these, we have the commutation relation
\[
	[E^{i,j},E^{k,\ell}] = E^{i,\ell}\delta_{j,k} - E^{k,j}\delta_{\ell,i},
\]
so that, e.g., 
\[
	[E^{1,2},E^{2,3}]= E^{1,3} \quad \text{ and } 	[E^{1,2},E^{2,1}] = E^{1,1} - E^{2,2} \, .
\]
Continuing like this allows to generate the off-diagonal matrices $\{E^{i,j}\}_{i\neq j}$ as well as 
the directions $E^{1,1} - E^{2,2},\ldots E^{n,n} - E^{n-1,n-1}$ needed to complete a basis for $\mathfrak{sl}(\R^n)$. Therefore, $\set{E^{1,2},E^{2,3},\ldots,E^{n,1}}$ generates $\mathfrak{sl}(\R^n)$.
\end{remark}

\subsection{Projective spanning for Lorenz 96}\label{sec:L96projSpanning}
Now we turn to verifying the uniform projective spanning for stochastically forced Lorentz 96 \eqref{def:L96intro}.
Recall the stochastic Lorenz 96 is an SDE on $\R^{J}$ defined by 
\begin{equation}\label{eq:L96-second}
\dee u_\ell = (u_{\ell + 1}  - u_{\ell-2}) u_{\ell-1}\dt - \ep u_\ell\dt + \sqrt{\ep}q_\ell \dee W_t^{\ell}. 
\end{equation}
Here, we assume a periodic ensemble of coupled oscillators, i.e., $u_{i+k J} := u_i$.
Naturally we can write \eqref{eq:L96-second} in the general form \eqref{eq:SDEintroFD22} for Euler-like SDE by defining $X_0(u) = B(u,u) + \ep Au$, where the $\ell$-th coordinate of $F(u) := B(u,u)$ is given by
\[
	F_\ell(u) = u_{\ell+1}u_{\ell -1} - u_{\ell-2}u_{\ell-1}, \quad (Au)_\ell = u_\ell \, .
\]

First, we verify uniform hypoellipticity for the process $(u_t)$ on $\R^J$. 
\begin{lemma}
Let $J \geq 2$ and assume $q_1, q_2 \neq 0 $.  Then $\mathrm{Lie}(F; q_1\partial_{u_1},q_2\partial_{u_2})$ contains $\{\partial_{u_j}\}_{j=1}^J$ and spans $\R^J$ uniformly in $\epsilon$ on compact sets.
\end{lemma}
\begin{proof}
Since the nonlinearity is bilinear, we readily observe that
\begin{align*}
[\partial_{u_2},[\partial_{u_1}, F]] & = -\partial_{u_3}. 
\end{align*}
Iterating this observation allows to generate all brackets of the form $[\partial_{u_{i+1}},[\partial_{u_i}, F]] = -\partial_{u_{i+2}}$.
\end{proof}

In order to prove uniform projective spanning we first observe that
\[
	(\nabla F(u))_{\ell m} = DF_\ell(u)_m = u_{\ell-1}\delta_{m=\ell+1} + u_{\ell+1}\delta_{m=\ell} - u_{\ell -1}\delta_{m=\ell-2} - u_{\ell-2}\delta_{m=\ell-1},
\]
hence it follows that for each $k \in \{1,\ldots,J\}$ we have
\[
	H^k = \partial_{u_k} DF(u) = E^{k+1,k+2} + E^{k-1,k-2} - E^{k+1,k-2} - E^{k+2,k+1}.
\]
The following implies projective spanning for Lorenz-96 when combined Corollary \ref{cor:sln-Lie-suff}. 
\begin{lemma}
\[
	\mathrm{Lie}(H^1,\ldots,H^J) = \mathfrak{sl}(\R^J) \,. 
\]
\end{lemma}
\begin{proof}
Throughout, we regard the indices in $E^{i,j}$ modulo $J$, so that $E^{i + k J, j + \ell J} = E^{i,j}$ for all $i,j,k,\ell$. 

Let $\mathfrak g$ denote the smallest Lie algebra containing $\{ H^k\}$. To start, let $1 \leq k \leq J$. We compute
\[
[H^k, H^{k + 4}] = E^{k +3,k+1} \, ,
\]
hence $E^{k, k-2} \in \mathfrak g$ for all $1 \leq k \leq J$. Continuing,
\[
[H^k, E^{k-2, k-4}] = E^{k-1, k-4} \, , 
\]
hence $E^{k, k-3} \in \mathfrak g$ for all $k$. Inductively, assuming $E^{k, k - \ell} \in \mathfrak g$, we have that
\begin{align}\label{eq:inductBracketL96}
[H^k, E^{k-2, k - (\ell + 2)}] = E^{k-1, k-1-(\ell + 1)} \, , 
\end{align}
hence $E^{k, k - (\ell + 1)} \in \mathfrak g$ for all $k$. The induction step in \eqref{eq:inductBracketL96}
 continues to hold as long as $k - (\ell + 2)$ is disjoint from $\{ k-1, k+1, k+2\}$ modulo $J$, which is assured
 so long as $\ell < J - 4$. 
 
 Fix $\ell_0 \in \{ 2,3, \cdots, J-5\}$ so that $J - \ell_0$ is co-prime to $J$. 
  In particular, $\{ 1-\ell_0, 1-2 \ell_0, \cdots, 1-(J-1) \ell_0\}$ coincides
 with the complete set of residue classes $\{0, 1, \cdots, J-1\}$ in $\Z / J \Z$. 
We observe that 
\[
\set{E^{1,1-\ell_0}, E^{1-\ell_0, 1-2 \ell_0}, \cdots, 
 E^{1 - (J-2) \ell_0, 1 - (J-1) \ell_0}, E^{1 - (J-1) \ell_0, 1}} \subset \mathfrak g
 \]
 is a re-ordering of the generating set identified in Remark \ref{rmk:genSetsldR},
and conclude $\mathfrak g = \mathfrak{sl}(\R^J)$. 
\end{proof}

\begin{remark}
We note that the proof presented here for L96 heavily relies on the ``local'' coupling of unknowns in the nonlinearity, which greatly simplifies the application of Corollary \ref{cor:sln-Lie-suff} in this case. However, for models which are the Galerkin truncations of PDEs, coupling between unknowns has a more `global' character, and so even with the simplifications presented here, 
verifying projective spanning takes significantly more work. 
For Galerkin Navier-Stokes, this is the topic of the recent paper \cite{BPS21} of the first and third author. 
\end{remark}

\subsection{Projective irreducibility for Euler-like systems} \label{sec:Irr}

We close this section with a criterion for irreducibility of the projective process $w_t = (x_t, v_t)$ for 
our class of Euler-like systems. In what follows, we will assume that the nonlinearity $B(x,x)$ satisfies the following {\em cancellation property} in addition to volume conservation $\Div B(x,x) = 0$ and $x\cdot B(x,x)=0$:
 
\begin{assumption}\label{ass:bilinear2}
There exists a collection of vectors $\{e_1,\ldots e_s\}\subset \R^n$ with \[
	\Span\{e_1,\ldots e_s\} = \Span\{X_1,\ldots,X_r\}
	\]
	such that for each $1\leq k \leq s$, $B(e_k,e_k) = 0$.
\end{assumption}
Systems for which Assumption \ref{ass:bilinear2} hold include the Lorenz-96 model \eqref{def:L96intro} and Galerkin truncations of the Navier-Stokes equations. Cancellation properties for even degree nonlinearities are commonplace when dealing with controllability and also feature prominently in the works of e.g. \cite{Romito2011-ul,Herzog2015-vj,Glatt-Holtz2018-wx}.

The main result of the section shows that projective spanning combined with the cancellation condition in Assumption \ref{ass:bilinear2} is sufficient to imply the projective process $(w_t)$ is irreducible. This is commonly used as a vital ingredient in obtaining uniqueness of the stationary measure via the Doob-Khasminskii theorem \cite{DPZ96}. 
\begin{proposition}\label{prop:projective irreducibility}
Assume the bilinear term $B$ satisfies Assumption \ref{ass:bilinear2}. 
Then, the projective process $(w_t)$ is irreducible for all $\eps> 0$ if $\{\tilde{X}_0^\ep,\tilde{X}_1,\ldots,\tilde{X}_r\}$ satisfies the parabolic H\"ormander condition $\forall \epsilon > 0$. 
\end{proposition}

\subsubsection{Preliminaries on geometric control theory}

To prove Proposition \ref{prop:projective irreducibility}, we use the Stroock-Varadhan support theorem \cite{Stroock1972-nc} to connect irreducibility to exact controllability (see Theorem \ref{thm:support} below). 
We first outline some basic ideas from geometric control theory, for which we mostly follow \cite{JurdGCT}.
Consider the following affine control system 
\begin{align}
	\dot{x}_t = X_0(x_t) + \sum_{k=1}^r X_k(x_t) \alpha^k_t \label{eq:control-sys}
\end{align}
on a manifold $M$, where $(\alpha^1_t,\ldots,\alpha^r_t)$ are piecewise constant controls taking values in $\R$ and $\{X_0,X_1,\ldots,X_r\}$ are smooth vector fields in $\mathfrak{X}(M)$. In what follows, let $\mathcal{X} = \mathrm{span}\{X_1,\ldots,X_r\}$ and define the controlled distribution of vector fields
\begin{align*}
\mathcal{F}_0 := X_0 + \mathcal{X} = \{X_0+ X\,:\, X\in\mathcal{X}\} \subseteq \mathfrak{X}(M).
\end{align*} 
We also define the time $t>0$ accessible set starting from $x\in M$ associated to $\mathcal{F}_0$ by
\begin{align*}
\mathcal{A}_x^t(\mathcal{F}_0) := \left\{e^{t_nY_n}\ldots e^{t_1Y_1}x\,:\, Y_i\in \mathcal{F}_0,\quad t_i> 0,\quad \sum t_i = t,\quad n\geq 1\right\}\subseteq M,
\end{align*}
where for a vector field $X\in \mathfrak{X}(M)$, $e^{t X}:M\to M$ denotes the flow of diffeomorphisms associated to $X$. 
Note that the set $\mathcal{A}_x^t(\mathcal{F}_0)$ is exactly the set of all points accessible by \eqref{eq:control-sys} at time $t>0$ starting from $x\in M$ using piecewise constant controls. 
We then say that $\mathcal{F}_0$ is {\em exactly controllable} if for every $x$ and $t>0$ we have $\mathcal{A}_x^t(\mathcal{F}_0) = M$.
The following celebrated theorem due to Stroock and Varadhan \cite{Stroock1972-nc} links exact controllability to irreducibility.
\begin{theorem}[Support Theorem; \cite{Stroock1972-nc}]\label{thm:support} Suppose that the SDE
\[
	\dee x_t = X_0(x_t)\,\dt + \sum_{k=1}^r X_k(x_t)\strat \dee W^k_t
\]
has a globally defined flow on a manifold $M$ and that $\mathcal{F}_0 = X_0 + \Span\{X_k\,:\, k=1\ldots,r\}$ is exactly controllable, then for each $x\in M$ and $t>0$, and every open $O\subset M$
\[
	P_t(x,O) = \P(x_t\in O\,|\, x_0 = x) >0.
\]
In other words, exact controllability implies that the process $(x_t)$ is topologically irreducible.
\end{theorem}
Due to the rigidness of the time constraint $\sum_{k}t_i = t$, it is convenient to define the relaxed accessibility set of states reached by $\mathcal{F}_0$ before time $t$
\[
\mathcal{A}_x^{\leq t}(\mathcal{F}_0) := \bigcup_{s\leq t} \mathcal{A}_x^s(\mathcal{F}_0).
\]
We say that $\mathcal{F}_0$ is {\em strongly controllable} if for each $x\in M$ and $t>0$, $\mathcal{A}_x^{\leq t}(\mathcal{F}_0) = M$.

In order to go from strong to exact controllability, an important role is played by the {\em zero-time ideal} $\mathcal{I}(\mathcal{F})$ of $\mathrm{Lie}(\mathcal{F})$, defined for any collection of vector fields $\mathcal{F}\subseteq \mathfrak{X}(M)$ by
\[
\mathcal{I}(\mathcal{F}) := \mathrm{span}\{X-Y\,:\, X, Y \in \mathcal{F}\} + \mathcal{D}(\mathcal{F}),
\]
where $\mathcal{D}(\mathcal{F})$ is the {\em derived algebra} ideal generated by $\mathcal{F}$, defined as the algebra of iterated brackets at least one bracket deep
\[
\mathcal{D}(\mathcal{F}) := \mathrm{span}\{\mathrm{ad}(Y_r)\ldots\mathrm{ad}(Y_2)Y_1\,:\, Y_i\in \mathcal{F},\, r\geq 2 \}.
\]
One can check that $\mathcal{I}(\mathcal{F})$ is a Lie algebra ideal of $\mathrm{Lie}(\mathcal{F})$ in the sense that if $X\in \mathrm{Lie}(\mathcal{F})$ and $Y\in \mathcal{I}(\mathcal{F})$ then $[X,Y]\in \mathcal{I}(\mathcal{F})$. The follow results connects the zero-time ideal to exact controllability.

\begin{theorem}[Theorem 13 Chapter 3 \cite{JurdGCT}] \label{thm:strong-control}
	Suppose that $\mathcal{F}$ is strongly controllable, then it is exactly controllable if for each $x\in M$ 
	\[
	\mathcal{I}_x(\mathcal{F}) := \{X(x)\,:\, X\in \mathcal{I}(\mathcal{F})\} = T_xM.
	\]
\end{theorem}

For the controlled distribution of vector fields $\mathcal{F}_0 = X_0 +\mathcal{X}$, we have the following useful characterization of $\mathcal{I}(\mathcal{F}_0)$ that intricately links exact controllability to the parabolic H\"ormander condition. It is certainly known among experts in control theory, but we could not find a proof in the literature.
\begin{proposition}
	If $\mathcal{F}_0 = X_0 +\mathcal{X}$, where $\mathcal{X} = \mathrm{span}\{X_1,\ldots,X_r\}$, we have
	\[
	\mathcal{I}(\mathcal{F}_0) = \mathrm{Lie}(\mathcal{X},[\mathcal{X},X_0]).
	\]
\end{proposition}
\begin{proof}
	First we note that
	\[
	\mathrm{span}\{X-Y\,:\, X, Y \in \mathcal{F}_0\} = \mathrm{span}\{X-Y\,:\, X,Y\in\mathcal{X}\} = \mathcal{X}.
	\]
	so that $\mathcal{I}(\mathcal{F}_0) = \mathcal{X} + \mathcal{D}(\mathcal{F})$. Next, we note that we can describe $\mathrm{Lie}(\mathcal{X},[\mathcal{X},X_0])$ as
	\[
	\begin{aligned}
		\mathrm{Lie}(\mathcal{X},[\mathcal{X},X_0]) &= \mathrm{span}\{\mathrm{ad}(Y_r)\ldots\mathrm{ad}(Y_1)X\,:\, Y_i\in\mathrm{span}\,\mathcal{F}_0,\, X\in \mathcal{X},\, r\in \N\}\\
		&=\mathcal{X} + \mathrm{span}\{\mathrm{ad}(Y_r)\ldots\mathrm{ad}(Y_1)X\,:\, Y_i\in\mathrm{span}\,\mathcal{F}_0,\, X\in \mathcal{X},\, r\geq 1\},
	\end{aligned}
	\]
	since for any $X \in \mathcal{X}$ we can apply the Jacobi identity
	\[
	\mathrm{ad}([X_0,X]) = \mathrm{ad}(X_0)\mathrm{ad}(X) - \mathrm{ad}(X)\mathrm{ad}(X_0).
	\]
	The proof is complete upon realizing that $\mathcal{D}(\mathcal{F}_0)$ can also be described by
	\[
	\mathcal{D}(\mathcal{F}_0)= \mathrm{span}\{\mathrm{ad}(Y_r)\ldots\mathrm{ad}(Y_1)X\,:\, Y_i\in\mathrm{span}\,\mathcal{F}_0,\, X\in \mathcal{X},\, r\geq 1\}
	\]
	which follows from the fact that allowing $X$ to take values in $\mathrm{span}\,\mathcal{F}_0 = \mathrm{span}\{X_0,X_1,\ldots,X_r\}$ instead of $\mathcal{X}$ doesn't change the span above since $\mathrm{ad}(X_0)X_0= 0$ and
$\mathrm{ad}(X_i)X_0 = - \mathrm{ad}(X_0)X_i$.
\end{proof}

In order to prove strong controllability, we will use the notion of a Lie saturate originally introduced by Jurdjevic and Kupka \cite{JK81}. Following \cite{JurdGCT}, we introduce an equivalence class on subsets of $\mathfrak{X}(M)$ induced by the mapping $\mathcal{F} \mapsto \mathrm{cl}(\mathcal{A}^{\leq t}_x(\mathcal{F}))$, where $\mathrm{cl}(A)$ denotes the closure of a set $A\subseteq M$. Let $\mathcal{F},\mathcal{G}\subseteq \mathfrak{X}(M)
$, we say that $\mathcal{F}$ and $\mathcal{G}$ are equivalent, denoted by $\mathcal{F}\sim \mathcal{G}$, if for each $x\in M$ and $t>0$
\[
\mathrm{cl}(\mathcal{A}_x^{\leq t}(\mathcal{F}))= \mathrm{cl}(\mathcal{A}_x^{\leq t}(\mathcal{G})).
\]
It is easy to see that if $\mathcal{G} \sim \mathcal{F}$ then $\mathcal{F} \sim \mathcal{F}\cup \mathcal{G}$. This naturally motivates the definition of the {\em Lie saturate} of a family $\mathcal{F}$ defined by
\[
\mathrm{LS}(\mathcal{F}) := \bigcup_{\mathcal{G}\sim \mathcal{F}} \mathcal{G}\cap \mathrm{Lie}(\mathcal{F}).
\]
Note that we always have $\mathcal{F}\sim \mathrm{LS}(\mathcal{F})$. 

We have the following very important theorem relating spanning properties of the Lie saturate to strong controllability.

\begin{theorem}[Theorem 12 Ch 3 \cite{JurdGCT}]
	$\mathcal{F}_0= X_0 + \mathcal{X}$ is strongly controllable if for every $x$
	\[
	\mathrm{LS}_x(\mathcal{F}_0) := \{X(x)\,:\, X\in \mathrm{LS}(\mathcal{F}_0)\} = T_xM.
	\]
\end{theorem}

An important corollary of this is the following sufficient condition for exactly controllability, which will prove useful for what follows.

\begin{corollary}\label{cor:sufficient-control}
	Suppose that $X_0,\mathcal{X}$ satisfy the parabolic H\"ormander condition. Then $\mathcal{F}_0 = X_0 + \mathcal{X}$ is exactly controllable if
	\[
	\mathrm{Lie}(\mathcal{X},[\mathcal{X},X_0]) \subseteq \mathrm{LS}(\mathcal{F}_0).
	\]
\end{corollary}

In light of this theorem, our goal will usually be to show that $\mathrm{LS}(\mathcal{F}_0)$ contains $\mathrm{Lie}(\mathcal{X},[\mathcal{X},X_0])$. Typically this is done by repeatedly ``enlarging'' the initial set $\mathcal{F}_0$ by vector fields that are equivalent to $\mathcal{F}_0$.
The Lie saturate has the following very useful symmetry properties
\begin{proposition}[Proposition 2.24 \cite{Jurdjevic2016-cr}]\label{prop:saturate-growth} The Lie saturate $\mathrm{LS}(\mathcal{F})$ is convex and closed in the $C^\infty(M)$ topology and is invariant under the following enlargements:
	\begin{enumerate}
		\item If $\mathcal{F}\subseteq \mathrm{LS}(\mathcal{F})$, then the convex hull $\mathrm{co}(\mathcal{F})$ also belongs to $\mathrm{LS}(\mathcal{F})$.
		\item If $\mathcal{V}\subseteq \mathrm{LS}(\mathcal{F})$ is a vector space then $\mathrm{Lie}(\mathcal{V})$ also belongs to $\mathrm{LS}(\mathcal{F})$.
		\item If $\pm X$ belongs to $\mathrm{LS}(\mathcal{F})$, then the pushforward \footnote{the push forward $\phi_\sharp Y$ of a vector field $Y$ by a diffeomorphism $\phi$ is defined by $(\phi_\sharp Y)(y) := d\phi_x(Y(x))$, where $x = \phi^{-1}(y)$.} $(e^{\alpha X})_\sharp\mathcal{F}$ also belongs to $\mathrm{LS}(\mathcal{F})$ for each $\alpha \in \R$.
	\end{enumerate}
	
\end{proposition}

\begin{remark}
	One can think of the pushforward $(e^{\alpha X})_\sharp Y$ as an infinite dimensional version of the adjoint action $e^{\alpha\,\mathrm{ad}(X)}$ by the group element $e^{\alpha X}\in \mathrm{Diff}(M)$ on $\mathfrak{X}(M)$ in the sense that $(e^{\alpha X})_\sharp Y$ satisfies the following Taylor expansion
	\[
	(e^{\alpha X})_\sharp Y = \sum_{k=0}^\infty \frac{\mathrm{ad}(\alpha X)^k Y}{k!},
	\]
	which expresses $(e^{\alpha X})_\sharp$ as a Lie polynomial in $\mathrm{ad}(\alpha X)$.
\end{remark}

\subsubsection{Proof of Proposition \ref{prop:projective irreducibility}}

We now want to apply this machinery to the bilinear system \eqref{eq:SDEintroFD22} to prove Proposition \ref{prop:projective irreducibility}. In what follows, let
\[
	X_0(x) := B(x,x) + \ep Ax
\]
and $\mathcal{X} := \Span\{X_k\,:\, k=1\ldots,r\}$. In light of the assumption of the parabolic H\"ormander condition, Corollary \ref{cor:sufficient-control}, and Theorem \ref{thm:support}, all we need to do is show that $\mathrm{Lie}(\widetilde{\mathcal{X}},[\widetilde{\mathcal{X}},\widetilde{X}_0]) \subseteq \mathrm{LS}(\widetilde{\mathcal{F}}_0)$.

\begin{proof}
	Lemma \ref{lem:lifting-isomorph} shows that $X\to \widetilde{X}$ is a Lie algebra isomorphism on to $\widetilde{\mathfrak{X}}(M) = \{\widetilde{X}\,:\, X\in\mathfrak{X}(M) \}$, and so $[\tilde{X},\tilde{Y}]$ is the same as the lift of $[X,Y]$.  
	
	By property 1 of Proposition \ref{prop:saturate-growth}, for each $\widetilde{X}\in \widetilde{\mathcal{X}}$ and $\alpha\in [0,1]$, we have $\alpha(\widetilde{X}_0 + \alpha^{-1} \widetilde{X}) = \alpha \widetilde{X}_0 + \widetilde{X} \in \mathrm{LS}(\widetilde{\mathcal{F}}_0)$. By closedness of $\mathrm{LS}(\widetilde{\mathcal{F}}_0)$, it follows that for each $\widetilde{X}\in\widetilde{\mathcal{X}}$
	\[
	\widetilde{X} = \lim_{\alpha \to 0}(\alpha \widetilde{X}_0 + \widetilde{X}) \in \mathrm{LS}(\widetilde{\mathcal{F}}_0).
	\]
	We conclude that $\widetilde{\mathcal{X}} \in \mathrm{LS}(\widetilde{\mathcal{F}}_0)$. 
	By the spanning of the $\set{e_k}_{k=1}^s$ in Assumption \ref{ass:bilinear2}, it then follows that for all $k=1\ldots,s$, the constant vector fields $\pm\widetilde{e}_k\in \mathrm{LS}(\widetilde{\mathcal{F}}_0)$. 
	Then, from property $3$ of Proposition \ref{prop:saturate-growth}, and the fact $\widetilde{X}_0$ is a polynomial in $x$ of degree two, $\forall \alpha \in\R$ and $k\in\mathcal{K}$, the following vector field is also in $\mathrm{LS}(\widetilde{\mathcal{F}}_0)$ 
	\[
	(e^{\alpha\widetilde{e}_k})_\sharp \widetilde{X}_0 = \widetilde{X}_0 + \alpha [\widetilde{e}_k,\widetilde{X}_0] + \frac{\alpha^2}{2}[\widetilde{e}_k,[\widetilde{e}_k,\widetilde{X}_0]].
	\]
	Using the Lie algebra isomorphism (Lemma \ref{lem:lifting-isomorph})  $[\widetilde{e}_k,[\widetilde{e}_k,\widetilde{X}_0]] =[e_k,[e_k,X_0]]\,\,\widetilde{}$ and cancellation condition of Assumption \ref{ass:bilinear2} that $[e_k,[e_k,X_0]] = B(e_k,e_k) = 0$, we deduce that $[\widetilde{e}_k,[\widetilde{e}_k,\widetilde{X}_0]]=0$. 
	Therefore for each $\lambda \in \R$
	\[
	\lim_{\alpha \to \infty} \frac{1}{\alpha} (e^{\lambda\alpha\widetilde{e}_k})_\sharp\widetilde{X}_0 = \lambda[\widetilde{e}_k,\widetilde{X}_0] \in \mathrm{LS}(\widetilde{\mathcal{F}_0}).
	\]
	It follows by property 1 of Proposition \ref{prop:saturate-growth}, that $\mathrm{span}\{\widetilde{\mathcal{X}},[\widetilde{\mathcal{X}},\widetilde{X}_0]\}$ is contained in $\mathrm{LS}(\widetilde{\mathcal{F}}_0)$ and so by property 2 of Proposition \ref{prop:saturate-growth}
	\[
	\mathrm{Lie}(\widetilde{\mathcal{X}},[\widetilde{\mathcal{X}},\widetilde{X}_0]) \subseteq \mathrm{LS}(\widetilde{\mathcal{F}}_0).
	\]
	Exact controllability now follows from Corollary \ref{cor:sufficient-control} and irreducibility follows from Theorem \ref{thm:support}, completing the proof of Proposition \ref{prop:projective irreducibility}. 
\end{proof}

\begin{remark}
The spanning property $\mathfrak{m}_x(\mathcal{F}) = \mathfrak{sl}(\R^n)$ actually implies a much stronger form of controllability for the linearized process on $SL(\R^d)$.
Indeed, let  
\begin{align*}
\hat{J}_t = \frac{D_x \Phi^t}{ \mathrm{det} \left(D_x \Phi^t\right)^{1/n}}\in SL(\R^n), 
\end{align*}
then the normalized Jacobian Markov process is given by $Z_t = (x_t,\hat{J}_t) \in \R^n \times SL(\R^n)$ and solves an SDE system 
\begin{align}
\dee Z_t = \hat{X}_0(Z_t)\dt + \sum_{k=1}^r \hat{X}_k(Z_t) \circ \dee W_t^{(k)}, \label{eq:MXproc} 
\end{align}
defined by the lifts
\begin{align*}
\hat{X}(x,\hat{J}) = (X(x),M_X(x)\hat{J}),
\end{align*}
where $M_X(x)\in \mathfrak{sl}(\R^n)$ is defined above in \eqref{def:MX}. 
The assumption of Corollary \ref{cor:sln-Lie-suff} then implies that this lifted process is hypoelliptic on $\R^n \times SL(\R^n)$. 
The proof of Proposition \ref{prop:projective irreducibility} extends to this process analogously as to the projective process (replacing vector fields $\tilde{X}$ with $\hat{X}$), hence yielding exact controllability of the associated affine control problem as well as irreducibility of the SDE system \eqref{eq:MXproc} through Theorem \ref{thm:support}.  
\end{remark}

\section{Rigidity of invariant measures of the deterministic, projective process} \label{sec:Inviscid-Limit}

\newcommand{\pd}{\partial}
\renewcommand{\a}{\alpha}
\newcommand{\dist}{\operatorname{dist}}
\newcommand{\Gc}{{\mathcal G}}

In this section, we will complete the proof of Theorem \ref{thm:critForEulerLikeIntro} on the Lyapunov exponents of 
Euler-like systems and Corollary \ref{L96} for the L96 system. 
Throughout, $\Phi^t_\omega$ is the stochastic flow of diffeomorphisms
corresponding to an Euler-like SDE as in standing assumptions at the beginning of Section \ref{ProjBraks}. 
Lastly, we assume that the SDE satisfies the H\"ormander bracket spanning condition uniformly in $\epsilon \in (0,1)$, and that the corresponding projective process $w_t^\epsilon = (x_t^\epsilon, v_t^\epsilon)$ admits a unique stationary density $f^\epsilon$ on $\S \R^n$ (see Section \ref{sec:Irr} for sufficient conditions).  

The plan for this section is as follows. In Section \ref{subsec:dich6} we carry out the main argument in the 
proof of Theorem \ref{thm:critForEulerLikeIntro} and Corollary \ref{L96}, which we reduce to checking Proposition \ref{prop:mainRigidityIntro} below. 
This proposition is checked in Sections \ref{sec:prelimEuler5}, \ref{sec:sec51}, and \ref{sec:53}.

\subsection{Dichotomy: rigidity or positive Lyapunov exponents}\label{subsec:dich6}

Assumptions \ref{ass:WP} and \ref{ass:WP2}, as well the moment estimates needed 
for the application of Theorem A (see Proposition \ref{prop:FI-gen-intro}), are checked for Euler-like systems
in Theorem \ref{thm:BasicProjDensity} in the Appendix. 
Theorem \ref{thm:non-random-MET} ensures the existence of the top Lyapunov exponent $\lambda_1$
and the sum Lyapunov exponent $\lambda_\Sigma$. 
Applying Theorem \ref{thm:fishInfoIntro}, the Fisher information identity \eqref{eq:gen-FI-ID} reads as follows: 
\begin{align}\label{eq:fisherInfoIdentity11}
FI(f^\epsilon)= \frac{n \lambda_1^\ep}{\ep} - 2\tr A 
\end{align}
This is immediate from Proposition \ref{prop:FI-gen-intro} on noting that $\lambda_\Sigma = \epsilon \tr A$ by Theorem \ref{thm:non-random-MET}. 
Theorem \ref{thm:unifHypoRegEstIntro} implies 
\begin{align}\label{hypoforbilinear}
\| f^\epsilon \|_{W^{s ,1}(U \times \S^{n-1})}^2 \leq C \left(1 + FI(f^\epsilon) \right),
\end{align}
for any $U \subset \R^n$ bounded, where $s \in (0,1)$ and $C = C_U$ are constants independent of $\epsilon$. 

In view of the form of \eqref{eq:fisherInfoIdentity11} and \eqref{hypoforbilinear} we see that if $\epsilon^{-1}\lambda_1^\epsilon$ were to remain bounded as $\epsilon \to 0$, then $f^{\epsilon}$ would be bounded in $W^{s,1}$ uniformly in $\epsilon$. As $W^{s, 1}$ is locally compactly embedded in $L^1$ (Lemma \ref{lem:Ws1Precompact}), 
this observation leads naturally to the following alternative. 
\begin{proposition}\label{cor:dichotomyTn}
At least one of the following holds:
\begin{itemize}
    \item[(a)] $\lim_{\epsilon \to 0} \frac{\lambda_1^\epsilon}{\epsilon} = \infty$; or
    \item[(b)] the zero-noise flow $(x_t^0, v_t^0)$ admits a stationary density $f^0 \in L^1(\R^n \times \S^{n-1})$ (and moreover $f^0 \in W^{s,1}_{loc}$).
\end{itemize}
\end{proposition}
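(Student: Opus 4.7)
The plan is to argue by contradiction: assume (a) fails, and extract a subsequence $\epsilon_j \downarrow 0$ along which $\lambda_1^{\epsilon_j}/\epsilon_j \le M$ for some finite $M$, then build from the projective stationary densities $f^{\epsilon_j}$ a limit $f^0$ which will be shown to satisfy (b).

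The first step is to convert the bound on $\lambda_1^{\epsilon_j}/\epsilon_j$ into a quantitative uniform regularity estimate on the family $\{f^{\epsilon_j}\}$. By the Fisher information identity \eqref{eq:fisherInfoIdentity11}, boundedness of $\lambda_1^{\epsilon_j}/\epsilon_j$ implies $FI(f^{\epsilon_j}) \le nM - 2\tr A =: K$ uniformly in $j$. Plugging this into the hypoelliptic regularity estimate \eqref{hypoforbilinear} gives, for every bounded open $U \subset \R^n$, a constant $C_U$ independent of $j$ such that $\|f^{\epsilon_j}\|_{W^{s,1}(U \times \S^{n-1})} \le C_U(1 + \sqrt{K})$. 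A standard fractional Rellich--Kondrachov compact embedding $W^{s,1}_{\loc} \hookrightarrow L^1_{\loc}$, combined with a diagonal extraction over an exhaustion of $\R^n$ by bounded sets, produces a further subsequence (still denoted $\epsilon_j$) and an $f^0 \in L^1_{\loc}(\R^n \times \S^{n-1})$ with $f^{\epsilon_j} \to f^0$ in $L^1_{\loc}$, and moreover $f^0 \in W^{s,1}_{\loc}$.

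The second step is to globalize the convergence: one must rule out escape of mass at infinity so that $f^0 \in L^1(\R^n \times \S^{n-1})$ with $\|f^0\|_{L^1} = 1$. This is where the weak dissipation and energy-shell structure enter. The a priori moment estimates on $(x_t^\epsilon)$ collected in Appendix \ref{sec:Basics} give tightness of the stationary measures $\mu^\epsilon$ uniformly in $\epsilon \in (0,1]$ (indeed uniform moment bounds of the form $\int |x|^p \dee \mu^\epsilon \le C_p$), which lifts trivially to tightness of $\nu^\epsilon$ on $\R^n \times \S^{n-1}$ since $\S^{n-1}$ is compact. Combined with the $L^1_{\loc}$ convergence this upgrades to $f^{\epsilon_j} \to f^0$ in $L^1(\R^n \times \S^{n-1})$ and $\|f^0\|_{L^1} = 1$.

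The third and final step is to identify $f^0$ as a stationary density for the zero-noise projective flow. Each $f^{\epsilon_j}$ satisfies the stationary Kolmogorov equation $(\tilde{\mathcal{L}}^{\epsilon_j})^* f^{\epsilon_j} = 0$, i.e.\ in distributional form
\begin{equation*}
\int_{\R^n \times \S^{n-1}} \Bigl( \tilde{X}_0^{\epsilon_j} \varphi + \tfrac{\epsilon_j}{2} \sum_{k=1}^r \tilde{X}_k^2 \varphi \Bigr) f^{\epsilon_j} \, \dee q = 0
\end{equation*}
for every test function $\varphi \in C_c^\infty(\R^n \times \S^{n-1})$. Since $\tilde{X}_0^\epsilon = \tilde{X}_0^0 + \epsilon\,\tilde{Y}$ for a smooth vector field $\tilde{Y}$ (coming from the dissipative part $\epsilon A x$ and its lift), and since $f^{\epsilon_j} \to f^0$ in $L^1$ while $\varphi$ and all derivatives $\tilde{X}_0^0 \varphi, \tilde{Y}\varphi, \tilde{X}_k^2 \varphi$ are bounded and compactly supported, one may pass to the limit $j \to \infty$ to obtain $\int \tilde{X}_0^0 \varphi \, f^0 \, \dee q = 0$ for every test $\varphi$. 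This is precisely the statement that $f^0$ is invariant under the $\epsilon = 0$ projective flow, establishing (b).

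The main obstacle is the second step, where one must combine the local (in $\R^n$) regularity estimate from Theorem \ref{thm:hypoQuantRegIntro-General} with uniform-in-$\epsilon$ tightness to rule out loss of mass to infinity; the deterministic limit flow has no dissipation, so the existence of a genuine invariant probability density for it is a nontrivial consequence of the weakly dissipative scaling in \eqref{eq:SDEintroFD22} and the moment bounds of Appendix \ref{sec:Basics}. The rest of the argument is essentially compactness plus a routine passage to the limit in the Kolmogorov equation.
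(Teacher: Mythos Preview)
Your proof is correct and follows essentially the same approach as the paper: assume (a) fails, use the Fisher information identity \eqref{eq:fisherInfoIdentity11} to bound $FI(f^{\epsilon_j})$, apply the hypoelliptic regularity estimate \eqref{hypoforbilinear} for local $W^{s,1}$ compactness, invoke the uniform tightness \eqref{ineq:tightness} from Appendix \ref{sec:Basics} to globalize the $L^1$ convergence, and pass to the limit in the Kolmogorov equation. Your write-up is in fact somewhat more detailed than the paper's (which is terse); one cosmetic point is that this is not really an argument ``by contradiction'' but rather a direct proof that $\neg$(a) $\Rightarrow$ (b).
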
 
\begin{proof} 
Suppose that (a) fails, i.e. 
\[
\liminf_{\epsilon \to 0} \frac{\lambda^\epsilon_1}{\epsilon} < \infty. 
\]
In this case, \eqref{eq:fisherInfoIdentity11} implies that $\liminf_{\epsilon \to 0} FI(f^\epsilon) < \infty$ and the hypoelliptic regularity estimate \eqref{hypoforbilinear} implies $\liminf_{\epsilon \to 0} \norm{f^\epsilon}_{W^{s,1}(U)} < \infty$ for all bounded geodesic balls $U$. Combined with the uniform tightness of $\set{f^\epsilon}_{\epsilon > 0}$ in \eqref{ineq:tightness} and the compactness criterion Lemma \ref{lem:Ws1Precompact}, this yields precompactness in $L^1$ of $\set{f^\epsilon}_{\epsilon \in (0,1)}$ and so after extracting a subsequence $\set{\epsilon_j}$, we see that $\exists  f^0 \in L^1\cap W^{s,1}_{\loc}$ such that $f^{\epsilon_j} \to f$ in $L^1$.

Let us now briefly check that $f^0$ is an invariant density for the zero-noise flow $(x_t^0, v_t^0)$. 
For this, let $\tilde {\mathcal L}^\eps$ denote the infinitesimal generator for $w_t^\eps = (x_t^\eps, v_t^\eps)$ and let $\phi \in C^\infty_c(\S \R^n)$ be a compactly supported test function. Starting with the Kolmogorov equation and pairing with $\phi$ gives $\int (\tilde {\mathcal L}^\eps\phi) f^\epsilon \dq = 0$ 
for all $\epsilon > 0$, while taking $\eps = \epsilon_j \to 0$ yields
\[
\int (\tilde{\mathcal L}^0\phi) f^0 \dq = \int \big( \tilde X_0 \phi \big) f^0 \dq = 0
\]
using that $f^{\epsilon_j} \to f^\eps$ in $L^1$ while $\tilde {\mathcal L}^\eps \phi \to \tilde {\mathcal L}^0 \phi$ in $L^\infty$. This last equality holds for all smooth compactly supported $\phi$, and so we conclude 
$f^0$ is an invariant density for $w^0_t = (x^0_t,v^0_t)$.
\end{proof} 

In our setting, alternative (b) is ruled out by the following proposition, proved in the rest of Section \ref{sec:Inviscid-Limit}.

\begin{proposition}\label{prop:mainRigidityIntro}
 Assume that the bilinear mapping $B$ is not identically 0. Let $\nu$ be any invariant probability measure for $\hat \Phi^t$ with the property that $\nu(A \times \mathbb S^{n-1}) = \mu(A)$, where $\mu \ll \Leb_{\R^n}$. Then, $\nu$ is singular with respect to Lebesgue measure $\Leb_{\S \R^n}$ on $\S \R^n$. 
\end{proposition}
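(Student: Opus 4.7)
The plan is to argue by contradiction. Suppose $\nu$ admits an $L^1(\dee q)$ density $f^0$ on $\S\R^n$; I will derive a contradiction by combining the transport equation for $f^0$ with the two structural features of the drift---the volume preservation $\Div F \equiv 0$ and the energy conservation $x\cdot F \equiv 0$---and ultimately with the nontriviality $B\not\equiv 0$.

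\emph{Step 1 (transport equation).} Since $\Div F \equiv 0$, the flow $\Phi^t$ preserves $\Leb_{\R^n}$, and the Jacobian of the projective action $v\mapsto D\Phi^t(x) v/|D\Phi^t(x) v|$ on $\S^{n-1}$ equals $|D\Phi^t(x) v|^{-n}$. Invariance of $\nu = f^0\,\dee q$ under $\hat\Phi^t$ therefore gives
\[
    f^0\bigl(\hat\Phi^t(x,v)\bigr) = |D\Phi^t(x) v|^n\, f^0(x,v)\qquad \text{for $\nu$-a.e.\ $(x,v)$.}
\]

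\emph{Step 2 (coboundary via energy conservation).} Differentiating $x\cdot F(x)\equiv 0$ yields $(\nabla F(x))^\top x = -F(x)$, so along any orbit
\[
    \tfrac{d}{dt}\langle x_t, v_t\rangle = \langle F(x_t), v_t\rangle + \langle x_t, V_{\nabla F(x_t)}(v_t)\rangle = -\langle v_t, \nabla F(x_t) v_t\rangle\,\langle x_t, v_t\rangle,
\]
using $\langle x_t, \nabla F(x_t) v_t\rangle = -\langle F(x_t), v_t\rangle$ and $|v_t|=1$. Combined with the standard identity $\frac{d}{dt}\log|D\Phi^t(x) v| = \langle v_t, \nabla F(x_t) v_t\rangle$, this gives the multiplicative coboundary
\[
    |D\Phi^t(x) v| = \frac{|\langle x, v\rangle|}{|\langle x_t, v_t\rangle|}.
\]
Substituting into Step 1 shows that $\tilde f^0(x,v) := f^0(x,v)\,|\langle x, v\rangle|^n$ is a nonnegative $\hat\Phi^t$-invariant function (a first integral) on $\S\R^n$, vanishing identically on the invariant codimension-one set $Z=\{\langle x, v\rangle = 0\}$. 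The polynomial moment bounds (Appendix~\ref{sec:Basics}), which pass to $f^0$ in the limit, give $\tilde f^0 \in L^1(\dee q)$.

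\emph{Step 3 (contradiction; main obstacle).} It remains to use $B\not\equiv 0$ to conclude $\tilde f^0 \equiv 0$, since this forces $f^0 \equiv 0$ and contradicts $\nu(\S\R^n) = 1$. I plan to apply the ergodic decomposition $\nu = \int \nu_\alpha\,\dee m(\alpha)$: because $\tilde f^0$ is a first integral it is $\nu_\alpha$-a.s.\ equal to some constant $c(\alpha)\geq 0$, and it suffices to show $c(\alpha)=0$ for $m$-a.e.\ $\alpha$. On any ergodic component with $c(\alpha)>0$, the relation $f^0 = c(\alpha)|\langle x, v\rangle|^{-n}$ on $\supp(\nu_\alpha)$, together with $f^0 \in L^1$, forces $\supp(\nu_\alpha)$ to be essentially bounded away from $Z$; then via the coboundary of Step~2 the cocycle $|D\Phi^t(x) v|$ is uniformly bounded along $\nu_\alpha$-a.e.\ orbit. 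The remaining task---which I expect to be the principal technical challenge---is to rule out such uniformly bounded cocycles when $B\not\equiv 0$, exploiting the nontrivial projective dynamics at points where $\nabla F(x)\neq 0$ and the bracket-generating mechanism of Lemma~\ref{lem:ProBrakIntro} (for the deterministic $\epsilon = 0$ projective flow): the nontrivial bilinear structure should produce enough transverse rotation in $v_t$ to prevent the cocycle from being bounded along recurrent orbits of an absolutely continuous invariant measure, thereby completing the contradiction.
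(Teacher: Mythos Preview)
Your Steps 1 and 2 are correct, and the coboundary identity $|D\Phi^t(x)v|\cdot|\langle x_t,v_t\rangle| = |\langle x,v\rangle|$ is an elegant observation (equivalently, $\langle \Phi^t(x), D\Phi^t(x)v\rangle = \langle x,v\rangle$ for all $v$). But Step 3 has a genuine gap, and the paper's argument goes a different route.

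First, the specific claim ``$f^0\in L^1$ forces $\supp(\nu_\alpha)$ to be essentially bounded away from $Z$'' is not valid: an ergodic component $\nu_\alpha$ will typically be singular with respect to Lebesgue, so the pointwise values of $f^0$ along $\supp(\nu_\alpha)$ are unconstrained by $f^0\in L^1(\dee q)$. What you \emph{do} get from Poincar\'e recurrence (applied to $\nu_\alpha$ and the sets $\{|\langle x,v\rangle|\geq\delta\}$) is that along a subsequence $t_k\to\infty$ the quantity $|D\Phi^{t_k}(x)v|$ stays bounded for $\nu_\alpha$-a.e.\ $(x,v)$. But this only controls the cocycle in the \emph{single direction} $v$, not the operator norm $|D\Phi^{t}(x)|$, and the subsequence $t_k$ depends on $(x,v)$. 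Second, your plan to finish via ``the bracket-generating mechanism of Lemma~\ref{lem:ProBrakIntro}'' does not apply here: that lemma concerns Lie brackets with the \emph{noise} directions $X_1,\dots,X_r$, whereas at $\epsilon=0$ the projective flow is purely deterministic and there is no hypoelliptic spanning to invoke.

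The paper's proof supplies the two missing ingredients. (i) It invokes an abstract linear cocycle rigidity result (essentially due to Furstenberg and Arnold--Cong): if the fiber disintegrations $\nu_x$ are absolutely continuous on $\S^{n-1}$, then there is a measurably varying inner product $g_x$ on $\R^n$ for which $D\Phi^1(x):(\R^n,g_x)\to(\R^n,g_{\Phi^1 x})$ is an \emph{isometry}. This bounds the full operator norm $|D\Phi^n(x)|$ uniformly along any orbit that recurrently visits a set where $g_x$ is uniformly comparable to the Euclidean inner product. (ii) It identifies an explicit growth mechanism that your argument lacks: differentiating the scaling identity $\Phi^t(\alpha x)=\alpha\Phi^{\alpha t}(x)$ in $\alpha$ yields
\[
D\Phi^t(x)\,x \;=\; \Phi^t(x) + t\,B(\Phi^t(x),\Phi^t(x)),
\]
hence $|D\Phi^t(x)|\geq t\,|B(\Phi^t(x),\Phi^t(x))|/|x|$. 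Poincar\'e recurrence (for $\mu$) then produces return times $n_\ell\to\infty$ to a set where both $g_x$ is Euclidean-comparable \emph{and} $|B(x,x)|$ is bounded below, giving simultaneously $|D\Phi^{n_\ell}(x)|\lesssim 1$ and $|D\Phi^{n_\ell}(x)|\gtrsim n_\ell$, the desired contradiction. Your coboundary controls the wrong quantity to meet this shearing lower bound; to salvage your approach you would need either an argument upgrading the one-directional bound to an operator-norm bound, or a direct growth estimate for $|D\Phi^t(x)v|$ valid for Lebesgue-generic $v$, neither of which is sketched. A minor point: your appeal to Appendix~\ref{sec:Basics} for moments of $f^0$ is unjustified for a \emph{general} invariant $\nu$ as in the statement; the paper instead reduces to compactly supported $\nu$ by restricting to a ball $\{|x|\leq R\}$, which is licit since energy shells are invariant.
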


Finally, we give the proof of Corollary \ref{L96}. 
\subsubsection*{Completing the proof of Corollary \ref{L96}}
\begin{proof} 
To prove Corollary \ref{L96} we need only to verify the hypotheses of Theorem \ref{thm:critForEulerLikeIntro} for the L96 model \eqref{def:L96intro}.
The projective hypoellipticity condition, Assumption \ref{ass:PHC}, is verified in Section \ref{sec:L96projSpanning}. 
The cancellation Assumption \ref{ass:bilinear2} is immediate using the canonical coordinate basis vectors of $\R^J$. 
Hence, the Assumptions \ref{ass:WP} and \ref{ass:WP2} follow from Theorem \ref{thm:BasicProjDensity} in Appendix \ref{sec:Basics}. 
Therefore, all of the requirements to apply Theorem \ref{thm:critForEulerLikeIntro} are satisfied and the corollary follows. 
\end{proof}

It remains to check Proposition \ref{prop:mainRigidityIntro}, the proof of which occupies the remainder 
of this section. 

\subsection{Shearing between energy shells}\label{sec:prelimEuler5}

In this subsection we begin the proof of Proposition \ref{prop:mainRigidityIntro}. 
Here we show that the $\eps = 0$ limit of \eqref{eq:SDEintroFD22}, given by 
\begin{align}
\dot{x}_t = B(x_t, x_t),  \label{eq:Euler-like-ODE} 
\end{align}
must necessarily have some infinite-time growth in the gradient of the flow map. 
Some notation: let $\Phi^t$ be the flow map for the (deterministic) ODE \eqref{eq:Euler-like-ODE}, for $E > 0$ let us write $S_E := \{ x \in \R^n : |x|^2 = E\}$ for the ``energy shells'', preserved by the flow $\Phi^t$, i.e., $\Phi^t(S_E) = S_E$ for all $t \geq 0, E > 0$. 
Write $E(x) = |x|^2$.

\begin{lemma}\label{lem:preRig182} Let $x \in \R^n$ then the following identity holds
\begin{align}\label{eq:Dphi-id}
D_x\Phi^tx = \Phi^t(x) + t B(\Phi^t(x), \Phi^t(x)).
\end{align}
Moreover, for each $x \in \R^n$ and $t \geq 0$, we have that
\begin{align}\label{eq:normGrowth}
| D_x\Phi^t| \geq t \frac{|B(\Phi^t( x), \Phi^t( x))|}{|x|} \, .
\end{align}
\end{lemma}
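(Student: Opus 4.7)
The plan is to exploit the homogeneity of the bilinear vector field $B$, which induces a natural scaling symmetry on the flow $\Phi^t$. Specifically, for any $\lambda > 0$, the curve $t \mapsto \lambda \Phi^{\lambda t}(x)$ again solves \eqref{eq:Euler-like-ODE} with initial condition $\lambda x$, since
\[
\frac{\dee}{\dt}\bigl(\lambda \Phi^{\lambda t}(x)\bigr) = \lambda^2 B(\Phi^{\lambda t}(x), \Phi^{\lambda t}(x)) = B(\lambda \Phi^{\lambda t}(x), \lambda \Phi^{\lambda t}(x))
\]
by bilinearity of $B$. Uniqueness then yields the scaling identity $\Phi^t(\lambda x) = \lambda \Phi^{\lambda t}(x)$.

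The first identity \eqref{eq:Dphi-id} follows by differentiating this scaling symmetry with respect to $\lambda$ at $\lambda = 1$. On the left-hand side, $\tfrac{\dee}{\dee \lambda}\big|_{\lambda=1} \Phi^t(\lambda x) = D\Phi^t(x) x$; on the right-hand side, the product rule gives
\[
\tfrac{\dee}{\dee \lambda}\Big|_{\lambda=1} \lambda \Phi^{\lambda t}(x) = \Phi^t(x) + t\,\dot{\Phi}^t(x) = \Phi^t(x) + t\,B(\Phi^t(x), \Phi^t(x)).
\]

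For the norm estimate \eqref{eq:normGrowth}, I would first use the trivial bound $|D\Phi^t(x)| \geq |D\Phi^t(x) x|/|x|$ together with \eqref{eq:Dphi-id} to reduce to showing that
\[
\bigl|\Phi^t(x) + t\,B(\Phi^t(x), \Phi^t(x))\bigr| \;\geq\; t\,|B(\Phi^t(x), \Phi^t(x))|.
\]
This is where the energy-preservation hypothesis $x \cdot B(x,x) \equiv 0$ enters: applied at the point $\Phi^t(x)$, it says that $\Phi^t(x)$ is orthogonal to $B(\Phi^t(x), \Phi^t(x))$, so the Pythagorean theorem gives
\[
\bigl|\Phi^t(x) + t\,B(\Phi^t(x), \Phi^t(x))\bigr|^2 = |\Phi^t(x)|^2 + t^2\,|B(\Phi^t(x), \Phi^t(x))|^2 \;\geq\; t^2\,|B(\Phi^t(x), \Phi^t(x))|^2,
\]
and the claim follows. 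I do not anticipate any serious obstacle here; the main conceptual point is simply recognizing that the scaling symmetry converts the Jacobian action on the radial direction into an explicit shear, and that the orthogonality $\Phi^t(x) \perp B(\Phi^t(x), \Phi^t(x))$ keeps the shear from being absorbed by the radial part.
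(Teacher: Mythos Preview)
Your proof is correct and follows essentially the same approach as the paper: both derive the scaling identity $\Phi^t(\lambda x) = \lambda \Phi^{\lambda t}(x)$ from bilinearity and uniqueness, differentiate in $\lambda$ at $\lambda = 1$ to obtain \eqref{eq:Dphi-id}, and then use the orthogonality $\Phi^t(x) \perp B(\Phi^t(x),\Phi^t(x))$ for the norm bound. Your write-up is in fact slightly more detailed than the paper's on the Pythagorean step.
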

\begin{proof}[Proof of Lemma]
  For a given $\alpha >0$, note that the rescaled flow $\alpha\Phi^{\alpha t}(x)$ also solves \eqref{eq:Euler-like-ODE} with initial data $\alpha x$. Therefore by uniqueness, we have
\begin{equation}\label{eq:phi_t-rescale}
	\Phi^t(\alpha x) = \alpha \Phi^{\alpha t}(x)
\end{equation}
Taking the derivative with respect to $\alpha$ on both sides of \eqref{eq:phi_t-rescale} yields
\[
D_{\alpha x}\Phi^t x = \Phi^{\alpha t}(x) + \alpha t B(\Phi^{\alpha t}(x),\Phi^{\alpha t}(x)).
\]
Setting $\alpha =1$ gives \eqref{eq:Dphi-id}. Inequality \eqref{eq:normGrowth} follows from part \eqref{eq:Dphi-id} and the fact that 
$\Phi^t(x)\cdot B(\Phi^t(x),\Phi^t(x)) = 0$ for all $x$, by assumption. 
\end{proof}

\subsection{Rigidity for invariant projective densities: review of general theory}\label{sec:sec51}

We recall here an abstract result (Theorem \ref{thm:rigidSec5}) indicating that the presence of an
 invariant projective density as in Proposition \ref{cor:dichotomyTn} implies rigid properties of the corresponding flow. 
 
 In this section, we will state everything in the following \emph{abstract linear cocycle} setting. 
Throughout, $T : (X, \mathcal B, m) \circlearrowleft$ is a (discrete-time) continuous
 transformation of a compact metric space $X$, with $\mathcal B$ the Borel $\sigma$-algebra. 
Let $A : (X, \mathcal B) \to SL_n(\R), x \mapsto A_x$ be a measurable mapping\footnote{Here, 
$SL_n(\R)$ is the group of $d \times d$ real matrices of determinant 1.}.
This generates the \emph{cocyle} of linear operators
 $\mathcal A: X \times \Z_{\geq 0} \to SL_n(\R)$ defined by
\[
\mathcal A(n, x) = \mathcal A^n_x := A_{T^{n-1} x} A_{T^{n-2} x} \cdots A_{T x} A_x \, .
\]
Note that $\mathcal A$ satisfies the \emph{cocycle identity}\footnote{When $T$ is a smooth mapping of a manifold and $\mathcal A_x^n := D_x (T^n)$ is the so-called \emph{derivative cocycle}, the cocycle identity 
is merely the chain rule for $T^n$.} $\mathcal A^{m + n}_x = 
\mathcal A^m_{T^n x} \mathcal A^n_x$ for all $m, n \geq 0, x \in X$. Associated to $T, \mathcal A$ is the \emph{projective action} 
$\hat T : X \times \mathbb S^{n-1} \circlearrowleft$ defined by
\[
(x, v) \mapsto \left(T x, \frac{A_x v}{| A_x v|}\right) \, , \quad x \in X, v \in \mathbb S^{n-1} \, ,
\]
which we regard as a dynamical system on $ X \times \mathbb S^{n-1}$ in its own right. 

Let $\hat m$ be any $\hat T$-invariant measure on $X \times \S^{n-1}$ projecting to $m$
(i.e., $\hat m(K \times \mathbb S^{n-1}) = m(K)$ for all measurable $K \subset X$), and
consider its disintegration
\[
\dee \hat m(u,  v) = \dee \hat m_x( v) \dee m(x) \, .
\]
In this context, it is well-known \cite{rokhlin1949fundamental, chang1997conditioning} that disintegrations $(\hat m_x)_{x \in X}$ exist
and are essentially unique (up to $m$-measure zero modifications) and $x\mapsto \hat{m}_x$ is weak-* measurably varying. Note that invariance
of $\hat m$ implies that
\[
	(A_x)_* \hat m_x = \hat m_{T x}\quad \text{for}\quad m\text{-a.e. } x \in X,
\]
where for a $d \times d$ matrix $A$ we write $A_*$ for the action of $A$ on probability measures
on $\mathbb S^{n-1}$. 

The following result (more-or-less Theorem 3.23 of \cite{arnold1999jordan}, up to a technical issue-see below) involves the rigidity of 
absolute continuity of the disintegration measures $\hat m_x$ with respect to $\Leb_{\mathbb S^{n-1}}$. 

\begin{theorem}\label{thm:rigidSec5}
Assume that $\hat m_x \ll \Leb_{\mathbb S^{n-1}}$ for $m$-almost every $x \in X$. Then, 
there exists a measurable family of inner products $X \ni x \mapsto g_x(\cdot, \cdot)$ on 
$\R^n$ and a $T$-invariant set $\Gamma \subset X$ of full $m$-measure such that for all 
$x \in \Gamma$ and $v, w \in \R^n$, we have that
\[
g_{T x}( A_x v, A_x w) = g_x(v, w) \, .
\]
That is, $A_x : (\R^n, g_x) \to (\R^n, g_{T x})$ is an isometry. 
\end{theorem}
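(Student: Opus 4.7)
My plan is to combine the Oseledec Multiplicative Ergodic Theorem (MET) with the special pushforward structure $(A_x)_\ast \hat m_x = \hat m_{Tx}$ to extract the desired field of inner products. After passing to an ergodic component of $(X, T, m)$ and using that the disintegrations of $\hat m$ behave compatibly under ergodic decomposition, the MET applied to the cocycle $\mathcal A$ produces, for $m$-a.e.\ $x$, Lyapunov exponents $\lambda_1 \geq \cdots \geq \lambda_d$ summing to zero (as $A_x \in SL_d(\R)$) together with a measurable Oseledec splitting $\R^d = \bigoplus_{j=1}^s E_j(x)$.

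The first key observation is that absolute continuity of $\hat m_x$ forces all Lyapunov exponents to coincide, and hence $\lambda_1 = \cdots = \lambda_d = 0$. Indeed, a classical argument going back to Furstenberg (see, e.g., \cite{arnold1995random, baxendale1989lyapunov}) shows that any ergodic invariant projective probability measure must be concentrated on a union of projectivizations of Oseledec subspaces; if the exponents were not all equal, these would be proper subspaces of $\R^d$, which have zero $\Leb_{\S^{d-1}}$-measure, contradicting $\hat m_x \ll \Leb_{\S^{d-1}}$.

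With the single-exponent-equal-zero regime established, I would extract the inner product from the density $f_x := d \hat m_x / d\Leb_{\S^{d-1}}$. The pushforward identity, combined with the Jacobian $|A_x u|^{-d}$ of the projective action $u \mapsto A_x u /|A_x u|$ on $\S^{d-1}$, yields the transformation rule
\[
f_x(u) \;=\; |A_x u|^{-d} \, f_{Tx}\!\left( A_x u / |A_x u| \right), \qquad u \in \S^{d-1},
\]
and, upon extending $f_x$ to a $(-d)$-homogeneous positive function $F_x(v) := f_x(v/|v|)/|v|^d$ on $\R^d \setminus \{0\}$, takes the clean cocycle form $F_{Tx}(A_x v) = F_x(v)$. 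This cocycle-invariant density should now single out, in an $SL_d(\R)$-equivariant manner, a canonical ellipsoid $\mathcal E_x \subset \R^d$ (for example as the minimizer of an appropriate convex functional of $F_x$ over the symmetric space $SL_d(\R)/SO(d)$ of inner products, after normalizing to unit covolume, as in the construction of \cite{arnold1999jordan}); taking $g_x$ to be the inner product whose unit ball is $\mathcal E_x$ then yields the isometry property $g_{Tx}(A_x v, A_x w) = g_x(v, w)$. The main obstacle is executing this final $SL_d$-equivariant and measurable selection: zero Lyapunov exponents alone do not reduce a general $SL_d(\R)$ cocycle to an $SO(d)$-valued one, since subexponentially growing norms can preclude boundedness in $SL_d(\R)/SO(d)$. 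One therefore has to use the invariant density $F_x$ in an essential way—both to pin down a specific ellipsoid in each orbit and to verify, via a measurable selection theorem, that the assignment $x \mapsto \mathcal E_x$ is jointly measurable and preserved by $A_x$.
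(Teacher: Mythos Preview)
Your first step—using the MET to conclude that all Lyapunov exponents vanish—is correct but, as you yourself recognize, does not by itself produce the inner product. The gap you identify is real: zero exponents still allow subexponential drift in $SL_d(\R)/SO(d)$, so there is no automatic reduction to an orthogonal cocycle, and your proposed ``canonical ellipsoid via a convex functional of $F_x$'' is never specified. Even granting that such a functional exists with a unique minimizer, you would still need $SL_d$-equivariance and measurable dependence on $x$; moreover, since you only assume $\hat m_x \ll \Leb_{\S^{d-1}}$ (not $\sim$), the density $f_x$ may vanish on a set of positive measure, which obstructs naive moment-based constructions.

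The paper bypasses both the MET and any density-functional idea. After reducing to ergodic $m$, it invokes a structural lemma (Furstenberg--Zimmer): the quotient $\mathcal P(\S^{d-1})/SL_d(\R)$ has countably generated Borel $\sigma$-algebra, so ergodicity forces the class $[\hat m_x]$ to be $m$-a.s.\ constant. This yields a measurable $G : X \to SL_d(\R)$ with $G(x)_* \hat m_{x_0} = \hat m_x$ for a fixed reference point $x_0$. Then $G(Tx)^{-1} A_x G(x)$ lies in the stabilizer $H_{x_0} = \{B \in SL_d(\R) : B_* \hat m_{x_0} = \hat m_{x_0}\}$, and a short Furstenberg-type argument shows $H_{x_0}$ is compact precisely because $\hat m_{x_0} \ll \Leb$ (a noncompact sequence would push mass onto a proper subspace). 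A compact subgroup of $SL_d(\R)$ preserves some inner product $\langle\cdot,\cdot\rangle$, and $g_x(v,w) := \langle G(x)^{-1} v, G(x)^{-1} w\rangle$ is the desired field. This route uses absolute continuity only at the single point $x_0$, requires no positivity or regularity of $f_x$, and makes the measurable selection explicit rather than leaving it as an obstacle.
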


This is slightly different from the form in Theorem 3.23 of \cite{arnold1999jordan}: 
there, it is supposed that $\hat m_x \sim \Leb_{\mathbb S^{n-1}}$, whereas for our purposes we need
the version with ``$\ll$''. For this reason, as well as for the sake of completeness, we sketch the
proof of Theorem \ref{thm:rigidSec5} here. 

\begin{proof}[Proof sketch]
To start, let us assume for now that 
$T : (X, \mathcal B, m) \circlearrowleft$ is ergodic (note that we do not assume $\hat m$ is ergodic). 
We require the following Lemma: 
\begin{lemma}[Corollary 3.7 in \cite{arnold1999jordan}; Lemma 6.2 in \cite{furstenberg1981rigidity}]\label{lem:measSelectionSec5}
Assume $(X, \mathcal B, m, T)$ is ergodic. Then, there is a full $m$-measure set of $x_0 \in X$ with the following property: there exists a measurable
mapping $G : X \to SL_n(\R)$, depending on the choice of $x_0$, such that 
\[
G(x)_* \hat m_{x_0} = \hat m_x \, \quad \text{ for } m-\text{almost every } x \in X \, .
\]
\end{lemma}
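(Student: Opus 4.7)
The plan is to use the ergodicity of $T$ to show that the fiber measures $\hat m_x$ all lie in a single $\SL_d(\R)$-orbit, and then invoke a measurable selection theorem to measurably pick out the group element realizing each one. Consider the natural action of $\SL_d(\R)$ on the space $\mathcal P(\S^{d-1})$ of Borel probability measures by pushforward, $g \cdot \mu := g_\ast \mu$. The cocycle relation $(A_x)_\ast \hat m_x = \hat m_{Tx}$ immediately implies that $\hat m_x$ and $\hat m_{Tx}$ always belong to the same $\SL_d(\R)$-orbit, so the function $x \mapsto \mathcal O(\hat m_x)$ into the orbit space of this action is $T$-invariant. Ergodicity of $(X, \mathcal B, m, T)$ then forces this function to be almost surely equal to a single orbit $\mathcal O_\ast$ on a full-measure, $T$-invariant subset $X_0 \subset X$.

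Next, pick any reference point $x_0 \in X_0$, so that $\hat m_x$ lies in the $\SL_d(\R)$-orbit of $\hat m_{x_0}$ for all $x \in X_0$. Consider the orbit map $\pi : \SL_d(\R) \to \mathcal P(\S^{d-1})$ defined by $\pi(g) := g_\ast \hat m_{x_0}$, which is continuous in the weak-$\ast$ topology and has image exactly $\mathcal O_\ast$. I would then construct a measurable right inverse $\Psi : \mathcal O_\ast \to \SL_d(\R)$ of $\pi$, and set $G(x) := \Psi(\hat m_x)$; the weak-$\ast$ measurability of $x \mapsto \hat m_x$ supplied by the disintegration theorem would then yield the required measurability of $G$, with the identity $G(x)_\ast \hat m_{x_0} = \hat m_x$ holding by construction.

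The main obstacle is the measurable selection of $\Psi$, which I would handle via classical descriptive set theory. Since $\pi$ is continuous between Polish spaces, its image $\mathcal O_\ast$ is an analytic subset of $\mathcal P(\S^{d-1})$, and the Jankov--von Neumann uniformization theorem produces a universally measurable right inverse $\Psi$, which is Borel after completing with respect to $(\hat m_x)_\ast m$. Alternatively, one may observe that the isotropy group $H := \pi^{-1}(\hat m_{x_0})$ is a closed subgroup of the Lie group $\SL_d(\R)$, so the quotient $\SL_d(\R)/H$ is a smooth manifold on which local smooth cross-sections exist, and these can be assembled into a global Borel cross-section using second-countability. Either route yields a measurable $G : X_0 \to \SL_d(\R)$ on a full $m$-measure set, completing the proof.
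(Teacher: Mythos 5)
Your overall strategy is the same as the paper's: show via the cocycle relation and ergodicity that all fiber measures $\hat m_x$ lie in a single $\SL_d(\R)$-orbit in $\mathcal P(\S^{d-1})$, then measurably select a group element carrying $\hat m_{x_0}$ to $\hat m_x$. Your treatment of the selection step is in fact more careful than the paper's (which simply calls it ``straightforward''), and both of your proposed routes -- Jankov--von Neumann uniformization of the continuous orbit map, or a Borel cross-section of $\SL_d(\R)\to \SL_d(\R)/H$ for the closed isotropy group $H$ -- are legitimate.

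The one genuine gap is the assertion that ``ergodicity then forces this function to be almost surely equal to a single orbit.'' The map $x\mapsto [\hat m_x]$ takes values in the orbit space $\mathcal P(\S^{d-1})/\SL_d(\R)$, and the standard principle that a $T$-invariant measurable function is a.e.\ constant under ergodicity requires the target Borel structure to be countably separated (equivalently, countably generated and separating points). For a general Borel action of a Polish group this fails -- the orbit space can carry a trivial Borel structure (think of the orbit space of an irrational rotation), in which case an invariant orbit-valued map need not be essentially constant. The point is saved here because the action of the algebraic group $\SL_d(\R)$ on $\mathcal P(\S^{d-1})$ is \emph{smooth} in Zimmer's sense (orbits are locally closed), so the quotient Borel $\sigma$-algebra is countably generated; this is exactly what the paper invokes via Corollary 3.2.12 of \cite{zimmer2013ergodic}. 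Your proof needs this input (or an equivalent local closedness statement for the orbits, which incidentally is also what legitimizes identifying the orbit $\mathcal O_*$ with $\SL_d(\R)/H$ as a standard Borel space in your second selection route); without it the constancy step is unjustified.
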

This version is slightly different from those appearing in \cite{arnold1999jordan, furstenberg1981rigidity}, so we briefly recall the proof below. 
\begin{proof}[Proof sketch of Lemma]
Let $\mathcal P(\mathbb S^{n-1})$ denote the space of probability measures on $\mathbb S^{n-1}$ with the 
weak$^*$ topology. Consider the quotient $\mathcal P(\mathbb S^{n-1}) / SL_n(\R)$, i.e.,
for $\xi, \eta \in \mathcal P(\mathbb S^{n-1})$ we set $\xi \sim \eta$ iff $\exists B \in SL_n(\R)$ so that
$B_* \xi = \eta$. 
Writing $[\eta]$ for the equivalence class of $\eta \in \mathcal P(\mathbb S^{n-1})$, note that 
$[\hat m_x] = [\hat m_{T^k x}]$ for all $k$, i.e., $x \mapsto [\hat m_x]$ is constant
along orbits. 
By Corollary 3.2.12 in \cite{zimmer2013ergodic}, the Borel $\sigma$-algebra on the quotient
space $\mathcal P(\mathbb S^{n-1}) / SL_n(\R)$ is countably generated. Using this 
along with the fact that $T : (X, m) \circlearrowleft$ is ergodic, it follows from Proposition 2.1.11 in \cite{zimmer2013ergodic} that $[\hat m_x]$ is constant $m$-almost surely. In particular, 
for $m$-a.e. $x_0, x \in X$, the measures 
$\hat m_x$ and $\hat m_{x_0}$ are related by the application of a matrix in $SL_n(\R)$. 
It is now straightforward to construct a measurable selection $G: X \to SL_n(\R)$ as above. 
\end{proof}

Fix $x_0$ so that $\hat m_{x_0} \ll \Leb_{\mathbb S^{n-1}}$ and let $G$ be as in 
Lemma \ref{lem:measSelectionSec5}. Observe that for any $n \geq 0$ and $m$-a.e. $x \in X$
we have that $G(T^n x)^{-1} \mathcal A^n_x G(x) \in H_{x_0}$, where
\[
H_{x_0} := \{ B \in SL_n(\R) : B_* \hat m_{x_0} = \hat m_{x_0}\} \, .
\]
Observe that $H_{x_0}$ is a subgroup of $SL_n(\R)$, which we claim to be compact. If not, then a lemma of Furstenberg (see, e.g., Claim 4.8 in \cite{bedrossian2018lagrangian}) would 
imply the existence of proper subspaces $V^1, V^2 \subset \R^n$ and a sequence $\{ B_n\} \subset H_{x_0}$ so that $dist(B_n v, V^2) \to 0$ for all $v \notin V^1$, which would contradict
$\hat m_{x_0} \ll \Leb_{\mathbb S^{n-1}}$. 

Since $H_{x_0}$ is compact, there exists an inner product $\langle \cdot, \cdot \rangle$
on $\R^{d}$ with respect to which all members of $H_{x_0}$ are isometries (Lemma 4.6 in \cite{bedrossian2018lagrangian}). The proof is complete on defining $g_x$ through
\begin{align}\label{eq:defineMeasRM}
g_x(v, w) = \langle G(x)^{-1} v, G(x)^{-1} w \rangle \, . 
\end{align}

To handle the case when $m$ is not ergodic, we use the ergodic decomposition \cite{walters2000introduction} 
\[
m = \int_{\mathcal E_T(X)} \xi \, \dee \tau_m(\xi) \, , 
\]
where $\mathcal E_T(X)$ is the space of $T$-ergodic measures on $X$ and $\tau_m$ 
a Borel probability measure (w.r.t. the weak$^*$ topology) on $\mathcal E_T(X)$. For each component
$\xi$, we define $\hat \xi$ through the formula
\[
\dee \hat \xi(x, v) = \dee \hat m_x(v) \dee \xi(x) \, ,
\]
noting that $\hat m_x \ll \Leb_{\S^{n-1}}$ for $\xi$-a.e. $x \in X$ and $\tau_m$-a.e. $\xi \in \mathcal E_T(X)$. The proof now goes through the same as before, the only difference being that 
the measurable inner product \eqref{eq:defineMeasRM} is defined along each
$\xi \in \mathcal E_T(X)$ one at a time. 
\end{proof}

\subsection{Proof of Proposition \ref{prop:mainRigidityIntro}}\label{sec:53}

To start, let $\nu$ be $\hat \Phi^t$-invariant, projecting to an absolutely continuous measure $\mu$
on $\R^n$, and assume that
\[
\nu = \nu^{ac}+ \nu^\perp
\]
where $\nu^{ac} \ll \Leb_{\S \R^n}$ is not identically zero (our contradiction hypothesis), 
while $\nu^\perp$ is singular. Since $\hat \Phi^t$ sends
absolutely continuous measures to absolutely continuous measures and singular to singular, 
it follows that $\nu^{ac}$ is $\hat \Phi^t$-invariant. Since 
$\nu^{ac} \leq \nu$, the measure $\mu^{ac} (K) := \nu^{ac}(K \times \mathbb S^{n-1})$ satisfies 
$\mu^{ac} \ll \mu \ll \Leb_{\R^n}$ and is likewise $\Phi^t$-invariant. 
On replacing $\nu$ with the normalization of $\nu^{ac}$, going forward we may assume
without loss that $\nu \ll \Leb_{\S \R^n}$. Finally, since the energy shells $S_E = \{ |x|^2 = E\}$
are invariant, we may replace $\nu$ with the normalization of its restriction to 
$B(0,R) \times \S^{n-1}$ for some large, fixed $R > 0$. 

Continuing, let $\dee \nu(x, v) = \dee \nu_x(v) \dee \mu(x)$ denote the disintegration measures of $\nu$
and note that $\nu_x \ll \Leb_{\S^{n-1}}$ for $\mu$-a.e. $x$. 
By Theorem \ref{thm:rigidSec5}, there exists a 
measurable family of inner products $g_x, x \in \R^n$ so that
\begin{align}\label{eq:isometrySec5Proof}
D_x\Phi^1 : (\R^n, g_x) \to (\R^n, g_{\Phi^1 x})
\end{align}
is an isometry for $\mu$-a.e. $x$. By a standard procedure, 
we may assume
that \eqref{eq:isometrySec5Proof} holds for $x \in \Gamma$, where $\Gamma \subset \R^n$
satisfies $\mu(\Gamma) = 1$ and $\Phi^1(\Gamma) = \Gamma$. 

For $L > 0$, define
\[
\Gamma_L = \left\{x \in \Gamma : L^{-1} \leq \frac{\sqrt{g_x(v, v)}}{|v|} \leq L \quad \text{ for all } v \in \S^{n-1} \right\} \cap \left\{ x \in \Gamma : |B(x,x)| \geq L^{-1} \right\} \, .
\]
and note that if $x, \Phi^n x \in \Gamma_L$ for some $n \geq 0$, then $|D_x \Phi^n| \leq L^2$ must hold by \eqref{eq:isometrySec5Proof}. Moreover, we have that
 $\mu(\Gamma_L) \nearrow \mu(\Gamma) = 1$ as $L \to \infty$. Observe that this 
 relies on the assumption that $B$ is not identically 0, hence $|B(x,x)| > 0$ Lebesgue-a.e. 
(here, we use the standard fact that $\{B(x,x) = 0\}$ is a proper variety in $\R^n$, hence
must have zero volume). 

Fix $L$ such that $\mu(\Gamma_L) \geq 1/2 > 0$. By the Poincar\'e Recurrence Theorem, $\mu$-a.e.  
$x \in \Gamma_L$ visits $\Gamma_L$ infinitely many times.  Fix such an $x \in \Gamma_L \setminus \{ 0 \}$ and let $0 := n_0 < n_1 < n_2 < \cdots, \lim_{\ell \to \infty} n_\ell = \infty$, so that $\Phi^{n_\ell} (x) \in \Gamma_L$ for all $n_\ell$, hence
\[
|D_x\Phi^{n_\ell}|\leq L^2
\]
for all such $n_\ell$. On the other hand, \eqref{eq:normGrowth} implies 
\[
|D_x\Phi^{n_\ell}| \geq \frac{n_\ell}{L |x|}
\]
as $n_\ell \to \infty$, a contradiction. This completes the proof of Proposition \ref{prop:mainRigidityIntro}.

\appendix
\section{Basic geometric preliminaries and compactness criterion}

In this appendix we summarize some basic lemmas surrounding Sobolev and Besov-type spaces on Riemannian manifolds, especially $\S \R^n := \R^n \times \S^{n-1}$.  
None of the results here are new, but we could not locate proofs in the literature exactly matching the statements we use in the proof and so we have provided sketches for the readers' convenience (though see \cite{Triebel} Chapter 7 for very similar results). 

\subsection{Fractional Sobolev norms and Gagliardo-Nirenberg-Sobolev inequalities}

First, we recall the definition of fractional Sobolev norms on $(\cM,g)$ a smooth, connected, geodesically complete, $d$-dimensional Riemannian manifold with bounded geometry and positive injectivity radius $\delta_0>0$ (see \cite{Triebel} Chapter 7 for more discussion on the meaning and significance of these assumptions for function spaces). For any $s\in(0,1)$ and $p \in [1,\infty)$, define the $W^{s,p}$ fractional Sobolev norm by
\begin{equation}\label{def:Wsp}
  \norm{w}_{W^{s,p}} = \norm{w}_{L^p} +  \left(\int_{\mathcal{M}} \int_{h \in T_x \mathcal{M} : \abs{h} < \delta_0} \frac{\abs{ w(\mathrm{exp}_x h) - w(x)}^p}{\abs{h}^{sp+d}}\dee h\dee q(x) \right)^{1/p}
\end{equation}
where $\mathrm{exp}_x:T_x\cM \to \cM$ denotes the standard exponential map on $\mathcal{M}$.
Note that if the manifold is geodesically complete but does not have bounded geometry and a globally positive injectivity radius, this definition can still be used locally (e.g. on closed, bounded, geodesic balls). 
In this work we will only need the spaces $W^{s,1}$ and $H^s := W^{s,2}$. 

By the aforementioned geometrical assumptions, if $0< \delta <\delta_0$, then there exists a locally finite covering $\{B_\delta(z_j)\}$ of geodesic balls and an associated smooth partition of unity $\{\chi_j\}$, where $\mathrm{supp} \chi_j \subset B_\delta(z_j)$ (see Proposition in 7.2.1 \cite{Triebel} ). We have that the coordinate maps $\xbf_j = \exp_{z_j}: B_{\delta}(0;\R^m)\to B_{\delta}(z_j;\cM)$ satisfy for each $k\in \Z_+$
\[
  \sup_j\|\nabla^k\xbf_j\|_{L^\infty} + \sup_j\|\nabla^k\xbf_j^{-1}\|_{L^\infty} <\infty
\]
and that in $\xbf_j$ coordinates there exists a $c>1$ such that for each $x\in B_{\delta}(0;\R^m)$, $J_j(x)= \sqrt{|\det g|}$ satisfies $c^{-1}\leq J_j(x)\leq c$.

To begin, we record two Gagliardo-Nirenberg-Sobolev-type inequalities on $\mathbb S \mathbb R^d$ which are used in Appendix \ref{sec:Basics} to give qualitative estimates on stationary measures. 
\begin{lemma} \label{lem:GNSSR}
For all $n \geq 1$, all $s \in (0,1)$, there exists a $\theta \in (0,1)$ such that the following holds $\forall f \in C^\infty_c(\S \R^n)$
\begin{align}
\norm{f}_{L^2} \lesssim_{n,s} \norm{f}_{L^1}^{1-\theta} \norm{f}_{H^s}^{\theta}. \label{ineq:L1HsSR} 
\end{align}
For the homogeneous case, there exist $\theta_1,\theta_2 \in (0,1)$ such that the following holds for all $f \in C^\infty_c(\S \R^n)$
\begin{align}
\norm{f}_{L^2} \lesssim_{n} \norm{f}_{L^1}^{1-\theta_1} \|(-\Delta_{x,v})^{1/2}f\|_{L^2}^{\theta_1} + \norm{f}_{L^1}^{1-\theta_2} \|(-\Delta_{x,v})^{1/2}f\|_{L^2}^{\theta_2}, \label{ineq:L1H1SR} 
\end{align}
where $\Delta_{x,v}$ denotes the (negative definite) Laplace-Beltrami operator on $\S \R^n$. 
\end{lemma}
\begin{proof}
Let $m = \mathrm{dim}\, \S \R^n = 2n-1$.
The inhomogeneous inequality \eqref{ineq:L1HsSR} follows easily from the corresponding estimate on $\R^m$ as the manifold $\S \R^n$ has uniformly bounded geometry and non-negative Ricci curvature (see e.g. [pg 301 inequality (15), \cite{Triebel}]).
Let us briefly sketch the argument for the readers' convenience.
  
Define $\tilde{f}_j := (\chi_j f) \circ \mathbf{x}_j^{-1}$.
Then, from the fact that \eqref{ineq:L1HsSR} holds on $\R^m$ and that, by Theorem 7.5.1 \cite{Triebel}, $\left(\sum_{j=1}^\infty\|\tilde{f}_j\|_{H^s(\R^m)}^2\right)^{1/2}$
is an equivalent norm for $H^s(\cM)$, we have
\begin{align*}
\norm{f}_{L^2(\cM)}^2  =\sum_{j=1}^\infty \norm{\tilde{f}_j}_{L^2(\R^m)}^2 &\lesssim \sum_{j=1}^\infty \norm{\tilde{f}_j}_{L^1(\R^m)}^{2(1-\theta)} \norm{\tilde{f}_j}_{H^s(\R^m)}^{2\theta} \\
& \leq \left(\sum_{j=1}^\infty \norm{\tilde{f}_j}_{L^1(\R^m)}^2 \right)^{1-\theta} \left(\sum_{j=1}^\infty \norm{\tilde{f}_j}_{H^s(\R^m)}^2 \right)^{\theta} \\
& \lesssim \norm{f}_{L^1(\cM)}^{1-\theta}\left(\sum_{j=1}^\infty \norm{\tilde{f}_j}_{L^1(\R^m)} \right)^{1-\theta} \norm{f}_{H^s(\cM)}^{2\theta} \\
&= \norm{f}_{L^1(\cM)}^{2(1-\theta)} \norm{f}_{H^s(\cM)}^{2\theta},  
\end{align*}
which is the desired result.

The homogeneous $\|(-\Delta_{x,v})^{-1}f\|$ norms in \eqref{ineq:L1H1SR} require a more intrinsic treatment.
Thus, we use heat semigroup methods, commonly used to treat such inequalities on manifolds; see e.g. \cite{Ledoux2003}.   
Denote $e^{t\Delta}$ the heat semigroup associated to the Laplace-Beltrami operator (omitting the $x,v$ subscript for simplicity).
By the Li-Yau inequality \cite{LiYau} and the fact that $\S \R^n$ has non-negative Ricci curvature, the following pointwise upper bound holds for $e^{t\Delta}f$
\begin{align}
\abs{e^{t\Delta }f(z)} \lesssim \int_{\S \R^d} \frac{1}{\mathrm{vol} B(z',\sqrt{t}) } e^{-cd(z,z')^2/t} \abs{f(z')} \dee q(z^\prime), \label{ineq:LiYau} 
\end{align}
where $B(z',\sqrt{t})$ is the geodesic ball of radius $\sqrt{t}$ centered at $z$ and $d(z,z')$ denotes geodesic distance and $0<c<1/4$ is some constant.   
Note that for $t< 1$ we have $\mathrm{vol} B(z',\sqrt{t}) \approx t^{-m/2}$ whereas for $t > 1$ we have $\mathrm{vol} B(z',\sqrt{t}) \approx t^{-n/2}$.  
From this, we obtain the following $L^2$ regularization estimate
\begin{align}
\norm{e^{t\Delta} f}_{L^2} & \lesssim \frac{1}{(t^{-n/2} + t^{-m/2})^{1/2}}\norm{f}_{L^1} \approx  \frac{1}{(t^{-n/4} + t^{-m/4})}\norm{f}_{L^1} \label{ineq:L1L2SR}. 
\end{align}
Next, observe that
\begin{align*}
\norm{e^{t\Delta} f - f}_{L^2} & = \norm{\int_0^t \Delta e^{\tau \Delta} f \dee \tau }_{L^2}  \leq \int_0^t \norm{ \Delta e^{\tau \Delta} f  }_{L^2} \dee \tau \lesssim t^{1/2} \norm{ (-\Delta)^{1/2} f }_{L^2}. 
\end{align*}

Let $g = f/\norm{f}_{L^1}$. 
Using the heat semigroup to mollify $g$, we obtain from \eqref{ineq:L1L2SR}
\begin{align*}
\norm{g}_{L^2} & \leq \norm{e^{t\Delta} g - g}_{L^2} + \norm{e^{t\Delta} g}_{L^2} \lesssim t^{1/2} \norm{(-\Delta)^{1/2} g}_{L^2} + \frac{1}{(t^{-n/4} + t^{-m/4})}. 
\end{align*}
Choosing $t = \norm{(-\Delta)^{1/2} g}_{L^2}^{-\gamma}$ yields
\begin{align*}
\norm{g}_{L^2} & \leq \norm{(-\Delta)^{1/2} g}_{L^2}^{1-\gamma/2} + \norm{(-\Delta)^{-1/2} g}_{L^2}^{n \gamma/4} + \norm{(-\Delta)^{1/2} g}_{H^s}^{m\gamma/4}. 
\end{align*}
As $m > n$, it makes sense to set $1-\gamma/2 = m\gamma/4 =: \theta_1$ to obtain $\gamma = (\frac{1}{2} + \frac{m}{4})^{-1}$ so that $\theta_1 \in (0,1)$
and $n \gamma /4 =: \theta_2 \in (0,1)$ as well. 
Hence we have 
\begin{align*}
\norm{g}_{L^2} & \lesssim \norm{(-\Delta)^{1/2}g}_{L^2}^{\theta_1} + \norm{(-\Delta)^{1/2}g}_{L^2}^{\theta_2}.  
\end{align*}
Inequality \eqref{ineq:L1HsSR} follows on recalling that $g = f/\norm{f}_{L^1}$.
\end{proof}

\subsection{Local embeddings and compactness in $L^1$}

For Theorem \ref{thm:HypoFI}, we need to prove the following embedding of the Besov-type norm $\|\cdot\|_{\Lambda^s}$ (defined in \eqref{def:Ls}) into $W^{s,1}$ locally. In the remainder of this section, we will assume that $(\cM,g)$ is a geodesically complete, connected smooth Riemannian manifold. Since we will be working locally, we do not need any assumptions about bounded geometry.
\begin{lemma} \label{lem:WsLs}
Let $U \subset \cM$ be any bounded, open geodesic ball and fix a suitable atlas of $U$, $\set{\mathbf{x}_j}_{j=1}^N$, as above to define the norm \eqref{def:Ls}.
For all $0 < s' < s < 1$ there holds the following $\forall w \in C^\infty_c(U)$
\begin{align*}
\norm{w}_{W^{s',1}} \lesssim_U \norm{w}_{\Lambda^s}. 
\end{align*}
\end{lemma}
\begin{proof}
First, note that we can restrict the integrals and suprema in the norms to be over sufficiently small sets depending on the local geometry.  
Let $B_R(z_0;\cM) = U$ and consider the canonical basis for the tangent space $T_{z_0} \cM$, $\set{\partial_j}$ defined by the pullback of the cannonical directions in $\R^d$ under the exponential map $\exp_{z_0}$.
Using geodesics, this basis can be parallel transported  to a full set of vector fields $\set{Z_k}_{k=1}^d$, over $B(z_0,\delta)$ for some $\delta > 0$ smaller than the injectivity radius and that this set of vector fields forms a basis for the tangent space at every point.
Then we can write (for some $\delta' > 0$ possibly smaller than $\delta$), 
\begin{align*}
\norm{w}_{W^{s',1}} \lesssim  \norm{w}_{L^1} + \int_{\mathcal{M}} \int_{c \in \R^d: \abs{c} \leq \delta'} \frac{\abs{ w(\mathrm{exp}_x \sum_j c_j Z_j(x)) - w(x)}}{\abs{c}^{d + s'}}\dee c\, \dee q(x). 
\end{align*}
Note that since $\sum_j \abs{Z_j(x)} \approx 1$, we have 
\[
d\left(\mathrm{exp}_x \sum_j c_j Z_j(x),x\right) = \left|\sum_j c_jZ_j(x)\right| \approx \abs{c}.
\]
The key point here is that we have re-written the integral over the unit ball in the tangent spaces $T_{z} \cM$ into a ball in $\R^d$ which is the same over all $x \in B(x_0,\delta)$ (more accurately we have estimated the previous integral from above by this quantity plus the $L^1$ norm).
Therefore, we can apply Fubini and conclude that for $s' < s$, 
\begin{align*}
\norm{w}_{W^{s',1}} & \lesssim  \norm{w}_{L^1} +  \int_{c \in \R^d: \abs{c} \leq \delta'} \int_{\mathcal{M}} \frac{\abs{ w(\mathrm{exp}_x \sum_j c_j Z_j(x)) - w(x)}}{\abs{c}^{d + s'}} \dee q(x)\, \dee c \\
& \lesssim  \norm{w}_{L^1} +  \sup_{c \in \R^d: \abs{c} \leq \delta'} \int_{\mathcal{M}} \frac{\abs{ w(\mathrm{exp}_x \sum_j c_j Z_j(x)) - w(x)}}{\abs{c}^{s}}\dee q(x). 
\end{align*}
It is not hard to see that by writing the integral in the coordinate parameterization used to define $\Lambda^s$, the right hand side is bounded above by the $\Lambda^s$ norm.  
\end{proof} 
Next, we give a proof that local $W^{s,1}$ Sobolev smoothness plus a tightness condition estimate yields precompactness in $L^1$. While this type of result is very standard in functional analysis, we could not find a reference that gives the proof in the form we need. A proof is provided for the reader's convenience using a standard compactness criterion in $L^1$ on metric spaces.
\begin{lemma}\label{lem:Ws1Precompact}
Consider a bounded sequence $\set{f_n}_{n=1}^\infty \subset L^1(\cM)$ that satisfies the following properties:
\begin{enumerate}
\item (Local uniform regularity) For every geodesic ball $ B = B_R(x;\cM)$, one has
\[
  \sup_n\norm{\chi_B f_n}_{W^{s,1}} < \infty,
\]
where $\chi_B$ is a smooth cut-off function equal to $1$ inside $B_R(x)$ and compactly supported in $B_{2R}(x)$.
\item (Tightness) There exists an $x_0\in \cM$ such that
\[
  \lim_{R\to \infty} \sup_{n}\|f_n\|_{L^1(\cM\backslash B_R(x_0))} = 0.  
\]
\end{enumerate}
Then the sequence $\{f_n\}_{n=1}^\infty$ is strongly precompact in $L^1(\cM)$ with limit points in $L^1 \cap W^{s,1}_{loc}$. 
\end{lemma}
\begin{proof}
To start, we will use a general Frech\'et-Kolmogorov compactness criterion in $L^1$ on bounded metric spaces due to Krotov \cite{Krotov2012-ne}. We state it below for convenience.

\begin{theorem}[\cite{Krotov2012-ne} Theorem 5]
Let $(X,d)$ be a bounded complete metric space with a finite measure $\mu$ satisfying the doubling condition, that is, there exists a constant $c_\mu >0$ such that
\[
  \mu(B_{2r}(x))\leq c_\mu \mu(B_r(x)), \quad x\in X, \quad r>0.
\]
Let $\mathcal{S}$ be a bounded subset of $L^1(X)$, then $\mathcal{S}$ is precompact in $L^1(X)$ if and only if the following condition is satisfied:
\[
  \lim_{r\to 0}\sup_{f\in \mathcal{S}}\int_{X}\left(\fint_{B_r(x)}|f(x)-f(y)|^{\frac{1}{2}}\dee \mu(y)\right)^{2}\dee\mu(x) = 0.
\]
\end{theorem}

Note that the volume measure on any $n$-dimensional compact Riemannian manifold satisfies $\text{vol}\,B_r(x)\approx r^n$ and is therefore a doubling measure on the complete metric space with metric given by the usual geodesic distance. In what follows fix $x_0$ as above and let $U_R := \overline{B_R(x_0)}$ be a closed geodesic ball for a fixed $R>0$. By the Hopf-Rinow theorem (since $\cM$ is complete) $U_R$ is itself a compact manifold and a bounded complete metric space with the geodesic metric $d$ inherited from $\cM$.
We denote $\hat{B}_r(x) = B_r(x)\cap U_R$ the metric ball on this space.
It follows by Cauchy-Schwarz and the fact that for $r$ small enough $\text{vol}\hat B(x,r) \geqc r^d$ gives
\[
\begin{aligned}
  \left(\fint_{\hat{B}_r(x)}|f(x)-f(y)|^{\frac{1}{2}}\dq(y)\right)^{2}
  &\leqc \left(\int_{\hat B_r(x)}\frac{|f(x)-f(y)|}{d(x,y)^{d+s}}\dq(y)\right) r^s. 
\end{aligned}
\]
Taking $r>0$ smaller than the injectivity radius of $U_R$ and changing coordinates using the inverse exponential map $\exp_x^{-1}$ and using that $d(x,\exp_xh) = |h|$ gives
\[
  \int_{\hat B_r(x)}\frac{|f(x)-f(y)|}{d(x,y)^{d+s}}\dq(y) = \int_{h\in T_x\cM:|h|<r}\frac{|w(\exp_xh) - w(x)|}{|h|^{d+s}}J_x(h)\dee h,
\]
where $J_x$ is the Jacobian factor that describes the volume measure on $U_R$.
Since $U_R$ is compact $J_x(h)$ is bounded and therefore we have
\[
  \int_{U_R}\left(\fint_{\hat B_r(x)}|f(x)-f(y)|^{\frac{1}{2}}\dee q(y)\right)^{2}\dq(x) \leqc \left(\|\chi_{U_{2R}} f\|_{W^{s,1}}\right)r^{s}.
\]

This implies that for each fixed $j$, the set $\{\chi_{U_{2^j}} f_n\}_n$ is a precompact set in $L^1(\cM)$ for all $j$. Diagonalization gives a limit $f \in L^1$ and a subsequence $\set{f_{n_k}}$ that converges in $L^1$ on every geodesic ball.
Now, for all $\eps > 0$, 
\begin{align*}
\norm{f_{n_k} - f}_{L^1} \leq \norm{f_{n_k} - f}_{L^1(U_R)} + \norm{f_{n_k}}_{L^1(\cM \setminus U_R)} +  \norm{f}_{L^1(\cM \setminus U_R)}.
\end{align*}
By tightness, we can choose the last $R > 0$ sufficiently large such that the last two terms are together less than $\eps/2$ and by the convergence on all compacts, choose $n_k$ large enough so that the first term is $\eps/2$.

The fact that $f\in W^{s,1}_{\loc}$ follows from lower semi-continuity of the $W^{s,1}_{\loc}$ semi-norms for $s\in (0,1)$ with respect to $L^1$ convergence (this can be easily be deduced by choosing a further sub-sequence so that $f_{n_k} \to f$ almost surely and applying Fatou's lemma to the double integral of the difference $|f_{n_k}(x)-f_{n_k}(y)|/d(x,y)^{s+d}$).
\end{proof}

\section{Qualitative properties of the projective stationary measure}\label{sec:Basics}
In this section we record basic properties of the SDE \eqref{eq:SDEintroFD22}. 
\begin{theorem} \label{thm:BasicProjDensity} 
Consider the Euler-like model \eqref{eq:SDEintroFD22}.
Suppose that $B$ satisfies Assumption \ref{ass:bilinear2} and that the vector fields $\{\tilde{X}_0^\ep, \tilde{X}_1,...,\tilde{X}_r\}$ satisfies the uniform parabolic H\"ormander condition on $\S\R^n$ as in Definition \ref{def:UniHormander} (Assumption \ref{ass:PHC}). 
Then $\forall \epsilon > 0$, the SDE \eqref{eq:SDEintroFD22} satisfies Assumptions \ref{ass:WP} and \ref{ass:WP2}. 
Moreover, the  stationary measure of the $(w_t)$ process $f^\epsilon$ has a smooth density with respect to Lebesgue measure $f^\epsilon \in L^1 \cap L^2 \cap C^\infty$ with $f^\epsilon \log f^\epsilon \in L^1$,
and $\exists C,\gamma > 0$ such that $\forall \epsilon \in (0,1]$,
\begin{align}
\int_{\S \R^n} f^\epsilon e^{\gamma \abs{x}^2} \, \dee q < C. \label{ineq:tightness}
\end{align}
and $\forall N > 0$ the following moment bound holds $\forall \epsilon \in (0,1]$ (not uniformly in $N$ or $\eps$ of course)
\begin{align}
\int_{\S \R^n} \brak{x}^N f^\epsilon \log f^\eps \, \dee q < \infty. \label{ineq:tightness}
\end{align}
Furthermore, the estimate in Assumption \ref{ass:WP} (iii) holds for all $\epsilon > 0$. 
\end{theorem}
\begin{proof}[Proof of Theorem \ref{thm:BasicProjDensity}]
Claims (i) and (ii) of Assumption \ref{ass:WP} are standard or proved in \cite{BL20}.
The proof of Assumption \ref{ass:WP} (iii) follows by providing suitable moment estimates on $\log \abs{\det D \Phi^t}$ and $\log \abs{D\Phi^t}$ using the SDE derived in the proof of Proposition \ref{lem:FK}.
Indeed, denoting $J_t = D\Phi^t(x) \xi$ we have
\begin{align*}
\frac{d}{dt} J_t = B(x,J_t) + B(J_t,x_t) + \eps AJ_t, 
\end{align*}
and hence, $\exists C> 0$, such that 
\begin{align*}
\norm{J_t} \leq \norm{\xi} \exp \left(C \int_0^t \norm{x_\tau} \dee \tau \right).
\end{align*}
By time-reversal of ODEs, $V_t = J_t^{-1}$ satisfies the analogous estimate, possibly by adjusting $C$ (the growth is due to the damping)
\begin{align*}
\norm{J_t^{-1}} \leq \norm{\xi} \exp \left(C \int_0^t (\eps + \norm{x_\tau}) \dee \tau \right). 
\end{align*}
Therefore,
\begin{align*}
\E \int_{\R^n} \left[\log^+{|D_x\Phi^{t}|} + \log^+{|(D_x\Phi^{t})^{-1}|} \right]\dee \mu(x) & \lesssim \E \int_{\R^n} \left[ \eps t + \int_0^t \norm{x_\tau} \dee \tau \right]\dee \mu(x)\\ 
& = t \left( \eps + \int_{\R^n} \norm{x} \dee \mu(x) \right), 
\end{align*}
which is finite by standard moment estimates (see e.g. the drift condition \eqref{ineq:fmombd} below). This completes the proof of Assumption \ref{ass:WP}. 

The results of Assumption \ref{ass:WP2} follow from similar methods
with the exception of the uniqueness of the stationary measure. By the parabolic H\"ormander's condition on $\{\tilde{X}_0, \tilde{X}_1,...,\tilde{X}_r\}$ together with H\"ormander's theorem \cite{H67,H11} and the Doob-Khasminskii theorem \cite{DPZ96}, it is sufficient to verify irreducibility as in Definition \ref{def:Irr}. 
This follows by Proposition \ref{prop:projective irreducibility}.
We are not aware of any existing results that directly imply $f^\epsilon \in L^2$ or $f^\epsilon \log f^\epsilon \in L^1$ in the literature and we therefore include the proof. 
For this we use some ideas that appear in \cite{BL20}, in which similar estimates are proved for the density of the base process $\rho^\eps$.
As in \cite{BL20}, a convenient method to justify many formal calculations begins by first regularizing the problem by adding elliptic Brownian motions. Recall that the generator $\tilde{\mathcal{L}}$ for the projective process $(w_t)$ is given by
\[
	\tilde{\mathcal{L}} = \tilde{X}_0 + \frac{1}{2}\sum_{k=1}^r \tilde{X}_k^2.
\]
We then regularize this by the perturbing the generator
\begin{align*}
\tilde{\mathcal{L}}^\delta = \tilde{\mathcal{L}} + \delta\Delta_{x,v}, 
\end{align*}
where $\Delta_{x,v} = \Delta_x + \Delta_v$ with $\Delta_x$ the usual Laplacian on $\R^n$ and $\Delta_v$ the Laplace-Beltrami operator on $\S^{n-1}$. This corresponds to perturbing the SDE \eqref{eq:SDEintroFD22} by a non-degenerate $\sqrt{\delta}$ Brownian motion on $\S\R^n$.
It is not hard to show that $\tilde{\mathcal{L}}^\delta$ satisfies a drift condition\footnote{The lifted part of the vector fields vanish due to the lack of $v$ dependence, and so this reduces to the same drift condition for the base process. This follows immediately noting that since $x \cdot B=0$ we have $X_0 e^{\gamma\abs{x}^2} = 0$, that  $\abs{\partial_{x_j x_j} e^{\gamma\abs{x}^2}} \lesssim (\gamma^2 \abs{x}^2 + \gamma)e^{\gamma \abs{x}^2}$, and by negative-definiteness, $Ax \cdot \grad e^{\gamma \abs{x^2}} \lesssim -\gamma\abs{x}^2 e^{\gamma \abs{x}^2}$. Hence, the condition follows for $\gamma$ chosen sufficiently small.} 
\[
	\tilde{\mathcal{L}}^\delta e^{\gamma |x|^2} \leq - \eps \alpha e^{\gamma |x|^2} + \eps K,
\]
for some $\alpha \in (0,1)$, $K \geq 1$ (uniformly in $\epsilon,\delta$).
This gives rise to a globally defined Markov process $(w_t^\delta)$. Moreover for a given initial density $f \in C^\infty_c(\S \R^d)$ with $\int f \dee q = 1$ and $f \geq 0$, such that $\mathrm{Law}(w^\delta_0) = f$ we denote $f_t = \mathrm{Law}(w^\delta_t)$, which solves the forward Kolmogorov equation
\begin{equation}\label{eq:kolmog-eq}
	\partial_t f_t = (\tilde{\mathcal{L}}^\delta)^*f_t + \delta \Delta_{x,v}f_t.
\end{equation}
From the drift condition we have, $\forall \gamma$ sufficiently small, $\exists \alpha \in (0,1)$ such that (uniformly in $\epsilon,\delta$), 
\begin{align}
\int_{\S \R^d} f_t e^{\gamma\abs{x}^2} \dee q \lesssim 1 + e^{-\alpha t} \int_{\S \R^d} f e^{\gamma \abs{x}^2} \dee q. \label{ineq:fmombd}
\end{align}
Let $\bar{\chi} \in C^\infty_c(B(0,1))$ with $0 \leq \bar{\chi} \leq 1$, and $\bar{\chi} = 1$ for $|x| \leq 1/2$ and define $\chi(x) = \bar\chi(x/2) - \bar\chi(x)$. Define $\chi_j = \chi(2^{-j} x)$, which defines the partition of unity $1 = \bar{\chi} + \sum_{j=0}^\infty \chi_j(x)$. 
From energy estimates on \eqref{eq:kolmog-eq} we have the following,
\begin{align*}
\frac{\dee}{\dt}\lnorm{f_t}_{L^2}^{2} + \delta\lnorm{(-\Delta_{x,v})^{1/2} f_t}_{L^2}^2 & \lesssim \lnorm{(1+ |x|) f_t}_{L^2}^2 \lesssim \lnorm{\bar{\chi} f_t}_{L^2}^2 + \sum_{j=1}^\infty 2^{2j} \lnorm{\chi_j f_t}_{L^2}^2, 
\end{align*}
(in order to justify such estimates one may apply smooth, $v$-independent radially symmetric cut-offs to the nonlinearity and pass to the limit).  
By the Gagliardo-Nirenberg-Sobolev-type inequality \eqref{ineq:L1H1SR} (Lemma \ref{lem:GNSSR}), for $\theta_1,\theta_2 \in (0,1)$ as therein, there holds (recall that $\lnorm{f_t}_{L^1}=1$)  
\begin{align*}
\frac{\dee}{\dt}\lnorm{f_t}_{L^2}^{2} + \delta\lnorm{(-\Delta_{x,v})^{1/2} f_t}_{L^2}^2 & \lesssim \lnorm{\bar{\chi} f_t}_{L^2}^2 + \sum_{j=1}^\infty 2^{2j} \lnorm{\chi_j f_t}_{L^2}^2 \\  
& \hspace{-4cm} \lesssim \lnorm{(-\Delta_{x,v})^{1/2} \bar{\chi} f_t}_{L^2}^{2\theta_1} \lnorm{\bar{\chi} f_t}_{L^1}^{1-2\theta_1} + \sum_{j=1}^\infty 2^{2j} \lnorm{(-\Delta_{x,v})^{1/2}\chi_j f_t}_{L^2}^{2\theta_1} \lnorm{ \chi_j f_t}_{L^1}^{1-2\theta_1} \\
& \hspace{-4cm} \quad + \lnorm{(-\Delta_{x,v})^{1/2} \bar{\chi} f_t}_{L^2}^{2\theta_2} \lnorm{\bar{\chi} f_t}_{L^1}^{1-2\theta_2} + \sum_{j=1}^\infty 2^{2j} \lnorm{(-\Delta_{x,v})^{1/2}\chi_j f_t}_{L^2}^{2\theta_2} \lnorm{ \chi_j f_t}_{L^1}^{1-2\theta_2} \\
& \hspace{-4cm} \lesssim \lnorm{(-\Delta_{x,v})^{1/2} f_t}_{L^2}^{2\theta_1} + \lnorm{\grad_x \bar{\chi} f_t}_{L^2}^{2\theta_1} + \sum_{j=1}^\infty 2^{2j} \left( \lnorm{(-\Delta_{x,v})^{1/2} f_t}_{L^2}^{2\theta_1} + \lnorm{\grad_x \chi_j f_t}_{L^2}^{2\theta_1} \right) \lnorm{ \chi_j f_t}_{L^1}^{1-2\theta_1} \\
& \hspace{-4cm} \quad +  \lnorm{(-\Delta_{x,v})^{1/2} f_t}_{L^2}^{2\theta_2} + \lnorm{\grad_x \bar{\chi} f_t}_{L^2}^{2\theta_2} + \sum_{j=1}^\infty 2^{2j} \left( \lnorm{(-\Delta_{x,v})^{1/2} f_t}_{L^2}^{2\theta_2} + \lnorm{\grad_x \chi_j f_t}_{L^2}^{2\theta_2} \right) \lnorm{ \chi_j f_t}_{L^1}^{1-2\theta_2}. 
\end{align*}
Therefore, 
\begin{align*}
\frac{\dee}{\dt}\lnorm{f_t}_{L^2}^{2} + \delta\lnorm{(-\Delta_{x,v})^{1/2} f_t}_{L^2}^2
& \lesssim_\delta  \lnorm{(-\Delta_{x,v})^{1/2}f_t}_{L^2}^{2\theta_1} + \lnorm{(-\Delta_{x,v})^{1/2}f_t}_{L^2}^{2\theta_2} + \int_{\S \R^d} f_t e^{\gamma \abs{x}^2} \dee q. 
\end{align*}

Hence, from \eqref{ineq:fmombd}, there holds for some $q > 2$, 
\begin{align*}
\frac{1}{t}\int_0^t \lnorm{(-\Delta_{x,v})^{1/2} f_\tau}_{L^2}^2  \dee\tau \lesssim \delta^{-q} \int_{\S \R^d} f e^{\gamma \abs{x}^2} \dee q.  
\end{align*}
Combined with the uniform drift condition, this allows to pass to the limit $t \to \infty$ and conclude that the density of the unique stationary measure, denoted below as $f^{\epsilon,\delta}$ is in $H^1(\S \R^d)$ (recall \eqref{ineq:L1HsSR}); we note that $f^{\epsilon,\delta}$ is a smooth solution of the Kolmogorov equation
\begin{align}
\left(\tilde{\mathcal{L}}^\ast +\delta\Delta_{x,v} \right) f^{\epsilon,\delta} = 0. \label{eq:fed}
\end{align}
Next, we obtain an $L^2$ estimate that is uniform in $\delta$ in order to pass to the $\delta \to 0$ limit.
For this, we clearly need to depend on hypoelliptic regularity. 
Define the regularized H\"ormander norm pair (see discussions in \cite{H67,AM19,BL20} for motivations),  
\begin{align*}
\lnorm{w}_{\mathcal{H}_\delta} & := \lnorm{w}_{L^2} + \sum_{k=1}^r \lnorm{X_k w}_{L^2} + \delta \lnorm{(-\Delta_{x,v})^{1/2}w}_{L^2} \\ 
\lnorm{w}_{\mathcal{H}^\ast_\delta} & := \sup_{\varphi: \lnorm{\varphi}_{\mathcal{H}_\delta} \leq 1} \abs{ \int_{\S \R^n}  (\tilde{X}_0\varphi) w\, \dee q}
\end{align*}
The proof is similar to  [Lemma 2.3; \cite{BL20}] provided we have the following quantification of H\"ormander's inequality. 
\begin{lemma}[Quantitative H\"ormander inequality for the projective process] \label{lem:HineqBall}
Suppose that $\{\tilde{X}_0, \tilde{X}_1,...,\tilde{X}_r\}$ satisfies the uniform parabolic H\"ormander condition on $B(0,2) \times \S^{n-1}$ as in Definition \ref{def:UniHormander}. There exists $s > 0$ and $q > 0$, such that for any $R \geq 1$,  $w \in C^\infty_c(B_R \times \S^{n-1})$ and $\delta \in [0,1]$ there holds
\begin{align}
\|w\|_{H^s} \leqc R^{q} (\lnorm{w}_{\mathcal{H}_\delta}  + \lnorm{w}_{\mathcal{H}^\ast_\delta}), \label{ineq:H1hypProj}
\end{align}
where both $s>0$ and the implicit constant do not depend on $\epsilon$, $\delta$, or $R$, and $H^{s}= W^{s,2}$.

\end{lemma}
\begin{proof}
The proof begins with a re-scaling as in [Lemma 3.2; \cite{BL20}]. Define $h(x,v) = w(Rx,v)$ which solves a PDE of the following form for suitable vector fields $N$, $V$, $Y$,
\begin{align*}
\epsilon \delta \Delta_{x,v} h + \frac{1}{2}\sum_{j=1}^r \epsilon (\tilde{X}_j^*)^2 h - N h + R^{-1} V^\ast h - \frac{\epsilon}{R} Y^* h = 0. 
\end{align*}
where $N(x) = B(x,x)$, $Y(x) = Ax$ and $V(x,v) = \Pi_v \nabla F(x) v$, and their action on $h$ is interpreted as a differential operator.  
We see that the proof here is more subtle than in the corresponding [Lemma 3.2; \cite{BL20}] as  $R^{-1} V$ is required to span the directions in projective space.
From Proposition \ref{prop:class-proj-span}, we see that the spanning in $x$ and $v$ can be considered essentially separately, first choosing brackets to span in $x$ and then correcting by choosing suitable brackets in $\mathfrak{m}_x(X_0;X_1,\ldots,X_r)$ to span in $v$.
Using this structure we see that given a vector field $Z \in \mathfrak{X}(\S\R^n)$ and $q_0 \in B(0,1)\times \S^{n-1}$,
there exists $p_j < ... < p_2 < p_1 \leq k$ (with $k$ as in Definition \ref{def:UniHormander})
such that for $q$ in a neighborhood of $q_0$, there are finitely many smooth coefficients $c_j$ and vectors $Z_j \in \mathscr{X}_k$ with 
\begin{align*}
Z(q) = \sum_{j} R^{p_j} c_j(q) Z_j(q), 
\end{align*}
where if $Z$ varies in a bounded set in $C^m$, then $\set{c_j}_{j}$ varies in a similarly bounded set as well.
A careful reading of \cite{H67} shows that this introduces powers of $R$ matching the powers of $t$ into all of the estimates in [Sections 4 and 5; \cite{H67}], the maximal power arising being $R^{k}$. In particular,  the error estimates come in the form $\mathcal{O}(R^{k/\sigma})$, provided that $R^{k} t < 1$ and $0 < \sigma < s^*$ as in \cite{H67}. 
This restriction on $t$ in the estimates further introduces only polynomial dependence on $R$, as for any $Z \in \mathfrak{X}(\S\R^n)$,
\begin{align*}
\sup_{\abs{t} \leq 1} \abs{t}^{-\sigma} \norm{e^{t Z} g - g}_{L^2}\lesssim R^{k \sigma}\norm{g}_{L^2} + \sup_{\abs{t} \leq R^{-k}} \abs{t}^{-\sigma} \norm{e^{t Z} g - g}_{L^2}.
\end{align*}
Combining the above observations with those of \cite{H67} implies that the constant in \eqref{ineq:H1hypProj} remains polynomial in $R$ (exponential would also be sufficient for our purposes, as we only use that the constant is bounded above by $e^{\eta R^2}$ for  $\eta < \gamma$). 
\end{proof}
Once one has Lemma \ref{lem:HineqBall}, the proof of Theorem \ref{thm:BasicProjDensity} follows easily, given that we are not seeking $\epsilon$-independent bounds, as these such bounds will be false for all but the most degenerate models (see [Lemma 2.4; \cite{BL20}] for the corresponding argument on $\rho^\epsilon$, which does yield $\epsilon$-independent estimates). 
Let $\bar{\chi} \in C^\infty_c(B(0,1))$ with $0 \leq \bar{\chi} \leq 1$, and $\bar{\chi} = 1$ for $x \leq 1/2$ and define $\chi(x) = \chi(x/2) - \chi(x)$. Define $\chi_j = \chi(2^{-j} x)$, which defines the partition of unity $1 = \bar{\chi} + \sum_{j=0}^\infty \chi_j(x)$. 

We now obtain a uniform-in-$\delta$ $L^2$ estimate. By Lemma \ref{lem:HineqBall} and the Gagliardo-Nirenberg-Sobolev-type estimate \eqref{ineq:L1HsSR} (recall $f^{\epsilon,\delta}$ is a probability measure), there holds   
\[\lnorm{f^{\epsilon,\delta}}_{L^2}  \lesssim \lnorm{\bar{\chi}f^{\epsilon,\delta} }_{H^1_{\textup{Hyp},\delta}}^{1-\theta} + \sum_{j=1}^\infty 2^{jq(1-\theta)}\lnorm{\chi_j f^{\epsilon,\delta}}_{L^1}^{\theta}\lnorm{\chi_j f^{\epsilon,\delta} }_{H^1_{\textup{Hyp},\delta}}^{1-\theta}, 
\]
where we have denoted $\|\cdot\|_{H^1_{\textup{Hyp},\delta}} = \|\cdot\|_{\mathcal{H}_\delta} + \|\cdot\|_{\mathcal{H}^*_{\delta}}$. Pairing \eqref{eq:fed} with $\bar{\chi} f^{\epsilon,\delta}$ and $\chi_j f^{\epsilon,\delta}$ followed by standard manipulations gives
\begin{align*}
\lnorm{\bar{\chi} f^{\epsilon,\delta}}_{H^1_{\textup{Hyp},\delta}} +  \sup_j \lnorm{\chi_j f^{\epsilon,\delta}}_{H^1_{\textup{Hyp},\delta}} \lesssim_\epsilon \lnorm{f^{\epsilon,\delta}}_{L^2}. 
\end{align*}
Therefore, we have
\begin{align*}
\lnorm{f^{\epsilon,\delta}}_{L^2} & \lesssim \lnorm{f^{\epsilon,\delta}}_{L^2}^{\theta} + \sum_{j=1}^\infty 2^{jq(1-\theta)} \lnorm{\chi_j f^{\epsilon,\delta} }_{L^1}^{\theta} \lnorm{f^{\epsilon,\delta}}_{L^2}^{\theta}, 
\end{align*}
which implies that for $\theta_1,\theta_2 \in (0,1)$ we have a uniform-in-$\delta$ estimate on the $L^2$ norm.
Note that the estimate still depends badly on $\epsilon$.
Passing to the $\delta \to 0$ limit shows that $f^\epsilon \in L^2$ for each $\epsilon > 0$.

Finally, observe that $\brak{x}^N f^{\epsilon} \log f^{\epsilon} \in L^1$ for all $N > 0$ indeed,
\begin{align*}
\int_{\S \R^n} \brak{x}^N f^{\epsilon} \abs{\log f^\epsilon} \dee q & \lesssim \int_{\S \R^n} \brak{x}^N \left(\sqrt{f^{\epsilon}}  + (f^{\epsilon})^{3/2} \right) \dee q  \lesssim \lnorm{f^{\epsilon} e^{\gamma \abs{x}^2}}_{L^1} + \lnorm{f^{\epsilon} e^{\gamma \abs{x}^2}}_{L^1}^{1/4}\norm{f^\epsilon}_{L^2}^{3/4}. 
\end{align*}
Note that in fact, one can obtain similar moment estimates also on $L^2$. 
This completes the proof of Theorem \ref{thm:BasicProjDensity}. 
\end{proof} 
\section{Properties of projective lifts and the Furstenberg-Khasminskii formula}\label{sec:Basics2}

\subsection{Sufficient conditions for projective spanning: Proof of Proposition \ref{prop:class-proj-span}}\label{subsec:projSpanSuffCondAppendix}

 In this section, we give a proof of Proposition \eqref{prop:class-proj-span} that characterizes when a collection of lifted vector fields $\{\widetilde{X}_k\}_{k=0}^r$ satisfy the H\"ormander condition on $\S M$ in terms of transitivty of the matrix lie algebra $\mathfrak{m}_x(X_0,\ldots,X_r)$ (defined in \eqref{eq:m-lie-alg-def}).

Before we prove Proposition \ref{prop:class-proj-span}, we will need some preliminary results. As we will be taking commutators of the above vector fields, it is important to record how projective vector fields behave under the Lie bracket.

\begin{lemma} \label{lem:MatProj}
Let $A, B \in \mathfrak{sl}(\R^n)$, then the following identity holds
\[
[V_A,V_B](v) = -V_{[A,B]}(v),
\]
where $[A, B] := A B - B A$ denotes the usual commutator on linear operators.
\end{lemma}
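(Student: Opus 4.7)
The strategy is to use the explicit formula for the covariant derivative $\nabla V_A$ already established in the paper, together with the torsion-free identity $[X,Y] = \nabla_X Y - \nabla_Y X$ for vector fields on a Riemannian manifold. Recall from \eqref{eq:covariant-V-der} that for any linear $A \in \mathfrak{sl}(\R^n)$ and $v \in \S^{n-1}$,
\[
\nabla V_A(v) = \Pi_v A - \langle v, Av \rangle I.
\]
Thus, along any tangent vector $w \in T_v\S^{n-1}$, one has $\nabla_w V_A = \Pi_v A w - \langle v, Av\rangle w$.

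Next I would expand
\[
[V_A, V_B](v) = \nabla_{V_A} V_B - \nabla_{V_B} V_A = \bigl(\Pi_v B - \langle v, Bv\rangle I\bigr) V_A(v) - \bigl(\Pi_v A - \langle v, Av\rangle I\bigr) V_B(v).
\]
The scalar-multiplied terms $\langle v, Bv\rangle V_A(v)$ and $\langle v, Av\rangle V_B(v)$ will appear twice (with opposite signs) once the two remaining products are expanded, so they cancel. Concretely, inserting $V_A(v) = Av - \langle v, Av\rangle v$ and using that $\Pi_v v = 0$ gives
\[
\Pi_v B \, V_A(v) = \Pi_v BAv - \langle v, Av\rangle V_B(v),
\]
and symmetrically $\Pi_v A\, V_B(v) = \Pi_v ABv - \langle v, Bv\rangle V_A(v)$.

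Substituting and collecting terms, the four scalar contributions cancel in pairs, leaving
\[
[V_A, V_B](v) = \Pi_v BAv - \Pi_v ABv = -\Pi_v [A,B] v = -V_{[A,B]}(v),
\]
which is the desired identity. The only real work is the bookkeeping of the scalar cancellations; the formula for $\nabla V_A$ from \eqref{eq:covariant-V-der} does all the geometric heavy lifting, and no further input about the sphere is needed. I expect this to be essentially a direct computation with no significant obstacle.
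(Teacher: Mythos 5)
Your proof is correct and follows essentially the same route as the paper: both invoke the torsion-free identity $[V_A,V_B]=\nabla_{V_A}V_B-\nabla_{V_B}V_A$, substitute the formula $\nabla V_A(v)=\Pi_v A-\langle v,Av\rangle I$ from \eqref{eq:covariant-V-der}, and carry out the identical scalar cancellations to arrive at $\Pi_v BAv-\Pi_v ABv=-V_{[A,B]}(v)$. The only cosmetic difference is that the paper phrases the cancellation via the identities $\Pi_vB\Pi_vAv+\langle v,Av\rangle V_B(v)=\Pi_v BAv$ rather than expanding $V_A(v)$ directly, which is the same bookkeeping.
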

\begin{proof}
Let $\nabla$ denote the Levi-Civita connection on $\S^{n-1}$, then since $\nabla$ is torsion-free, we have the following formula for the Lie bracket in terms of the covariant derivative
\[
	[V_A,V_B] = \nabla_{V_A}V_B - \nabla_{V_B}V_A.
\]
Recall from the proof of Lemma \ref{eq:Div-identity} that using the embedding of $\S^{n-1}$ into $\R^n$, we have the following formula for the total covariant derivative of $V_A$ (viewed as a linear operator on $T_v\S^{n-1}$)
\[
	\nabla V_A(v) = \Pi_v A - \langle v, Av\rangle I.
\]
It follows that
\[
\begin{aligned}
	[V_A,V_B](v) &= \nabla V_B(v) V_A(v) - \nabla V_A(v)V_B(v)\\
	&= \Pi_v B \Pi_vAv - \Pi_v A \Pi_vBv - \langle v, Bv\rangle V_A(v) +\langle v, Av\rangle V_B(v).
\end{aligned}
\]
Using the fact that $\Pi_v u = u - \langle u, v \rangle v$ for $u \in T_x M$,  we find
\[
	\Pi_v B \Pi_vAv + \langle v,Av\rangle V_B(v) = \Pi_v BAv,
\]
and 
\[
	\Pi_v A \Pi_vBv + \langle v,Bv\rangle V_A(v) = \Pi_v ABv, 
\]
hence
\[
[V_A,V_B]= V_{BA} - V_{AB} = -V_{[A,B]} \, . \qedhere
\]
\end{proof}



Of fundamental importance is the following observation for the lifting operation $X\mapsto \tilde{X}$.
\begin{lemma}\label{lem:lifting-isomorph}
Any two vector fields $X,Y \in \mathfrak{X}(M)$ satisfy the identity
\[
	[\tilde{X},\tilde{Y}] = [X,Y]{\,\,}^{\widetilde{}}\,.
\]
Thus the lifting operation $X \mapsto \tilde{X}$ is a Lie algebra isomorphism onto $\tilde{\mathfrak{X}}(M) = \{\tilde{X}\,:\, X\in \mathfrak{X}(M)\}$.
\end{lemma}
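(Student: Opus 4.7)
The plan is to identify the flow of the lifted vector field $\tilde X$ on $\S M$ as the projectivized derivative flow induced by $X$, and then invoke the standard Lie-derivative characterization of the bracket. First I would verify that if $\phi^X_t : M \to M$ is the flow of $X$, then the flow of $\tilde X$ on $\S M$ is the \emph{projective derivative lift}
\[
\widehat{\phi^X_t}(x, v) := \left(\phi^X_t(x),\; \frac{D_x \phi^X_t (v)}{|D_x \phi^X_t(v)|}\right).
\]
The base component is immediate. For the fiber, set $w_t := D_x \phi^X_t(v)$, so that $\dot w_t = \nabla X(\phi^X_t x)\, w_t$ by linearization of the flow; differentiating $v_t := w_t/|w_t|$ and using $\partial_t|w_t| = \langle v_t, \dot w_t\rangle$ yields $\dot v_t = V_{\nabla X(\phi^X_t x)}(v_t)$, exactly the defining ODE of the fiber component of $\tilde X$.

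The heart of the argument is the following \emph{equivariance identity}: for any diffeomorphism $f : M \to M$ with projective derivative lift $\hat f : \S M \to \S M$ (defined by the same formula as above) and any vector field $Y \in \mathfrak{X}(M)$,
\[
\hat f_* \tilde Y \;=\; \widetilde{f_* Y}.
\]
Granting this, applying it with $f = \phi^X_{-t}$ and using the standard Lie-derivative formula $[\tilde X, \tilde Y] = \tfrac{d}{dt}\big|_{t=0} (\widehat{\phi^X_{-t}})_* \tilde Y$ gives
\[
[\tilde X, \tilde Y] \;=\; \frac{d}{dt}\bigg|_{t=0} (\widehat{\phi^X_{-t}})_* \tilde Y \;=\; \frac{d}{dt}\bigg|_{t=0} \widetilde{(\phi^X_{-t})_* Y} \;=\; \widetilde{[X,Y]},
\]
where the last equality uses linearity and smoothness of $Y \mapsto \tilde Y$ in $Y$ to commute $\tfrac{d}{dt}$ with $\widetilde{(\cdot)}$, together with $[X,Y] = \tfrac{d}{dt}\big|_{t=0}(\phi^X_{-t})_* Y$.

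The main obstacle is verifying the equivariance identity $\hat f_* \tilde Y = \widetilde{f_* Y}$. In the base direction it is immediate from the chain rule, but in the fiber direction one must check that $D\hat f(x,v)$ acting on $T_v\S_x M$ matches the fiber component of $\widetilde{f_* Y}$ evaluated at $\hat f(x,v)$. This amounts to tracking how $\nabla Y$ transforms under pushforward by $f$ and composing with the projection $\Pi_v$ and the normalization by $|D_x f\, v|$; the formulas for the covariant derivative of $f_* Y$ and the definition $V_A(v) = \Pi_v A v$ are set up precisely so that the two sides coincide. An alternative, more computational, approach would be to work entirely in local coordinates using the Sasaki splitting $T_{(x,v)}\S M = T_x M \oplus T_v \S_x M$ and compute the bracket component by component, but the bookkeeping with Christoffel symbols is substantially more tedious. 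The final assertion that $X \mapsto \tilde X$ is a Lie algebra isomorphism onto its range then follows at once: injectivity holds since $\tilde X$ projects to $X$ under $\pi : \S M \to M$, and the bracket identity just proved is exactly the homomorphism property.
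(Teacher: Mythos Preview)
Your approach is correct and genuinely different from the paper's. The paper proceeds by a direct computation in the Sasaki splitting $T_{(x,v)}\S M = T_x M \oplus T_v \S_x M$: it writes $\tilde X = \hat X + \hat V_{\nabla X}$, expands $[\tilde X, \tilde Y]$ into four bracket terms, and then uses the Levi-Civita connection $\tilde\nabla$ on $\S M$, the curvature tensor, and the first Bianchi identity to reassemble them into $\widetilde{[X,Y]}$ (with Lemma~\ref{lem:MatProj} handling the purely vertical term $[\hat V_{\nabla X},\hat V_{\nabla Y}]$). Your argument instead exploits that the lift on \emph{flows} is functorial: $\widehat{\phi^X_t}$ is the flow of $\tilde X$, and the assignment $f \mapsto \hat f$ on diffeomorphisms is a group homomorphism.

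One remark on the step you flag as the main obstacle. The equivariance $\hat f_* \tilde Y = \widetilde{f_* Y}$ is in fact immediate from functoriality, and you need not ``track how $\nabla Y$ transforms under pushforward by $f$''---that route is genuinely awkward, since the Levi-Civita connection is not natural under non-isometric diffeomorphisms. Instead, compare flows: the flow of $\widetilde{f_* Y}$ is $\widehat{\phi^{f_*Y}_t} = \widehat{f \circ \phi^Y_t \circ f^{-1}}$, while the flow of $\hat f_* \tilde Y$ is $\hat f \circ \widehat{\phi^Y_t} \circ \hat f^{-1}$, and these agree because $\widehat{g_1 \circ g_2} = \hat g_1 \circ \hat g_2$ (chain rule, plus the observation that the normalization $w \mapsto w/|w|$ is scale-invariant). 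This closes the gap without any connection or curvature computation. Your route is more conceptual and arguably cleaner; the paper's route is self-contained at the level of covariant derivatives and makes the Riemannian geometry of $\S M$ explicit, at the cost of invoking the Bianchi identity.
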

\begin{proof}
Given a vector field $X$ on $M$, it's lift $\widetilde{X}$ on $\S M$ we can always be split into horizontal $\hat{X}$ and vertical $\hat{V}_{\nabla X}$ fields as 
\[
	\widetilde{X} = \hat{X} + \hat{V}_{\nabla X},
\]
according to the orthogonal splitting $T_{(x,v)}\S M = T_xM\oplus T_v\S_x M$ induced by the Sasaki-Metric $\tilde{g}$ and the associated lift of the Levi-Civita connection $\tilde{\nabla}$ on $\S M$. Explicitly, the horizontal $\hat{X}$ and vertical $\hat{V}_{\nabla X}$ components of $\tilde{X}$ are given by
\[
	\hat{X}(x,v) = (X(x),0) \, , \quad \hat{V}_{\nabla X}(x,v) = (0, V_{\nabla X(x)}(v)).
\]
Let $U(x,v) = (v, 0)$ be the ``canonical' horizontal vector field on $\S M$. Note that $U$ is parallel to $\hat{X}$ in the sense that $\widetilde{\nabla}_{\hat{X}}U = 0$. Additionally, define a linear mapping $\hat{\Pi}_{(x,v)}$ on $T_{(x,v)}\S M$ by
\[
	\hat{\Pi}_{(x,v)}( u_1, u_2) = (0, \Pi_{(x,v)} u_1),
\]
so that we can express the vertical field $\hat{V}_{\nabla X}$ as
\[
	\hat{V}_{\nabla X} = \hat{\Pi}\tilde{\nabla}_U\hat{X}.
\]
Note that for any ``horizontal'' vector field $\hat{X}$, $\widetilde{\nabla}_{\hat X}\hat{\Pi} =0$ holds since $\nabla$ preserves the metric $g$. In light of the fact that $[\hat{X},\hat{Y}] = [X,Y]{\,}^{\widehat{}}$, our proof will be complete if we show that
\[
	[\tilde{X},\tilde{Y}] = [\hat{X},\hat{Y}] + \hat{\Pi}\tilde{\nabla}_U[\hat{X},\hat{Y}].
\]
The Lie bracket of $\tilde{X}$ and $\tilde{Y}$ can be written as
\begin{equation}\label{eq:commutator-expand}
	[\tilde{X},\tilde{Y}] = [\hat{X},\hat{Y}] + [\hat{X}, \hat{V}_{\nabla Y}] - [\hat{Y},\hat{V}_{\nabla X}] + [\hat{V}_{\nabla X},\hat{V}_{\nabla Y}].
\end{equation}
Likewise, a simple consequence of Lemma \ref{lem:MatProj} implies
\[
	[\hat{V}_{\nabla X},\hat{V}_{\nabla Y}] = -\hat{\Pi}[\tilde{\nabla} \hat{X},\tilde\nabla \hat Y] U = \hat{\Pi} \left(\tilde{\nabla}_{[U,\hat{X}]}\hat{Y} - \tilde{\nabla}_{[U,\hat{Y}]}\hat{X}\right),
\]
where above $[\tilde{\nabla}\hat X, \tilde\nabla \hat Y]$ denotes the commutator of $\tilde\nabla \hat X, \tilde\nabla \hat Y$ viewed as linear endomorphisms on a fixed tangent space $T_{(x,v)}\S M$. The remaining terms in equation \eqref{eq:commutator-expand} can be computed as
\[
	[\hat{X}, \hat{V}_{\nabla Y}] - [\hat Y,\hat{V}_{\nabla X}] = \tilde{\nabla}_{\hat{X}}\hat{V}_{\nabla Y} - \tilde{\nabla}_{\hat{Y}}\hat{V}_{\nabla X} = \hat\Pi\left(\tilde\nabla_{\hat{X}}\tilde\nabla_{U}\hat{Y} - \tilde\nabla_{\hat{Y}}\tilde\nabla_{U}\hat{X}\right).
\]
Therefore, putting everything together, we find
\[
	[\tilde{X},\tilde{Y}] = [X,Y]{\,}^{\widehat{}} + \hat\Pi\left(\tilde\nabla_{\hat{X}}\tilde\nabla_{U}\hat{Y} - \tilde\nabla_{\hat{Y}}\tilde\nabla_{U}\hat{X} + \tilde{\nabla}_{[U,\hat{X}]}\hat{Y} - \tilde{\nabla}_{[U,\hat{Y}]}\hat{X}\right).
\]
The proof will be complete once we show the identity
\begin{equation}\label{eq:covariant-comm-id}
	\tilde\nabla_{\hat{X}}\tilde\nabla_{U}\hat{Y} - \tilde\nabla_{\hat{Y}}\tilde\nabla_{U}\hat{X} + \tilde{\nabla}_{[U,\hat{X}]}\hat{Y} - \tilde{\nabla}_{[U,\hat{Y}]}\hat{X} = \tilde{\nabla}_U[\hat{X},\hat{Y}] \,. 
\end{equation}
For this, we can use the Riemann curvature tensor on $\S M$
\[
	\tilde{R}(X,Y)Z  := \tilde{\nabla}_X\tilde\nabla_Y Z - \tilde\nabla_Y\tilde\nabla_X Z - \tilde\nabla_{[X,Y]}Z
\] 
to change the order of covariant derivatives, giving
\begin{equation}
\begin{aligned}
	\tilde\nabla_{\hat{X}}\tilde\nabla_{U}\hat{Y} - \tilde\nabla_{\hat{Y}}\tilde\nabla_{U}\hat{X} + \tilde{\nabla}_{[U,\hat{X}]}\hat{Y} - \tilde{\nabla}_{[U,\hat{Y}]}\hat{X}
	&= \tilde{R}(\hat X,U)\hat Y - \tilde R(\hat Y,U)\hat X + \tilde\nabla_U\tilde\nabla_{\hat X} \hat Y - \tilde \nabla_U\tilde \nabla_{\hat Y} \hat X\\
	&= \tilde{R}(\hat X,U)\hat Y + \tilde R(U,Y)\hat X + \tilde{\nabla}_U[\hat X,\hat Y].
\end{aligned}
\end{equation}
The first Bianchi identity implies that
\[
	\tilde R(\hat X,U)\hat Y + \tilde R(U,\hat Y)\hat X = \tilde R(\hat X,\hat Y)U,
\]
and therefore identity \eqref{eq:covariant-comm-id} follows from the fact that $R(\hat X, \hat Y)U = 0$ since, for any vector field $Z \in \mathfrak X(M)$, we have that 
 $\tilde\nabla_{\hat Z} U = 0$. 
\end{proof}

We are now ready to prove Proposition \ref{prop:class-proj-span}.

\begin{proof}[Proof of Proposition \ref{prop:class-proj-span}]
A simple consequence of Lemma \ref{lem:lifting-isomorph} that for any collection of vector fields $\{X_k\}_{k=0}^r$ on $M$ we have the following identification 
\[
	\mathrm{Lie}(\tilde{X}_0;\tilde{X}_1,\ldots,\tilde{X}_r) = \{\tilde{X}\,:\, X \in \mathrm{Lie}(X_0;X_1,\ldots,X_r)\}.
\]
Therefore the parabolic H\"ormander condition for $\{\tilde{X}_k\}_{k=0}^r$ is equivalent to 
\begin{equation}\label{eq:lifing-para-equiv}
	\left\{(X(x),V_{M_X(x)}(x,v)) \,:\, X \in \mathrm{Lie}(X_0;X_1,\ldots,X_r)\right\} = T_x M \oplus T_v \S_x M.
\end{equation}
Clearly if the above condition is satisfied then $\{X_k\}_{k=0}^r$ satisfies the parabolic H\"ormander condition and \ref{eq:proj-trans} holds. The converse follows from the fact that \eqref{eq:proj-trans} implies that for each $X\in \mathrm{Lie}(X_0;X_1,\ldots,X_r)$, $(x,v)\in \S M$ and $h\in T_v\S_xM$ there exists a $Y\in \mathrm{Lie}(X_0;X_1,\ldots,X_r)$ with $Y(x) = 0$ such that
\[
	 V_{M_Y(x)}(x,v) = h - V_{M_X(x)}(x,v)
\]
and therefore $Z= X+ Y$ satisfies
\[
	\widetilde{Z} = (X(x),h).
\]
Combining this with the fact that
\[
	\left\{X(x) \,:\, X \in \mathrm{Lie}(X_0;X_1,\ldots,X_r)\right\} = T_x M,
\]
concludes the proof.
\end{proof}

\subsection{Furstenberg-Khasminskii formula}

The following identity is useful relating the divergence of a projective lift $\tilde{X}$ to that of $X$; it is well-known in the RDS community and the proof is simple, but as we could not locate a reference with the precise form we need, we provide a short proof here for the readers' convenience.
\begin{lemma}
Let $X\in \mathfrak{X}(M)$. Then the following identity holds
\begin{equation}\label{eq:Div-identity}
	\Div \tilde{X}(x,v) = 2\Div X(x) - n \langle v , \nabla X(x)v\rangle_x.
\end{equation}
\end{lemma}

Similarly, as we could not locate a statement that exactly matched the one we require (particularly the form of $\tilde{Q}$ stated below), we include a proof of the Furstenberg-Khasminskii formula for \eqref{eq:general-SDE} (see \cite{Baxendale1992-xe} for more details).

\begin{proposition}[Furstenberg-Khasminskii] \label{lem:FK}
Define for each $x\in M$
\[
	Q(x) := \Div X_0(x) + \frac{1}{2}\sum_{k=1}^rX_k\Div X_k(x),
\]
and each $w\in \S M$
\[
	\tilde{Q}(w) := \Div \tilde{X}_0 (w) + \frac{1}{2}\sum_{k=1}^r\tilde{X}_k\Div \tilde{X}_k(w).
\]
Suppose that $(w_t)$ has a unique stationary probability measure $\nu$ on $\S M$ that projects to $\mu$ on $M$, and that $Q\in L^1(\mu)$ and $\tilde{Q}\in L^1(\nu)$, then the following formulas hold
\begin{equation}\label{eq:sum-lyap-form}
	\lambda_\Sigma = \int_{M}Q\,\dee \mu,
\end{equation}
\begin{equation}\label{eq:Furstenberg-Khasminskii}
	n\lambda_1 - 2\lambda_\Sigma  =  - \int_{\S M} \tilde{Q}\,\dee\nu.
\end{equation}
\end{proposition}
\begin{proof}
We note that \eqref{eq:sum-lyap-form} is standard and can be found in a number of references (see for instance \cite{Baxendale1992-xe}).
To prove \eqref{eq:Furstenberg-Khasminskii}, we see that a straight forward computation and formula \eqref{eq:Div-identity} yields
\[
\begin{aligned}
	\dee \log(|D\Phi^t(x)v|) &= \langle v_t, \nabla X_0(x_t) v_t\rangle \dt + \sum_{k=1}^r \langle v_t, \nabla X_k(x_t) v_t \rangle \circ \dee W_t^k\\
	&= \frac{1}{n}\left(2\Div X_0(x_t) - \Div\tilde{X}_0(w_t)\right)\dt + \frac{1}{n}\sum_{k=1}^r \left(2\Div X_k(x_t) - \Div\tilde{X}_k(w_t)\right) \circ \dee W_t^k.
\end{aligned}
\]
Converting to It\^{o} and integrating in time gives
\[
\begin{aligned}
	\frac{1}{t}\log(|D\Phi^t(x)v|) 
	&= \frac{1}{nt} \int_0^t 2Q(x_s)\ds  - \frac{1}{nt} \int_0^t \tilde{Q}(w_s)\ds + \frac{1}{t}M_t,
\end{aligned}
\]
with $M_t$ is a mean-zero martingale arising from the It\^{o} integral whose exact form is not important.

We now take $t \to \infty$, using a corollary of, e.g., Theorem III.1.2 in \cite{kifer2012ergodic} to ensure the LHS converges to $\lambda_1$, while 
the first term on the RHS converges to $\int \tilde{Q} \,\dee \nu$ by the ergodic theorem. In particular, 
$\frac{1}{t} M_t$ must also converge, both pointwise and in $L^1(\P \times \nu)$, hence
$\frac{1}{t} M_t \to 0$ by the martingale law of large numbers. 
\end{proof}

\phantomsection
\addcontentsline{toc}{section}{References}
\bibliographystyle{abbrv}
\bibliography{bibliography}

\end{document}